\def\tofdd{\stackrel{\rm f.d.d.}{\Longrightarrow}}
\def\tofdu{\tofdd}
    \def\qed{\hfill$\sqcap\kern-8.0pt\hbox{$\sqcup$}$\\}
    \def\beq{\begin{eqnarray}}
    \def\eeq{\end{eqnarray}}
    \def\beqq{\begin{eqnarray*}}
    \def\eeqq{\end{eqnarray*}}
    \def\re{\textnormal {Re}}
    \def\im{\textnormal {Im}}
    \def\p{{\mathbb P}}
    \def\e{{\mathbb E}}
    \def\r{{\mathbb R}}
    \def\d{{\textnormal d}}
    \def\i{{\textnormal i}}
    \def\vv{{\textnormal v}}
    \newcommand{\erfc}{{\rm erfc}}
    \def\wt#1{\widetilde{#1}}
\newtheorem{theorem}{Theorem}[section]
\newtheorem{lemma}[theorem]{Lemma}
\theoremstyle{definition}
\newtheorem{remark}[theorem]{Remark}
\def\topp#1{^{(#1)}}
\newcommand{\C}{  \mathsf c }
\newcommand{\A}{  \mathsf a}
\numberwithin{equation}{section}
 \newcommand{\arxiv}[1]{
\setcounter{oldeq}{\value{equation}}
 \addtocounter{usesofarxiv}{1}
 \setcounter{equation}{0}
\def\theoldeq{\theequation}
\def\theequation{x-\arabic{usesofarxiv}.\arabic{equation}}
\def\theequation{\arabic{section}.\arabic{usesofarxiv}.\arabic{equation}}
\def\theequation{\thesection.\arabic{usesofarxiv}.\arabic{equation}}
  \colorlet{shadecolor}{gray!10}
{\footnotesize
\begin{shaded}#1
\end{shaded}
   \setcounter{equation}{\value{oldeq}}
\numberwithin{equation}{section}
}\color{black}}
\newcommand{\la}{\lambda}
\newcommand{\eps}{\varepsilon}
   \newcommand{\comment}[1]{}
\def\vv#1{{\boldsymbol #1}}
\newcommand{\R}{\r}
\def\topp#1{^{(#1)}}
     \newcommand{\Kab}{\mathfrak{K}_{\A,\C}\topp \tau}
\newcounter{oldeq}
\newcounter{usesofarxiv}
\author{W{\l}odek Bryc}
\address
{
W{\l}odzimierz Bryc\\
Department of Mathematical Sciences\\
University of Cincinnati\\
2815 Commons Way\\
Cincinnati, OH, 45221-0025, USA.
}
\email{wlodek.bryc@gmail.com}
\author{Alexey Kuznetsov}
\address
{
Alexey Kuznetsov\\
Department of Mathematics and Statistics\\
York University, 4700 Keele Street
\\ Toronto, Ontario, M3J 1P3, Canada
}
\email{akuznets@yorku.ca}
\title[Markov limits of steady states  of the KPZ equation]{Markov   limits of steady states  of the KPZ equation on an interval }
\keywords{KPZ fixed point;KPZ steady state;Markov representations}
\subjclass[2020]{60J35;60K40;82C24}
\begin{document}

  \begin{abstract} This paper builds upon the research of Corwin and Knizel  \cite{CorwinKnizel2021} who proved the existence of stationary measures for the KPZ equation on an interval and characterized them through a Laplace transform formula. Ref. \cite{Bryc-Kuznetsov-Wang-Wesolowski-2021}  found a probabilistic description of the stationary measures in terms of a Doob transform of some Markov kernels; essentially at the same time, another description connecting the stationary measures to the exponential functionals of the Brownian motion appeared in  \cite{barraquand2022steady}.

   Our first main result  clarifies and proves the equivalence of the two probabilistic description of these stationary measures.
    We then use the Markovian description to give rigorous proofs of some of the results claimed in \cite{barraquand2022steady}. We analyze how the stationary measures of the KPZ equation on $[0,\tau]$ behave at large scale, as $\tau$ goes to infinity.
   We also analyze the behaviour of the stationary measures of the KPZ equation on $[0,\tau]$ without rescaling, when $\tau$ goes to infinity. %
   Finally, we analyze the measures on $[0,\infty)$ at large scale, which according to \cite{barraquand2022steady} should correspond to stationary measures of  a hypothetical KPZ fixed point on $[0,\infty)$.

   \end{abstract}
      \maketitle

\arxiv{This is an expanded version of the paper with additional material.}

   \section{Notation and background}

   The Kardar-Parisi-Zhang (KPZ)
equation was proposed in \cite{kardar1986dynamic} as a model   for the evolution of the profile of a growing interface  driven by the space-time white noise $\zeta$. The interface profile  is
described by a height   function $H(t,s)$, where $t\geq 0$ is a time variable and $s$ is a spatial variable, which in one spatial dimension
 formally satisfies
\begin{equation}
  \label{KPZ}  \partial_t H(t,s)= \tfrac12 \partial_s^2 H(t,s)+\tfrac12\left(\partial_s H(t,s)\right)^2+\zeta(t,s).
\end{equation}
We refer the reader to the reviews \cite{Corwin2012,quastel2011introduction} about the KPZ equation and its universality class.

 We are interested in \eqref{KPZ} on a
 finite interval,  $s\in[0,1]$, with Neumann boundary conditions
 \begin{equation}
    \label{Nbdry} \partial_s H(t,0)=\C/2,\;  \partial_s H(t,1)=-\A/2.
 \end{equation}
Since the KPZ equation is ill-posed,  the standard notion of the solution is in the Cole-Hopf sense, i.~e. that
 they are defined as $H(t,s)=\log Z(t,s)$, where $Z(t,s)$
 solves the stochastic heat equation
  \begin{equation}
    \label{eq:SHE}
    \partial_t Z(t,s)=\tfrac12 \partial_s^2 Z(t,s)+Z(t,s)\zeta(t,s),\; \; t\geq 0,\; s\in[0,1]
  \end{equation}
  with   boundary conditions \eqref{Nbdry}
  \begin{equation}
    \label{Rbdry}
    \partial_s Z(t,s)|_{s=0}=(\C-1)Z(t,0)/2,\quad  \partial_s Z(t,s)|_{s=1}=(1-\A)Z(t,1)/2.
  \end{equation}
The solutions of \eqref{eq:SHE} are not differentiable, so the meaning of
  \eqref{Rbdry} is not obvious.  A proper definition   was given in \cite[Definition 2.5]{corwin2018open}, see also  \cite{gerencser2019singular,gonccalves2020derivation}.

A stationary measure for the open KPZ equation is the law of a random function    $(\widetilde H_s)_{s\in[0,1]}$
with $\widetilde H_0=0$, defined by the property that if $H(s,t)$ is a Cole-Hopf solution of equation \eqref{KPZ} with boundary values \eqref{Nbdry} and with the initial condition
$H(0,s) = \widetilde H_s$ for  $s \in  [0,1]$, then the law of $(H(t,s)-H(t,0))_{s\in[0,1]}$ does not depend on $t$.
  In a breakthrough paper \cite{CorwinKnizel2021} Corwin and Knizel
    proved existence of such stationary measures  and  determined their multivariate Laplace transform under the restriction $\A+\C\geq 0$.
   Papers \cite{Bryc-Kuznetsov-Wang-Wesolowski-2021} and \cite{barraquand2022steady} inverted this Laplace transform.  Two  %
   different
   representations of the stationary measure obtained in these papers are the starting point of this paper.

     For ease of comparison   we restate   results in \cite{Bryc-Kuznetsov-Wang-Wesolowski-2021} and \cite{barraquand2022steady} in  common parametrization.  To this end, as in
     \cite{Bryc-Kuznetsov-Wang-Wesolowski-2021}
      we replace  two real parameters $u,v$ from \cite{barraquand2022steady,CorwinKnizel2021} by  $\A=2v,\C=2u$. (Boundary parameters in \cite{parekh2019kpz} are thus $A=\C/2$, $B=\A/2$.)  Both papers  find it convenient to replace interval $[0,1]$ with  $[0,\tau]$, $\tau>0$. In our re-write, the length of the interval $\tau$  corresponds to $L$ in \cite{barraquand2022steady}, and it is four times longer than the  length parameter   in \cite{Bryc-Kuznetsov-Wang-Wesolowski-2021}. Since time variable plays no role in this paper, and we are interested in Markov processes in the space variable, following  \cite{Bryc-Kuznetsov-Wang-Wesolowski-2021} we will use variable $t$  as the spatial index of the random fields on $[0,\tau]$.   This index was denoted by $x$ in \cite{barraquand2022steady} and by $X$ in \cite{CorwinKnizel2021}.

  In this notation, Barraquand and Le Doussal \cite{barraquand2022steady} represent the stationary measure of the KPZ equation on an interval as
  \begin{equation}
    \label{X2H} \left(\widetilde H_t\right)_{t\in[0,\tau]}\stackrel{d}{=}\left(B_t+X_t\right)_{t\in[0,\tau]},
  \end{equation}
     where $(B_t)$ is a Brownian motion of variance rate $1/2$  and
  $(X_t)_{t\in[0,\tau]}$   is an independent  stochastic process  with continuous trajectories such that the Radon-Nikodym derivative of its law $ \p_X$ on $C[0,\tau]$ with respect to the law $\p_B$ of Brownian motion $(B_t)_{t\in[0,\tau]}$ with variance rate $1/2$ is
  \begin{equation}\label{eq:Bar-LeD}
   \frac {\d \p_X}{\d \p_B}
 =\frac{1}{ \Kab} e^{-\A \beta_\tau} \left(\int_0^\tau e^{-2 \beta_t}\d t\right)^{-\A/2-\C/2},
 \end{equation} %
where we denoted by $\beta=(\beta_t)\in C[0,\tau]$ the argument of the density function.
Process   $(X_t)_{t\in[0,\tau]}$  depends on parameters $\A,\C,\tau$ but  for now we have suppressed this dependence in our notation.

 Representation \eqref{X2H} is established  in \cite{barraquand2022steady} using  the Laplace transform formula of Corwin and Knizel \cite{CorwinKnizel2021}. The argument is given  for $\A,\C>0$ and $\tau=1$, but it is conjectured that it represents the stationary measure for the KPZ equation on any interval $\tau>0$ with  Neumann boundary  conditions for any real $\A,\C$.

Representation \eqref{X2H} is  also established in Ref. \cite{Bryc-Kuznetsov-Wang-Wesolowski-2021} but process $X$ is described differently and only for $\A+\C>0$. The argument there also relies   on  the Laplace transform formula of Corwin and Knizel \cite{CorwinKnizel2021}  with $\tau=1$, with proof that covers also some negative  values for the  parameters, as long as $\A+\C>0$ and $\min\{\A,\C\}>-2$. The stationary measure of the KPZ equation on an interval is represented there in the form  which,
 after a change of the time scale and sign, compare \cite[(1.8)]{Bryc-Kuznetsov-Wang-Wesolowski-2021}, in present notation is %
  \begin{equation}
    \label{Y2H}\left(\widetilde H_t\right)_{t\in[0,\tau]}\stackrel{d}{=}\left(B_t+Y_t-Y_0\right)_{t\in[0,\tau]},
  \end{equation}
     where $(B_t)$ is an independent Brownian motion of variance 1/2  and
 $(Y_t)_{t\in[0,\tau]} =
   \left(Y_{t}\topp{\A,\C }\right)_{t\in[0,\tau]}$    is an $\r$-valued Markov process   with transition probabilities
   \begin{equation}\label{Y:trans}\p(Y_t\topp{\A,\C}=\d y|Y_s\topp{\A,\C}=x)= %
   \frac{H_t(y)}{H_s(x)}p_{t-s}(x,y)\d y,\; 0\leq s<t\leq \tau
   \end{equation}
 and with initial distribution
\begin{equation}\label{Y0}
 \p(Y_0\topp{\A,\C}=\d x)=\frac{1}{C_{\A,\C}\topp \tau} e^{-\C x}H_0(x)\d x,
\end{equation}
where %
  \begin{equation}\label{pt}
p_t(x,y)= \frac{2}{\pi} \int_0^{\infty} e^{-t u^2/4} K_{\i u}(e^{-x}) K_{\i u}(e^{-y})\frac{\d u}{|\Gamma(\i u)|^2},\quad x,y\in\r,\; t>0,
\end{equation}
is the Yakubovich heat kernel,
\begin{equation}
  \label{C(ac)} C_{\A,\C}\topp \tau:= \int_{\r^2}e^{-\A x-\C y} p_\tau(x,y)\d x  \d y
\end{equation}
is the normalizing constant, and
\begin{equation}
  \label{Ht} H_t(x)=\int_\r e^{-\A y} p_{\tau-t}(x,y)\d y, \quad 0\leq t<\tau \;,
\end{equation}
(with $H_\tau(x):=e^{-\A x}$) is Doob's $h$-transform. In \eqref{pt}, $K_{\i u}(e^{-x})$ is the modified Bessel K function \eqref{BesselK} with positive argument $e^{-x}$ and imaginary index $\i u$.

 Markov process $(Y_t\topp{\A,\C})_{t\in [0,\tau]}$  is well defined for all $\A+\C>0$ and  it is  expected that \eqref{Y2X}  gives stationary solution \eqref{X2H} for all $\tau>0$ and all $\A+\C>0$, so the assumptions $\tau=1$ and $\min\{\A,\C\}>-2$ in \cite[Proposition 1.4]{Bryc-Kuznetsov-Wang-Wesolowski-2021} should not be needed.

\begin{remark}\label{remark_Doob_1}
Markov process $(Y_t\topp{\A,\C})_{t\in [0,\tau]}$ can be described as follows.   Start with a Brownian motion of variance rate $1/2$ and drift $-\A t/2$ and kill it at rate  $\frac{1}{4} e^{-2x}$. Next, condition this killed process to survive until time $\tau$, and we would obtain a Markov process with the same transition probabilities as $(Y_t\topp{\A,\C})_{t\in [0,\tau]}$. One can check that this description is correct by applying Girsanov's theorem and Doob's $h$-transform, see
\cite{Rogers_Williams:1987,Pinsky:1985}.
  We also note that the review  paper \cite[page 7]{Corwin2022} explains  the construction of process $\left(Y_t^{(\A,\C)}\right)_{t\in [0,\tau]}$ emphasizing nice symmetry between parameters $\A$ and $\C$.
\end{remark}

\arxiv{
The above probabilistic description of the process $(Y_t\topp{\A,\C})$ can be justified as follows. The process
$(X^{(1)}_t)$ that has transition probability density $p_t(x,y)$ can be identified as a Brownian motion of rate $1/2$ that is killed at rate $\frac{1}{4} e^{-2x}$ (see Section 3.2 in \cite{Bryc-Kuznetsov-Wang-Wesolowski-2021}). This process can also be described by its Markov generator
 $$
 {\mathcal L}_{X^{(1)}} f(x) = \frac{1}{4} f''(x) - \frac{1}{4} e^{-2x}.
 $$
Fix $\A\in\r$ and consider  the process $(X^{(2)}_t)$, which has transition probability density
  \begin{equation}\label{pt2}
  p^{(2)}_t(x,y)= e^{-\A(y-x) - \frac{\A^2}{4} t} p_t(x,y).
 \end{equation}
 Girsanov's theorem tells us that this modification of the transition density adds a drift of $-\A t/2$ to the underlying Brownian motion. In other words, the process $(X^{(2)}_t)$
 is a Brownian motion of variance rate $1/2$ with drift $-\A t/2$ that is killed at rate $\frac{1}{4} e^{-2x}$.  The Markov generator of the process $(X^{(2)}_t)$ is
 $$
{\mathcal L}_{X^{(2)}} f(x) = \frac{1}{4} f''(x) - \frac{\A}{2}  f'(x) - \frac{1}{4} e^{-2x}.
 $$
 This latter result can also be checked by direct calculation:
 we know that $p_t(x,y)$ satisfies backward Kolmogorov equation
 $$
 \partial_t p=\frac{1}{4} \partial_x^2 p - \frac{1}{4} e^{-2x} p,
 $$
and from here one can deduce (using \eqref{pt2}) that $p^{(2)}_t(x,y)$ satisfies
$$
\partial_t  p^{(2)}=\frac{1}{4} \partial_x^2  p^{(2)} - \frac{\A}{2} \partial_x  p^{(2)} - \frac{1}{4} e^{-2x}  p^{(2)}.
$$
Next, for $0<s<\tau$ we define
$$
H^{(2)}_s(x)=\int_{\R}  p^{(2)}_{\tau-s}(x,y) d y.
$$
It is clear that $H^{(2)}_s(x)$ is the probability that process  $(X^{(2)}_t)$, started from point $x$ at time $s$, survives  up to time  $\tau$. Then the transition probability density of process  $(X^{(2)}_t)$ conditioned to survive up to time $\tau$ is given by
$$
\frac{H^{(2)}_{t}(y)}{H^{(2)}_{s}(x)} p^{(2)}_{t-s}(x,y), \quad 0 < s<t < \tau.$$
Using  \eqref{Ht} and \eqref{pt2} we obtain
$
H_t(x)=e^{-\A x + \frac{\A^2}{4}(\tau-t)} H^{(2)}_{t}(x),
$
and this equation and some simple algebra allow us to conclude that
$$
\frac{H^{(2)}_{t}(y)}{H^{(2)}_{s}(x)}  p^{(2)}_{t-s}(x,y)
=\frac{H_t(y)}{H_s(x)}p_{t-s}(x,y), \quad 0< s<t < \tau.
$$
We thus obtain the transition probabilities \eqref{Y:trans} and we conclude that the process $(Y_t^{(\A,\C)})$ has the transition probability density as the process $(X^{(2)}_t)$ conditioned to survive up to time $\tau$.
}

Formulas \eqref{X2H} and \eqref{Y2H} indicate that the two representations are equivalent, and that process $X$, which is defined for all real  $\A,\C$,  can be represented     as the process of Markov differences \begin{equation}
  \label{Y2X} (X_t)_{t\in[0,\tau]}\stackrel{d}{=}(Y_t\topp{\A,\C}-Y_0\topp{\A,\C})_{t\in[0,\tau]},
\end{equation}
 for $\A+\C>0$ .
 This fact can be read out from  \cite[formula (16)]{barraquand2022steady} when  $\A,\C>0$. We will give an argument that works for the parameters in the entire admissible range $\A+\C>0$   for process $(Y_t\topp{\A,\C})$.
\begin{theorem}
  \label{Thm:Y2K}  For   $\A+\C>0$,
  process $\left(Y_t\topp{\A,\C}-Y_0\topp{\A,\C}\right)_{t\in [0,\tau]}$ has the same law as  process $(X_t)_{t\in [0,\tau]}$ defined by \eqref{eq:Bar-LeD}.
\end{theorem}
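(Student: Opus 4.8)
\emph{Plan of proof.} The idea is to identify the two path measures on $C[0,\tau]$ directly, without returning to the KPZ equation or to the Corwin--Knizel Laplace transform. The one structural input we use is the probabilistic meaning of the Yakubovich kernel \eqref{pt}: it is the Feynman--Kac kernel of Brownian motion of variance $1/2$ killed at rate $\tfrac14 e^{-2\,\cdot}$. Concretely, writing $\e_x$ for expectation when the variance-$1/2$ Brownian motion $(B_t)$ starts at $B_0=x$, one has for every bounded measurable $f$ and every $t>0$
\[
 \int_\r p_t(x,y)\,f(y)\,\d y\;=\;\e_x\!\Big[f(B_t)\,\exp\Big(-\tfrac14\int_0^t e^{-2B_s}\,\d s\Big)\Big].
\]
(That $p_t$ has generator $\tfrac14(\partial_x^2-e^{-2x})$ is read off from \eqref{pt} and the modified Bessel equation; this Feynman--Kac description is the way $p_t$ and the process $(Y_t\topp{\A,\C})$ enter \cite{Bryc-Kuznetsov-Wang-Wesolowski-2021}.) Equivalently, by \eqref{Ht} and the semigroup property, $N_t:=H_t(B_t)\exp(-\tfrac14\int_0^t e^{-2B_s}\,\d s)$, $0\le t\le\tau$, is a $\p_B$-martingale with $N_0=H_0(B_0)$ and $N_\tau=e^{-\A B_\tau}\exp(-\tfrac14\int_0^\tau e^{-2B_s}\,\d s)$.

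First I would rewrite the law of $(Y_t\topp{\A,\C})_{t\in[0,\tau]}$ as a change of measure from Brownian motion. For a partition $0=t_0<t_1<\dots<t_n=\tau$, multiplying the transition densities \eqref{Y:trans} telescopes the ratios $H_{t_i}/H_{t_{i-1}}$ to $H_\tau(y_n)/H_0(y_0)=e^{-\A y_n}/H_0(y_0)$, and combined with the initial law \eqref{Y0} the weight $h_0=H_0$ exactly cancels this $1/H_0(y_0)$. Applying the Feynman--Kac identity above to the remaining product of $p$-kernels, and then extending from cylinder functionals to all bounded measurable $F$ on $C[0,\tau]$ (cylinder sets generate the Borel $\sigma$-field, and taking $F\equiv 1$ below shows that both sides have total mass $1$, which also reproduces \eqref{C(ac)}), one obtains
\[
 \e\big[F\big((Y_t\topp{\A,\C})_{t\in[0,\tau]}\big)\big]
 \;=\;\frac{1}{C_{\A,\C}\topp\tau}\int_\r e^{-\C x}\,
 \e_x\!\Big[F(B)\,e^{-\A B_\tau}\exp\Big(-\tfrac14\int_0^\tau e^{-2B_s}\,\d s\Big)\Big]\,\d x .
\]

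Next I would specialize to increments. Taking $F(\beta)=G(\beta-\beta_0)$ and using translation invariance of $(B_t)$ --- write $B=x+\widetilde B$ with $\widetilde B$ a variance-$1/2$ Brownian motion started at $0$, so that $B-B_0=\widetilde B$, $e^{-\A B_\tau}=e^{-\A x}e^{-\A\widetilde B_\tau}$, and $\int_0^\tau e^{-2B_s}\,\d s=e^{-2x}\mathcal A_\tau$ with $\mathcal A_\tau:=\int_0^\tau e^{-2\widetilde B_s}\,\d s$ --- Fubini's theorem (the integrand is nonnegative for $G\ge 0$) together with the elementary identity
\[
 \int_\r e^{-(\A+\C)x}\,e^{-\frac14 e^{-2x}\mathcal A_\tau}\,\d x
 \;=\;\tfrac12\,\Gamma\!\Big(\tfrac{\A+\C}{2}\Big)\Big(\tfrac14\mathcal A_\tau\Big)^{-(\A+\C)/2},
\]
valid precisely because $\A+\C>0$, yields
\[
 \e\big[G\big(Y\topp{\A,\C}-Y_0\topp{\A,\C}\big)\big]
 \;=\;\frac{4^{(\A+\C)/2}\,\Gamma((\A+\C)/2)}{2\,C_{\A,\C}\topp\tau}\;
 \e_0\!\Big[G(\widetilde B)\,e^{-\A\widetilde B_\tau}\Big(\int_0^\tau e^{-2\widetilde B_s}\,\d s\Big)^{-\A/2-\C/2}\Big].
\]
By \eqref{eq:Bar-LeD} the right-hand side is a constant multiple of the expectation of $G$ under $\p_X$. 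Since the law of $Y\topp{\A,\C}-Y_0\topp{\A,\C}$ and $\p_X$ are both probability measures on $C[0,\tau]$ and we have shown they are proportional, setting $G\equiv 1$ forces the constant to equal $1$ --- equivalently $\Kab=2\,C_{\A,\C}\topp\tau\big/\big(4^{(\A+\C)/2}\Gamma((\A+\C)/2)\big)$ --- so the two laws coincide, which is the claim.

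The only genuine obstacle is the first step: establishing that \eqref{pt} is the Feynman--Kac kernel of killed Brownian motion, and, with it, the integrability that justifies the Feynman--Kac representation, the telescoping together with the cancellation of $h_0$, and the passage from cylinder functionals to path functionals. All of this comes with the construction of the Markov process $(Y_t\topp{\A,\C})$ for $\A+\C>0$ in \cite{Bryc-Kuznetsov-Wang-Wesolowski-2021}. Once that is in place, the remaining ingredients --- translation invariance of Brownian motion and the Gamma integral --- are elementary, and it is exactly the convergence of that Gamma integral that confines the argument to the range $\A+\C>0$.
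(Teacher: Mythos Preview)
Your argument is correct, and its core step --- integrating out the starting point of $Y$ via the Gamma integral $\int_\r e^{-(\A+\C)x}\exp(-\tfrac14 e^{-2x}\mathcal A_\tau)\,\d x$ --- is exactly the mechanism that drives the paper's proof as well. The route, however, is different. The paper does not work at the path level; instead it fixes a partition $0=t_0<\dots<t_n=\tau$, introduces the two-dimensional diffusion $(X_t,I_t)$ with $I_t=\int_0^t e^{-2X_s}\,\d s$, and uses the explicit Matsumoto--Yor formula for its transition density in terms of the Hartman--Watson function $\theta(\cdot,\cdot)$. It then rewrites the Yakubovich kernel $p_t(x,y)$ itself as an integral against $\theta$ (formula \eqref{Yak-Hart-Wat}), so that both finite-dimensional densities become $n$-fold $\xi$-integrals of the same shape; the $y_0$-integration produces the Gamma factor and the two expressions match term by term.

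Your version bypasses the Hartman--Watson density entirely by using the Feynman--Kac identification of $p_t$ with Brownian motion of variance $1/2$ killed at rate $\tfrac14 e^{-2\,\cdot}$ (which the paper records later as \eqref{P_t-semi}). This lets you write the law of $Y$ as a single reweighting of $\p_B$ with a random starting point, after which translation invariance and the Gamma integral finish the job. What you gain is brevity and a transparent path-level statement; what the paper's approach gains is an explicit closed form for the finite-dimensional densities of both processes in terms of $\theta$, which is of independent interest. Note that your final formula for $\Kab$ differs from \eqref{K2C} by a power of~$2$; since both sides are probability measures, this does not affect the proof of the theorem itself, only the stated value of the constant.
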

The proof  is in Section \ref{Sec:proofY2K}.   In the proof, we   show that  the normalizing constants  in \eqref{eq:Bar-LeD} and   \eqref{C(ac)} are related as follows:
\begin{equation}
  \label{K2C}
  {\mathfrak K}_{\A,\C}^{(\tau)}=\tfrac{2^{1-\A-\C}}{\Gamma(\tfrac{\A+\C}{2})} { C}_{\A,\C}^{(\tau)}.
\end{equation}
To point out the difference between the two representations \eqref{X2H} and \eqref{Y2H}, consider the case $\A+\C=0$. By Cameron-Martin theorem, formula  \eqref{eq:Bar-LeD} implies that  process $(X_t)$ with $\A+\C=0$ has the same law as $(B_t+ \C t/2)$,  so  representation \eqref{X2H}    is  in agreement with  \cite[Theorem 1.2(3)]{CorwinKnizel2021}.
From Theorem \ref{Thm:Y2K}, it follows that if $\eps=\A_\eps+\C_\eps\searrow 0$ while $\C_\eps\to \C$ then  %
$$(Y_t\topp{\A_\eps,\C_\eps}-Y_0\topp{\A_\eps,\C_\eps})_{t\in [0,\tau]} \Rightarrow (B_t+\C t/2)_{t\in[0,\tau]} \mbox{ as } \eps\to 0.$$
However, if $\C\ne 0$ then Markov process
$(Y_t\topp{\A_\eps,\C_\eps})_{t\in[0,\tau]}$ cannot converge because
\begin{equation}
   \label{a+b=0:1}
    \eps Y_{0}\topp{\A_\eps,\C_\eps } \Rightarrow   \gamma_1 \mbox{ as ${\eps\searrow 0}$},
\end{equation}
where $\gamma_1$ is an exponential random variable of mean 1. So   representation \eqref{Y2H} does not extend ``by continuity'' to this case.
(Verification  of \eqref{a+b=0:1}  appears in Section \ref{Sec:P-a+b=0}.)

The goal of this paper is to investigate which of the limits discovered in \cite{barraquand2022steady} for process $X$ extend to process $Y$. When this happens, we can describe the limits   in \cite{barraquand2022steady}  as differences of  explicit Markov processes.
To identify a limiting Markov process, we only need  to prove convergence of finite dimensional distributions, which follows from convergence of the initial laws and transition probabilities,  which is a simpler task than  deducing weak convergence  from  the convergence of the Feller semigroups as in \cite[ Ch. 4, Thm. 2.5]{ethier2009markov}, and seems to be also easier than the approach based on Radon-Nikodym derivatives as in \cite{barraquand2022steady,hariya2004limiting}.
 \subsection*{Notation} Throughout  the paper, $(B_t)$ denotes Brownian motion of variance $1/2$, i.e., process $(B_t)_{t\geq 0}$ has the same law as process  $(W_{t/2})_{t\geq 0}$ or process $(\tfrac{1}{\sqrt{2}}W_t)_{t\geq 0}$, where $(W_t)_{t\geq 0}$ is the standard Wiener process. By $\gamma_p$ we denote a Gamma random variable with shape parameter $p>0$ and scale parameter one, i.e.
 $$
 {\mathbb P}(\gamma_p \in \d x)=\frac{1}{\Gamma(p)} x^{p-1}e^{-x}{\bf 1}_{\{x>0\}} \d x.$$
 In particular, $\gamma_1$ is the standard exponential random variable of mean 1. By $\Rightarrow$ we denote weak convergence (of the corresponding measures on $\r$ or on $C[0,1]$) and by
 $\tofdd$ we denote the convergence of finite dimensional distributions. For argument  $x>0$ and complex  index $z$ we denote by
\begin{equation}
  \label{BesselK}
  K_z(x)=\int_0^\infty e^{-x \cosh w}\cosh(z w)\d w
\end{equation}
 the modified Bessel K function.

\section{Asymptotic results} We first describe  the limits derived in \cite{barraquand2022steady} for process $X$ and then in Section \ref{Sec:Lims} we discuss the corresponding limits for Markov process $Y$.
 In view of Theorem \ref{Thm:Y2K}, limits for process $Y$ give the corresponding limits for process $X$, but we shall see that the converse sometimes fails.

 To indicate how Barraquand and Le Doussal   scale the  parameters for their limits,  we need to write them explicitly, so in Section \ref{sec:descr} we shall write  $\left(X\topp{\A,\C}_t\right)_{t\in[0,\tau]}$  for the stochastic process with density \eqref{eq:Bar-LeD}.

 \subsection{Description of limits in \cite{barraquand2022steady}}\label{sec:descr}
 Barraquand and Le Doussal  \cite{barraquand2022steady} determine several interesting limits.
 Some of these limits are expected to correspond to the stationary measures of a hypothetical KPZ fixed points on an interval and on the half-line. This is the area of intense current research, and some new preprints appeared while this paper was undergoing a
 revision.   At this time however, we note that the  KPZ fixed point was defined rigorously in \cite{matetski2016kpz} only on $\r$.

 The following describes the limits determined in \cite{barraquand2022steady}.
 \begin{itemize}
       \item[(i)]\label{KPZ:fixed}
       It is claimed in \cite{barraquand2022steady} that the large scale limit $\lim_{\tau\to \infty} \left(\frac{1}{\sqrt{\tau}}X\topp{\A/\sqrt{\tau},\C/\sqrt{\tau}}_{t \tau}\right)_{t\in[0,1]} $ should give the {\em stationary measures of the hypothetical KPZ fixed point on an interval}.         The limit is described in  \cite{barraquand2022steady} by   density \eqref{B:eta:dens} and also by the heat kernel and the Laplace transform  \cite[supplement, formulas (51), (54)]{barraquand2022steady}.

       By universality, such limits should correspond to stationary measures of a hypothetical KPZ fixed point on an interval, and indeed a recent result in \cite{Bryc-Wang-Wesolowski-2022} derives the same limit from an open Asymmetric Simple Exclusion Process.

At this time, the notion of the KPZ fixed point on an interval has not yet been defined.

\vspace{0.25cm}
           \item[(ii)]\label{KPZ:halfline} It is conjectured in \cite{barraquand2022steady} that  {\em the  stationary solution of the half-line KPZ equation} is given by  $\lim_{\tau\to\infty}\left(X\topp{\A,\C}_t\right)_{t\in[0,\tau]}$, a limit which was already studied in \cite[Theorem 1.3]{hariya2004limiting} for different reasons.

        The  form of the limit depends on  the parameters  $\A,\C$ as follows:

           \vspace{0.15cm}

               \begin{itemize}
                 \item[(a)] \label{KPZ-halfline:mc} If $\A\geq0, \C\geq 0$ then the limit $(\widetilde Z_t\topp{\C})_{t\geq 0}$ does not depend on $\A$, and is expressed by    \eqref{KPZ:half:hd} with $\A=0$.
                   \item[(b)]
                   \label{KPZ-halfline:hd} If $\A\leq0, \C\geq \A$ then the limit $(\widetilde Z_t\topp{\A,\C})_{t\geq 0}$ is given by  \eqref{KPZ:half:hd}  %
                    (with $\A$ replaced by $-\A$)  and depends on both $\A, \C$.
                    \item[(c)]  \label{KPZ-halfline:ld} If $\A\geq \C, \C\leq 0$ then the limit is $(B_t+t \C/2)$   and does not depend on $\A$.
               \end{itemize}

               \vspace{0.25cm}

                According to a recent preprint \cite{Barraquand2022},
            these limits are indeed the stationary solutions for the KPZ equation on
            the half-line.

            \vspace{0.25cm}

            \item[(iii)] \label{KPZ:LS}
             It is claimed in \cite{barraquand2022steady} that the large scale limit  as $T\to \infty$  of  the stationary measures  on the half-line from point (ii) should give the stationary measures of the
             {\em KPZ fixed point on the half-line}. %
             These limits are:

        \vspace{0.15cm}
\begin{itemize}

   \item[(a)]  \label{KPZ:LS:mc} If $\A>0,\C>0$ then
    \begin{equation}
   \label{eq:BD2136}
   \frac{1}{\sqrt{T}}\left(\widetilde Z_{tT}\topp {\C/\sqrt{T}}\right)_{t\geq 0} \Rightarrow
   \left(B_t+\max\left\{0, -\tfrac{2}{\C}\gamma_{1}-2\min_{0\leq s\leq t}B_s\right\}\right)_{t\geq 0}  \mbox{ as $T\to\infty$},
 \end{equation}
 where  $\gamma_1$ is an %
  independent
  exponential random variable.

    \item[(b)]  \label{KPZ:LS:hd} If $\A<0$ and $\C>\A$, then
        \begin{multline}
          \label{eq:BD2137}
            \frac{1}{\sqrt{T}}\left(\widetilde Z_{tT}\topp {\A/\sqrt{T},\C/\sqrt{T}}\right)_{t\geq 0} \\
            \Rightarrow
   \left(B_t+\A t/2+\max\left\{0, -\tfrac{2}{\C-\A}\gamma_{1}-2\min_{0\leq s\leq t}\{B_s+\A s/2\}\right\}\right)_{t\geq 0}  \mbox{ as $T\to\infty$},
        \end{multline} %
     where  $\gamma_1$ is an independent exponential random variable.
        \item[(c)]  \label{KPZ:LS:ld} For $\A\geq \C, \C\leq 0$, process $(B_t+t \C/2)_{t\geq 0}$ is invariant under the scaling used in the limit so it should be the stationary measure of the hypothetical KPZ fixed point on the half-line.
\end{itemize}

      \vspace{0.25cm}

At this time, the notion of the KPZ fixed point on the half-line has not yet been defined.

   \vspace{0.25cm}

 \item[(iv)]\label{KPZ:EW}   %
 For completeness, we also mention that according to \cite{barraquand2022steady} , the small scale limit  $$\lim_{\tau\to 0} \left(\frac{1}{\sqrt{\tau}}X\topp{\A/\sqrt{\tau},\C/\sqrt{\tau}}_{t \tau}\right)_{t\in[0,1]} $$ is described in   \eqref{BD:lim:EW}. We learned from the review that this limit should correspond to the stationary measure of
 {\em the  Edwards-Wilkinson equation} that is the stochastic equation with additive noise
 $$
 \partial_t Z=\frac12\partial_{xx}Z+\zeta.
 $$
(Our techniques do not allow us to say anything about this limit.)
 \end{itemize}

 \medskip
We shall represent some of these limits in a different form using  the corresponding limits of the Markov process $\left(Y_{t}\topp{\A,\C }\right)_{t\in[0,\tau]}$.

\subsection{Limits of Markov process $Y$}\label{Sec:Lims}

\subsubsection{Markov process for the stationary measure  of the (hypothetical) KPZ fixed point on an interval}
The following result is a version of point (i) in Section \ref{sec:descr}.
 It is related to \cite{Bryc-Wang-Wesolowski-2022}, who determined that up to an irrelevant scaling factor, the fluctuations of particle density for ASEP with varying parameters for fixed $q$ are given by the same Markov process. We use the notation $(\widetilde \eta_t)$ and $(\eta_t)=(\widetilde \eta_t-\widetilde \eta_0)$ to connect with that paper. (Strictly speaking, the process denoted by $\eta$ in \cite{Bryc-Wang-Wesolowski-2022} is $\sqrt{2}\eta$ in our notation. But the actual process appearing in the limit  \cite[Theorem 1.5]{Bryc-Wang-Wesolowski-2022} is $\eta$.)
  \begin{theorem}%
  \label{Thm:KPZ:fixed}
  For fixed $\A+\C>0$ we have %
    \begin{equation*}\label{BD:lim}
  \left(\frac{1}{\sqrt{\tau}}Y_{t \tau}\topp {\A/\sqrt{\tau},\C/\sqrt{\tau}}\right)_{t\in[0,1]}\tofdu \left(\widetilde \eta_t\topp{\A,\C}\right)_{t\in[0,1]} \mbox{  as $\tau\to\infty$},
\end{equation*}
   where $(\widetilde \eta_t)_{t\in[0,1]}$ is a Markov process with the initial distribution
      \begin{equation}
  \label{eta0}
  \p(\widetilde \eta_0=\d x)=\frac{1}{\mathfrak C_{\A,\C}}e^{-\C x} h_0(x) {\mathbf 1}_{\{x>0\}}\d x.
\end{equation}
      and transition probabilities
  \begin{equation}\label{eta:trans}
  p_{s,t}(x,\d y)=\frac{h_t(y)}{h_s(x)}g_{t-s}(x,y)\d y, \quad 0\leq s<t\leq 1,
  \end{equation}
      where
\begin{equation}
  \label{eta:heat}g_t(x,y)=\frac{2}{\pi}\int_0^\infty e^{-t u^2/4}\sin(x u)\sin(y u)\d u = \frac{1}{\sqrt{\pi t}} \Big(e^{-\frac{(x-y)^2}{ t}}-e^{-\frac{(x+y)^2}{ t}} \Big),\; x,y,t>0,
\end{equation}
the normalizing constant is
\begin{equation}
  \label{C:norm:eta}
  \mathfrak C_{\A,\C}=\int_{\r_+^2} e^{-\C x - \A y}g_1(x,y)\d x\d y
\end{equation}
and  Doob's $h$-transform is
$$h_t(x)= \int_{\r_+}g_{1-t}(x,y)e^{-\A y} \d y, \;x>0, \; 0\leq t<1 $$
with $h_1(x):=e^{-\A x}{\mathbf 1}_{\{x>0\}}$. %
\end{theorem}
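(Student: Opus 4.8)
The plan is to prove convergence of finite-dimensional densities directly. Fix $0\le t_1<\dots<t_k\le1$, set $\A'=\A/\sqrt\tau$, $\C'=\C/\sqrt\tau$, and let $H^{(\tau)}_s$ be the function \eqref{Ht} built from these parameters. Substituting \eqref{Y:trans}--\eqref{Y0} into the joint density of $(Y^{(\A',\C')}_{t_1\tau},\dots,Y^{(\A',\C')}_{t_k\tau})$, the normalizing factor $H^{(\tau)}_0$ of the initial law cancels against the first transition and the chain $\prod_i H^{(\tau)}_{t_i\tau}(y_i)/H^{(\tau)}_{t_{i-1}\tau}(y_{i-1})$ telescopes, leaving
\[
\rho_\tau(y_1,\dots,y_k)=\frac{G^{(\tau)}_{t_1\tau}(y_1)\,H^{(\tau)}_{t_k\tau}(y_k)}{C^{(\tau)}_{\A',\C'}}\prod_{i=2}^{k}p_{(t_i-t_{i-1})\tau}(y_{i-1},y_i),
\]
where, by symmetry of $p$, $G^{(\tau)}_{s}(y):=\int_\r e^{-\C'x}p_{s}(y,x)\,\d x$ is the left-endpoint analogue of $H^{(\tau)}_{s}$, with the convention $G^{(\tau)}_{0}(y)=e^{-\C'y}$ covering the case $t_1=0$. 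The rescaled vector $\bigl(\tfrac1{\sqrt\tau}Y^{(\A',\C')}_{t_1\tau},\dots\bigr)$ then has density $\tau^{k/2}\rho_\tau(\sqrt\tau\,\xi_1,\dots,\sqrt\tau\,\xi_k)$, and I will distribute its $k$ factors of $\sqrt\tau$ as one per kernel $p_{(t_i-t_{i-1})\tau}$ (there are $k-1$) plus one for $1/C^{(\tau)}_{\A',\C'}$.

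The one analytic input is the diffusive scaling limit of the Yakubovich kernel: for $t>0$,
\[
\sqrt\tau\,p_{t\tau}\bigl(\sqrt\tau\,\xi,\sqrt\tau\,\eta\bigr)\longrightarrow g_t(\xi,\eta)\ \ (\xi,\eta>0)\quad\text{and}\quad\longrightarrow0\ \ (\eta\le0)\qquad\text{as }\tau\to\infty,
\]
with $g_t$ as in \eqref{eta:heat}. I would obtain this from the small-argument expansion $I_{\pm\i u}(z)=(z/2)^{\pm\i u}/\Gamma(1\pm\i u)+O(z^{2})$, uniform in $u>0$ for $z\le1$, which together with $|\Gamma(\i u)|^{2}=\pi/(u\sinh\pi u)$ yields, for $x,y\ge0$,
\[
\frac{K_{\i u}(e^{-x})K_{\i u}(e^{-y})}{|\Gamma(\i u)|^{2}}=\cos\!\bigl(u(x{+}\log2)+\arg\Gamma(\i u)\bigr)\cos\!\bigl(u(y{+}\log2)+\arg\Gamma(\i u)\bigr)+O\!\bigl(e^{-2x}+e^{-2y}\bigr).
\]
Plugging this into \eqref{pt}, substituting $u=v/\sqrt\tau$, and using $\arg\Gamma(\i v/\sqrt\tau)\to-\tfrac\pi2$ with $\cos(\theta-\tfrac\pi2)=\sin\theta$, dominated convergence against $\tfrac2\pi e^{-tv^{2}/4}$ gives $\sqrt\tau\,p_{t\tau}(\sqrt\tau\,\xi,\sqrt\tau\,\eta)\to\tfrac2\pi\int_0^{\infty}e^{-tv^{2}/4}\sin(v\xi)\sin(v\eta)\,\d v=g_t(\xi,\eta)$, the remainder contributing $O(e^{-2\sqrt\tau\min(\xi,\eta)})$. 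For $\eta\le0$ the crude bound $|K_{\i u}(w)|\le K_0(w)\le\sqrt{\pi/2}\,e^{-w}$ ($w\ge1$) makes $p_{t\tau}(\sqrt\tau\,\xi,\sqrt\tau\,\eta)$ doubly exponentially small.

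Granting the scaling limit, the remaining factors are handled by dominated convergence after the substitution $z=\sqrt\tau\,\zeta$: $H^{(\tau)}_{t\tau}(\sqrt\tau\,\eta)=\int_\r e^{-\A\zeta}[\sqrt\tau\,p_{(1-t)\tau}(\sqrt\tau\,\eta,\sqrt\tau\,\zeta)]\,\d\zeta\to\int_{\r_+}e^{-\A\zeta}g_{1-t}(\eta,\zeta)\,\d\zeta=h_t(\eta)$; similarly $G^{(\tau)}_{t\tau}(\sqrt\tau\,\eta)\to\int_{\r_+}e^{-\C\zeta}g_t(\eta,\zeta)\,\d\zeta=:\bar h_t(\eta)$, with $\bar h_0(\eta)=e^{-\C\eta}1_{\eta>0}$; and $\tfrac1{\sqrt\tau}C^{(\tau)}_{\A/\sqrt\tau,\C/\sqrt\tau}\to\int_{\r_+^{2}}e^{-\A\xi-\C\eta}g_1(\xi,\eta)\,\d\xi\,\d\eta=\mathfrak C_{\A,\C}$ --- and the integrability of this last integral is again governed by $\A+\C>0$, through the diagonal $\xi=\eta$ where $g_1(\xi,\xi)\to\pi^{-1/2}$. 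Assembling, $\tau^{k/2}\rho_\tau(\sqrt\tau\,\xi_1,\dots,\sqrt\tau\,\xi_k)$ converges pointwise to $\tfrac1{\mathfrak C_{\A,\C}}\bar h_{t_1}(\xi_1)h_{t_k}(\xi_k)\prod_{i=2}^{k}g_{t_i-t_{i-1}}(\xi_{i-1},\xi_i)$ on $\r_+^{k}$ and to $0$ elsewhere; running the same telescoping of $h$-transforms backwards identifies this with the $(t_1,\dots,t_k)$-density of the process $\widetilde\eta$ of \eqref{eta0}--\eqref{C:norm:eta} (the factor $\bar h_{t_1}$ being what the initial density $\tfrac1{\mathfrak C_{\A,\C}}e^{-\C x}h_0(x)$ produces after propagation by $g_{t_1}$, reducing to $e^{-\C\xi_1}1_{\xi_1>0}$ when $t_1=0$). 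Since prelimit and limit are both probability densities on $\r^{k}$, Scheffé's lemma promotes pointwise convergence to total-variation convergence, hence to \eqref{BD:lim}.

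The main obstacle is the uniformity required by the three dominated-convergence arguments: one needs the Bessel expansion uniform in $u$ over the whole $e^{-t\tau u^{2}/4}$-damped range, and --- more delicately --- a single $\tau$-independent dominating function for $\sqrt\tau\,p_{t\tau}(\sqrt\tau\,\xi,\sqrt\tau\,\zeta)$ integrable against both $e^{-\A\zeta}\,\d\zeta$ and $e^{-\C\zeta}\,\d\zeta$ as $\zeta$ runs over all of $\r$. This calls for a Gaussian-type tail bound on $p_{t\tau}$ for large $\zeta$ (available because the $u$-integral in \eqref{pt} is the Fourier transform of a Gaussian-damped integrand, hence decays superpolynomially in its argument) together with the large-argument decay of $K_{\i u}$ for $\zeta\le0$; with these in hand, everything else is the bookkeeping of the telescoping Doob transform.
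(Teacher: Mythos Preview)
Your overall strategy---proving pointwise convergence of the finite-dimensional densities and then invoking Scheff\'e---is exactly the paper's. The paper writes the rescaled joint density in the same product form (with endpoints $t_0=0$, $t_n=1$ so that the $G$-factor reduces to $e^{-\C x_0}$), establishes the kernel limit $\sqrt\tau\,p_{t\tau}(\sqrt\tau\xi,\sqrt\tau\eta)\to g_t(\xi,\eta)1_{\xi,\eta>0}$ by the same route (the asymptotic $K_{\i v/\sqrt\tau}(e^{-x\sqrt\tau})/\sqrt\tau\to\sin(vx)/v$ for $x>0$ and $\to 0$ for $x\le0$, combined with $\tau/|\Gamma(\i v/\sqrt\tau)|^2\to v^2$), and assembles the factors.

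Where you diverge from the paper is in the treatment of the normalizing constant $C^{(\tau)}_{\A/\sqrt\tau,\C/\sqrt\tau}$ and of the endpoint factor $H_0$. You propose to handle both by the same dominated-convergence argument applied to the integral representations in terms of $p_\tau$, and you correctly flag the required uniform $\tau$-independent Gaussian-type bound on $\sqrt\tau\,p_{t\tau}(\sqrt\tau\xi,\sqrt\tau\zeta)$ as the ``main obstacle.'' This is a genuine gap: when $\min(\A,\C)<0$ (which the hypothesis $\A+\C>0$ allows), the weight $e^{-\A\zeta}$ or $e^{-\C\zeta}$ is exponentially growing, and you would need a uniform-in-$\tau$ super-exponential tail bound on the rescaled Yakubovich kernel to push the limit through the integral. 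You sketch how one might get such a bound but do not provide it.

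The paper sidesteps this entirely. It first uses the time-reversal symmetry $(Y^{(\A,\C)}_{\tau-t})\stackrel{d}{=}(Y^{(\C,\A)}_t)$ to reduce to $\A>0$; then, crucially, it does \emph{not} compute $\lim \sqrt\tau/C^{(\tau)}_{\A/\sqrt\tau,\C/\sqrt\tau}$ via the double integral \eqref{C(ac)} but via the explicit analytic-continuation formula \eqref{BKWW:T3.2}, which for $\C<0$ contains a discrete atom term. That atom contributes a nonzero piece $-\tfrac{2\C e^{\C^2/4}}{\A^2-\C^2}$ to the limit, and a separate lemma then shows that the integral part plus this atom equals $\mathfrak C_{\A,\C}$ (via the identity $\mathfrak C_{\A,\C}=\mathfrak C_{\A,-\C}-\tfrac{2\C e^{\C^2/4}}{\A^2-\C^2}$ for $\C<0$). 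Likewise $H_0$ is handled through the explicit spectral formula \eqref{Ht-explicit} (no atom since $\A>0$), where dominated convergence is immediate against $e^{-v^2/4}$ in the $v$-variable. So the paper trades your missing kernel tail bound for explicit spectral formulas that make all the limits elementary; your route would be cleaner if the uniform bound were in hand, but as written it leaves exactly the hard analytic step undone.
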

The proof of this result is in Section \ref{Sect:KPZ:fixed}.

\begin{remark}\label{remark_Doob_2}
Function $g_t(x,y)$ defined in
\eqref{eta:heat} is the transition probability density function of a Brownian motion of variance rate $1/2$ killed at the time when it first hits zero. Applying Girsanov's theorem and Doob's $h$-transform we conclude that $p_{s,t}(x,\d y)$ defined in
\eqref{eta:trans} is the transition probability kernel of Brownian motion of variance rate $1/2$ and drift $-\A t/2$ and conditioned to stay positive for $0\le t \le 1$.
\end{remark}

\arxiv{
It is a well-known fact that the function $g_t(x,y)$ in \eqref{eta:heat} is the transition probability density of Brownian motion of variance rate $1/2$ that is killed when it first hits zero. Define  %
\begin{equation*}\label{G-Girsanov}
 p_t^{(3)}(x,y)= e^{-\A(y-x) - \frac{\A^2}{4} t} g_t(x,y).
\end{equation*}
By Girsanov's theorem, $p^{(3)}_t(x,y)$ is the transition probability density function of the process $(X^{(3)}_t)$, which is the Brownian motion of variance rate $1/2$ and drift $-\A t/2$ that is killed when it first hits zero.
Then the function
$$
H^{(3)}_s(x)=\int_0^{\infty}
 p_{1-s}^{(3)}(x,y) d y, \;\;\; x>0, \; 0<s<1,
$$
is the probability that the process $(X^{(3)}_t)$ started at point $x$ at time $s$ stays positive until time $1$. Let $(X^{(4)}_t)_{t\in [0,1]}$ be the process $(X^{(3)}_t)$ conditioned to survive until time $1$ (equivalently, we could condition on the event that the process $(X^{(3)}_t)$ stays positive for all $t\in [0,1]$). By Markov property it is easy to see that the transition probability density of the process $(X^{(4)}_t)_{t\in [0,1]}$ is
$$
\frac{H^{(3)}_{t}(y)}{H^{(3)}_{s}(x)} p^{(3)}_{t-s}(x,y) \d y, \;\;\; x,y>0, \; 0<s<t<1.
$$
After some simple algebra one can check that this is the same transition probability density function as $p_{s,t}(x,y)$ given in
\eqref{eta:trans}, thus the process
$(\tilde \eta_t)$ has the same transition probabilities as the Brownian motion of variance rate $1/2$ and drift $-\A t/2$ that is conditioned to stay positive for all $t\in [0,1]$.
}

We remark that  according to \cite[formula  (31)]{barraquand2022steady}, for process $(\eta_t)_{t\in [0,1]}:=(\widetilde \eta_t - \widetilde \eta_0)_{t\in [0,1]}$,    we have %
\begin{equation}
  \label{B:eta:dens}
    \frac{\d \p_\eta}{\d \p_B}=\frac{1}{(\A+\C)\mathfrak C_{\A,\C}} e^{-\A \beta_1+(\A+\C) \min_{0\leq t\le 1} \beta_t}
\end{equation}
with  the normalizing constant    given by \eqref{C:norm:eta}.
This can be verified by combining Theorem \ref{Thm:Y2K} and Theorem \ref{Thm:KPZ:fixed}.

\arxiv{
Indeed, these two theorems show that for  $\A+\C>0$, the law of the process $\eta\topp{\A,\C}$ is the limit as $\tau\to\infty$ of the process
$$\left(X_t\topp \tau\right)_{t\in[0,1]}=\left(\frac{1}{\sqrt{\tau}}X_{ t \tau }\topp{\A/\sqrt{\tau},\C/\sqrt{\tau}}\right)_{t\in[0,1]},$$
where $X:=\left(X_t\topp{\A,\C}\right)_{t\in[0,\tau]}$ is the process introduced in \eqref{Y2X} with the Radon--Nikodym derivative on  $C[0,\tau]$ with respect to the law of $B\topp \tau:=(B_t)_{t\in [0,\tau]}$   given by \eqref{eq:Bar-LeD}.
 Since process $B\topp \tau$ has the same law as process $\left(\sqrt{\tau}B\topp 1 _{t/\tau}\right)_{t\in[0,\tau]}$, we have

\begin{equation}
    \label{RN-X-tau}
    \frac{d
\p_{ X\topp \tau}
}{d \mathbb{P}_{B\topp 1}}(\wt \beta)
=\frac{1}{\mathfrak K_{\A/\sqrt{\tau},\C/\sqrt{\tau}}\topp \tau}e^{-\A \wt \beta_1}\left( \tau \int_0^1 e^{-2 \sqrt{\tau}\wt \beta_t}\d t\right)^{-(\A+\C)/(2\sqrt{\tau})}, \; \wt \beta=(\wt \beta_t)\in C[0,1].
\end{equation}
Indeed, if       $ \frac{d
\p_{ X}
}{d \mathbb{P}_{B\topp \tau}}(\beta)=\Phi\left((\beta_x)_{x\in[0,\tau]}\right)$ for some
 $\Phi:C[0,\tau]\to\r_+$,  then
 $$ \frac{d
\p_{ X\topp \tau}
}{d \mathbb{P}_{B\topp 1}}(\wt \beta)=\Phi\left((\sqrt{\tau}\wt \beta_{t/\tau})_{t\in[0,\tau]}\right)$$ for $\wt \beta\in C[0,1]$.
With
$$\Phi(\beta)=\frac{1}{\mathfrak K}  e^{-\A \beta_\tau/\sqrt{\tau}} \left(\int_0^\tau e^{-2 \beta_t}\d t\right)^{-(\A+\C)/(2\sqrt{\tau})}$$ we get
\begin{multline*}
  \Phi\left((\sqrt{\tau}\wt \beta_{t/\tau})_{t\in[0,\tau]}\right)
=\frac{1}{\mathfrak K}  e^{-\A (\sqrt{\tau} \wt \beta_1)/\sqrt{\tau}} \left(\int_0^\tau e^{-2 \sqrt{\tau}\wt \beta_{t/\tau}}\d t\right)^{-(\A+\C)/(2\sqrt{\tau})}
\\=\frac{1}{\mathfrak K}e^{-\A   \wt \beta_1 } \left(\tau\int_0^1 e^{-2 \sqrt{\tau}\wt \beta_{t}}\d t\right)^{-(\A+\C)/(2\sqrt{\tau})}
\end{multline*}

We now check that the density \eqref{RN-X-tau} converges to the correct limit.  We first verify convergence of the normalization constant. From
\eqref{K2C} and Lemma \ref{Llem:C2}  we get
\begin{multline*}
   \lim_{\tau\to\infty} \mathfrak K_{\A/\sqrt{\tau},\C/\sqrt{\tau}}\topp \tau=
   \lim_{\tau\to\infty}
\tfrac{2 { C}_{\A,\C}^{(\tau)}}{2^{(\A+\C)/(\sqrt{\tau})}\Gamma(\tfrac{\A+\C}{2\sqrt{\tau}})}
 =  2 \lim_{\tau\to\infty}
\tfrac{{ C}_{\A,\C}^{(\tau)}}{\sqrt{\tau}}\lim_{\tau\to\infty} \frac{\sqrt{\tau}}{\Gamma(\tfrac{\A+\C}{2\sqrt{\tau}})}
\\= 2 \mathfrak C_{\A,\C} \frac{\A+\C}{2} =(\A+\C)\mathfrak C_{\A,\C}.
\end{multline*}
 Since $\A+\C>0$,
$$\lim_{\tau\to\infty} \left(\tau \int_0^1 e^{-2 \sqrt{\tau}\beta_t}\d t\right)^{-(\A+\C)/(2\sqrt{\tau})}
=e^{(\A+\C)\min_{0\leq t\leq 1}\beta_t},$$
thus the Radon--Nikodym density \eqref{RN-X-tau} converges to the expression \eqref{B:eta:dens}.
By Scheff\'e's theorem, pointwise convergence of the probability density functions implies weak convergence of measures, thus identifying the law of the limiting process uniquely.

}
\subsubsection{Markov processes for the stationary measure of KPZ on the half-line}
The following is a version of  point
ii(a) in Section \ref{sec:descr}.
 In view of Theorem \ref{Thm:Y2K}, it also provides a different description of the limit process in \cite[Theorem 1.3.(i)]{hariya2004limiting}, representing it as $(Z_t\topp\C-Z_0\topp\C)_{t\geq 0}$.

\begin{theorem}%
\label{T:halfL} %
   If $\A,\C>0$  then %
   \begin{equation*}\label{Half:line:Z}
     \left(Y_{t}\topp{\A,\C }\right)_{t\in[0,\tau]} \tofdu (Z_t\topp \C)_{t\geq 0}  \mbox{  as $\tau\to\infty$},
   \end{equation*}
  where $(Z_t\topp \C)$ is an $\r$-valued Markov process with the initial distribution
  \begin{equation}\label{Z0}
     \p (Z_0\topp \C=\d x)=\frac{4 e^{-\C x} K_0(e^{-x})}{2^\C \Gamma(\C/2)^2}\d x
  \end{equation}
  and with transition probabilities
  \begin{equation}\label{Z:trans}
   \p (Z_t\topp \C=\d y|Z_s\topp \C=x)=\frac{K_0(e^{-y})}{K_0(e^{-x})}p_{t-s}(x,y)\d y,\; 0\leq s<t,
  \end{equation}
where $p_{t}(x,y)$ is given by \eqref{pt}.
  \end{theorem}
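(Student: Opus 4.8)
The plan is to establish the convergence of finite-dimensional distributions directly, by passing to the limit $\tau\to\infty$ in the transition mechanism of $\left(Y_t\topp{\A,\C}\right)$ as given by \eqref{Y:trans}--\eqref{Ht}. The key observation is that the Yakubovich heat kernel $p_t(x,y)$ in \eqref{pt} does not depend on $\tau$, so the only $\tau$-dependence in the law of $\left(Y_t\topp{\A,\C}\right)$ sits in the initial density \eqref{Y0}, in the normalizing constant $C_{\A,\C}\topp\tau$, and in the $h$-transform $H_t(x)=\int_\r e^{-\A y}p_{\tau-t}(x,y)\,\d y$. Thus the whole problem reduces to understanding the large-time asymptotics of $p_T(x,y)$ and of $\int_\r e^{-\A y}p_T(x,y)\,\d y$ as $T\to\infty$.

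First I would record the precise asymptotics of the Yakubovich kernel. As $T\to\infty$, the integral in \eqref{pt} concentrates near $u=0$; using $K_{\i u}(e^{-x})\to K_0(e^{-x})$ and $|\Gamma(\i u)|^2\sim 1/u^2$ as $u\to0$, a Watson-type Laplace analysis gives
\begin{equation}\label{pT:asymp}
  p_T(x,y)\ \sim\ \frac{c}{T^{3/2}}\,K_0(e^{-x})K_0(e^{-y})\qquad (T\to\infty)
\end{equation}
for an explicit constant $c$ (coming from $\frac{2}{\pi}\int_0^\infty e^{-Tu^2/4}u^2\,\d u = \frac{4}{\sqrt\pi}\,T^{-3/2}$). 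Integrating \eqref{pT:asymp} against $e^{-\A y}$ over $y\in\r$ — which is legitimate for $\A>0$ since $K_0(e^{-y})$ decays like $e^{-e^{-y}}$ at $-\infty$ and like $e^{-y}$-times-polynomial at $+\infty$, making $\int_\r e^{-\A y}K_0(e^{-y})\,\d y$ finite precisely when $\A>0$ — yields
\begin{equation}\label{HT:asymp}
  \int_\r e^{-\A y}\,p_T(x,y)\,\d y\ \sim\ \frac{c}{T^{3/2}}\,K_0(e^{-x})\,\kappa_\A\qquad (T\to\infty),\quad \kappa_\A:=\int_\r e^{-\A y}K_0(e^{-y})\,\d y.
\end{equation}
The constant $\kappa_\A$ evaluates to $2^{\A-2}\Gamma(\A/2)^2$ by a standard Mellin-transform identity for $K_0$, which is where the explicit form of the initial law \eqref{Z0} with the factor $2^\C\Gamma(\C/2)^2$ will come from (with $\C$ in place of $\A$, after also integrating in $x$ against $e^{-\C x}$).

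Next I would assemble the finite-dimensional distributions. Fix $0=t_0<t_1<\dots<t_n$ and times inside $[0,\tau]$ for $\tau$ large. Using \eqref{Y0}, \eqref{Y:trans} and the telescoping of the $H$-factors, the joint density of $\left(Y_{t_1}\topp{\A,\C},\dots,Y_{t_n}\topp{\A,\C}\right)$ at $(x_1,\dots,x_n)$ equals
\begin{equation}\label{fdd:Y}
  \frac{1}{C_{\A,\C}\topp\tau}\Bigl(\int_\r e^{-\C x_0}h_0(x_0)\,p_{t_1}(x_0,x_1)\,\d x_0\Bigr)
  \;\prod_{k=2}^n p_{t_k-t_{k-1}}(x_{k-1},x_k)\;\cdot\; H_{t_n}(x_n),
\end{equation}
where $h_0(x_0)=\int_\r e^{-\A w}p_{\tau}(x_0,w)\,\d w$ collapses the $Y_0$ marginal — here I should double-check, using \eqref{Y0}, \eqref{Ht} and the Chapman--Kolmogorov equation for $p$, that $h_0$ is indeed $H_0$ up to the constant in \eqref{Y0}, so the two appearances of the $h$-transform are consistent. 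The only $\tau$-dependent pieces in \eqref{fdd:Y} are the prefactor $1/C_{\A,\C}\topp\tau$, the factor $h_0(x_0)$ (which involves $p_\tau$), and $H_{t_n}(x_n)=\int_\r e^{-\A w}p_{\tau-t_n}(x_n,w)\,\d w$. Applying \eqref{pT:asymp} to $p_\tau(x_0,\cdot)$ inside $h_0$, applying \eqref{HT:asymp} to $H_{t_n}(x_n)$ with $T=\tau-t_n$, and applying \eqref{C(ac)}+\eqref{pT:asymp}+\eqref{HT:asymp} to $C_{\A,\C}\topp\tau$, the common divergent factor $c\,\tau^{-3/2}$ (resp.\ its analog with $\tau-t_n\sim\tau$) cancels between numerator and denominator, and \eqref{fdd:Y} converges to
\begin{equation*}
  \frac{1}{\widetilde C}\Bigl(\int_\r e^{-\C x_0}K_0(e^{-x_0})\,p_{t_1}(x_0,x_1)\,\d x_0\Bigr)\;\prod_{k=2}^n p_{t_k-t_{k-1}}(x_{k-1},x_k)\;\cdot\;K_0(e^{-x_n}),
\end{equation*}
with $\widetilde C=\int_\r e^{-\C x}K_0(e^{-x})\,\kappa_\A\,\d x = \kappa_\A\,\kappa_\C$ — and the $\kappa_\A$ factors cancel too, leaving precisely $2^\C\Gamma(\C/2)^2/4$ as the normalizing constant in \eqref{Z0}. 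Reading off the telescoped $K_0$-ratios recovers exactly the transition law \eqref{Z:trans} and initial law \eqref{Z0} of $\left(Z_t\topp\C\right)$. Note this limiting process does not depend on $\A$, consistent with part \ref{KPZ-halfline:mc}.

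The main obstacle is the justification of the interchange of limit and integration: passing from the pointwise asymptotics \eqref{pT:asymp} of the kernel to the asymptotics \eqref{HT:asymp} of its $e^{-\A y}$-integral, and more seriously controlling the $x_0$-integral in \eqref{fdd:Y} and the $(x,y)$-integral defining $C_{\A,\C}\topp\tau$ uniformly in $\tau$. For this I would prove a uniform two-sided bound of the form $p_T(x,y)\le C(1+T)^{-3/2}K_0(e^{-x})K_0(e^{-y})$ valid for all $T\ge T_0>0$ (together with a matching lower bound on compacts), using the integral representation \eqref{pt}, the monotonicity $|K_{\i u}(e^{-x})|\le K_0(e^{-x})$, and the Gaussian factor $e^{-Tu^2/4}$ to tame the $u$-integral; this plus the integrability of $e^{-\A y}K_0(e^{-y})$ and $e^{-\C x}K_0(e^{-x})$ for $\A,\C>0$ gives a dominating function and lets dominated convergence finish the argument. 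A secondary point to verify carefully is that $C_{\A,\C}\topp\tau<\infty$ and $H_t(x)<\infty$ for the relevant parameter range, which is exactly the condition $\A,\C>0$ (as opposed to merely $\A+\C>0$) — this is the place where the hypothesis of the theorem is genuinely used.
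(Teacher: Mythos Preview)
Your approach is correct and matches the paper's: both extract the $\tau^{-3/2}$ asymptotics from the concentration of the $u$-integral in \eqref{pt} near $u=0$ via $1/|\Gamma(\i u)|^2\sim u^2$, yielding the $K_0$-factorization and the cancellation of the divergent prefactor between $H_t$ and $C_{\A,\C}\topp\tau$. The paper shortcuts your dominated-convergence step by first integrating out $y$ with the Mellin identity \eqref{E6.8(26)} (giving the closed form \eqref{Ht-explicit} for $H_t$, so only a single $u$-integral remains) and by invoking the explicit expression \eqref{BKWW:T3.2} for $C_{\A,\C}\topp\tau$; note also that after telescoping, your joint-density display should not contain the factor $h_0(x_0)$, so the only $\tau$-dependent pieces are $H_{t_n}(x_n)$ and $C_{\A,\C}\topp\tau$, exactly as in the paper.
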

  The proof of this result is in Section \ref{Sect:T:halfL}.  By comparing the Laplace transforms (see \eqref{E6.8(26)}) we notice that
  the law  \eqref{Z0} is
  the law of  $-  \log \left( 2 \sqrt{\gamma_{\C/2}\tilde\gamma_{\C/2}}\right)$,  where $\gamma_{\C/2}$ and $\tilde \gamma_{\C/2}$  are independent  Gamma random variables with shape parameter $\C/2$. %

From Theorem \ref{Thm:Y2K}, and \cite{hariya2004limiting} it follows that if $\A+\C>0$ with $\A<0$, then process
$\big(Y_{t}\topp{\A,\C }-Y_{0}\topp{\A,\C }\big)_{t\in[0,\tau]}$  converges in distribution as $\tau\to\infty$.  This suggests the following  result related to point ii(b)  in Section \ref{sec:descr}.

\begin{theorem}
\label{T:halfL:hd}   If $\A+\C>0$, with
$\A\leq 0$ %
then
$$ \left(Y_{t}\topp{\A,\C }\right)_{t\in[0,\tau]}  \tofdd (Z_t\topp{\A,\C})_{t\geq 0}  \mbox{ as $\tau\to\infty$ }$$ where $(Z_t\topp{\A,\C})_{t\geq 0}$
 is an $\r$-valued Markov process with the initial distribution
  \begin{equation}\label{Z0:hd}
     \p (Z_0\topp {\A,\C}=\d x)=\frac{4}{  2^\C \Gamma \left(\frac{\C-\A}{2}\right) \Gamma
   \left(\frac{\A+\C}{2}\right)}e^{-\C x} K_{\A}\left( e^{-x}\right)\d x
  \end{equation}
  and with transition probabilities
  \begin{equation*}\label{Z:trans:hd}
   \p (Z_t\topp {\A,\C}=\d y|Z_s\topp {\A,\C}=x)=\frac{e^{-\A^2t/4}K_\A(e^{-y})}{e^{-\A^2s/4}K_\A(e^{-x})}p_{t-s}(x,y) \d y,\; 0\leq s<t.
  \end{equation*}
\end{theorem}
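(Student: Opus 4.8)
The plan is to compute the finite-dimensional densities of $\bigl(Y_t\topp{\A,\C}\bigr)_{t\in[0,\tau]}$ explicitly, show that they converge pointwise to those of $\bigl(Z_t\topp{\A,\C}\bigr)$, and then conclude by Scheff\'e's lemma. Fix $0\le t_1<\dots<t_n\le\tau$. By \eqref{Y:trans}, \eqref{Y0} and \eqref{Ht} the Doob $h$-factors telescope, so the joint density of $\bigl(Y_{t_1}\topp{\A,\C},\dots,Y_{t_n}\topp{\A,\C}\bigr)$ at $(x_1,\dots,x_n)$ equals
$$\frac{1}{C_{\A,\C}\topp{\tau}}\,\psi_{t_1}(x_1)\Bigl(\prod_{i=2}^{n}p_{t_i-t_{i-1}}(x_{i-1},x_i)\Bigr)H_{t_n}(x_n),$$
where $\psi_{t_1}(x_1)=e^{-\C x_1}$ if $t_1=0$ and $\psi_{t_1}(x_1)=\int_\r e^{-\C x_0}p_{t_1}(x_0,x_1)\,\d x_0$ if $t_1>0$ (neither depends on $\tau$), and $H_{t_n}(x)=\int_\r e^{-\A y}p_{\tau-t_n}(x,y)\,\d y$. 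All $\tau$-dependence sits in the factor $H_{t_n}(x_n)/C_{\A,\C}\topp{\tau}$, so the theorem reduces to the large-time behaviour of $s\mapsto\int_\r e^{-\A y}p_s(x,y)\,\d y$ and of $C_{\A,\C}\topp{\tau}=\int_\r e^{-\C y}\bigl(\int_\r e^{-\A x}p_\tau(x,y)\,\d x\bigr)\,\d y$.

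The crucial lemma to establish is: for $\A<0$, with $\mu:=-\A>0$,
$$\int_\r e^{-\A y}p_s(x,y)\,\d y\ \sim\ \frac{2^{1-\mu}}{\Gamma(\mu)}\,e^{\A^2 s/4}\,K_\mu\!\bigl(e^{-x}\bigr)\qquad(s\to\infty),$$
locally uniformly in $x$. I would prove it from two ingredients. First, an eigenfunction identity for the Yakubovich semigroup: for $\mu>0$,
$$\int_\r p_s(x,y)\,K_\mu\!\bigl(e^{-y}\bigr)\,\d y\ =\ e^{\mu^2 s/4}\,K_\mu\!\bigl(e^{-x}\bigr).$$
Formally this is the analytic continuation $\i u\mapsto\mu$ of the spectral relation behind \eqref{pt}; rigorously one checks that both sides solve the heat equation $\partial_s w=\tfrac14\bigl(\partial_x^2-e^{-2x}\bigr)w$ (the generator attached to $p_t$, coming from the modified Bessel equation for $K_\mu$ written in the variable $e^{-x}$) with initial datum $K_\mu(e^{-\cdot})$, and that both lie in a uniqueness class for it — the term $-e^{-2x}$ only helps at $-\infty$, while at $+\infty$ both sides are $O(e^{\mu x})$, a Tychonoff regime. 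Second, from the small-argument expansion of $K_\mu$ one gets $\alpha_0=2^{1-\mu}/\Gamma(\mu)$ and, when $\mu>2$, finitely many constants $c_j$ $(1\le j<\mu/2)$ so that
$$\rho(y):=e^{-\A y}-\alpha_0 K_\mu\!\bigl(e^{-y}\bigr)-\sum_{1\le j<\mu/2}c_j K_{\mu-2j}\!\bigl(e^{-y}\bigr)$$
is bounded on $\r$ and decays exponentially at $\pm\infty$; then, using $|K_{\i u}(e^{-y})|\le K_0(e^{-y})$ and $\int_0^\infty u\sinh(\pi u)e^{-su^2/4}\,\d u=O(s^{-3/2})$, one gets $\bigl|\int_\r p_s(x,y)\rho(y)\,\d y\bigr|=O\!\bigl(s^{-3/2}K_0(e^{-x})\bigr)$. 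Applying $p_s$ termwise, the contribution $\alpha_0 e^{\mu^2 s/4}K_\mu(e^{-x})$ dominates, the $K_{\mu-2j}$-terms being $O\!\bigl(e^{(\mu-2j)^2 s/4}\bigr)=o\!\bigl(e^{\mu^2 s/4}\bigr)$. (When $\A=0$, the maximal-current boundary, one instead finds $\int_\r p_s(x,y)\,\d y\sim(\mathrm{const})\,s^{-1/2}K_0(e^{-x})$ and the statement is covered by Theorem \ref{T:halfL}, where $K_\A=K_0$ and no exponential factor is present.)

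Granting the lemma, the rest is routine. The Mellin transform $\int_\r e^{-\C y}K_\nu(e^{-y})\,\d y=\int_0^\infty z^{\C-1}K_\nu(z)\,\d z=2^{\C-2}\Gamma\!\bigl(\tfrac{\C+\nu}{2}\bigr)\Gamma\!\bigl(\tfrac{\C-\nu}{2}\bigr)$ is finite for all $0\le\nu\le\mu$ since $\C>-\A=\mu$; with it, and with the decomposition above furnishing an integrable dominating function, dominated convergence yields $e^{-\A^2\tau/4}C_{\A,\C}\topp{\tau}\to\alpha_0\,2^{\C-2}\Gamma\!\bigl(\tfrac{\C-\A}{2}\bigr)\Gamma\!\bigl(\tfrac{\C+\A}{2}\bigr)$. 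Substituting $H_{t_n}(x_n)\sim\alpha_0 e^{\A^2(\tau-t_n)/4}K_\mu(e^{-x_n})$ and this asymptotics into the joint density, the constants $\alpha_0$ cancel, $e^{\A^2(\tau-t_n)/4}/e^{\A^2\tau/4}\to e^{-\A^2 t_n/4}$, and using $\tfrac{1}{2^{\C-2}}=\tfrac{4}{2^\C}$ and $K_\mu=K_\A$ the pointwise limit is exactly the joint density of $\bigl(Z_{t_1}\topp{\A,\C},\dots,Z_{t_n}\topp{\A,\C}\bigr)$ obtained from \eqref{Z0:hd}--\eqref{Z:trans:hd} by the same telescoping of the factors $e^{-\A^2 t_i/4}$ and $K_\A(e^{-x_i})$. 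The eigenfunction identity moreover shows that $\bigl(Z_t\topp{\A,\C}\bigr)$ is a genuine Markov process (the kernels \eqref{Z:trans:hd} are stochastic and \eqref{Z0:hd} integrates to $1$, again by the Mellin formula), so both families of $n$-dimensional densities are probability densities and Scheff\'e's lemma upgrades the pointwise convergence to convergence in total variation, in particular to $\tofdd$.

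The main obstacle is the eigenfunction identity $\int_\r p_s(x,y)K_\mu(e^{-y})\,\d y=e^{\mu^2 s/4}K_\mu(e^{-x})$ together with its remainder estimate: one must make rigorous the analytic continuation $\i u\mapsto\mu$ of the Kontorovich--Lebedev spectral picture although $K_\mu(e^{-\cdot})\notin L^2(\d x)$, and must supply the growth bounds making the heat-equation uniqueness argument applicable. The combinatorics of the decomposition of $e^{-\A y}$ for large $\mu$ (and the borderline values where $\mu$ is an even integer and $K_\mu$ carries logarithms) require some care but are inessential; everything else — telescoping, the Mellin evaluations, Scheff\'e — is standard.
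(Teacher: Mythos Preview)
Your overall strategy is correct and reaches the same conclusion as the paper: both reduce the finite-dimensional convergence to the single asymptotic
\[
H_t\topp\tau(x)=\int_\r e^{-\A y}\,p_{\tau-t}(x,y)\,\d y\ \sim\ \frac{2^{\A+1}}{\Gamma(-\A)}\,e^{(\tau-t)\A^2/4}\,K_\A(e^{-x}),\qquad \tau\to\infty,
\]
together with the matching asymptotic for $C_{\A,\C}\topp\tau$, after which the factors cancel exactly as you describe and Scheff\'e finishes. The difference lies in how this asymptotic is obtained.

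The paper does not pass through your eigenfunction identity and the hand-built decomposition of $e^{\mu y}$ in Bessel functions. It has available, from Lemma~\ref{Lem:Alexey}, the explicit formula \eqref{Ht-explicit} for $H_t\topp\tau(x)$, proved by writing $H_t$ as a contour integral over $\i\r$ and shifting the contour, collecting residues at $w=\pm(\A+2k)$. This gives directly
\[
H_t\topp\tau(x)=\bigl(\text{integral over $\i\r$, bounded as $\tau\to\infty$}\bigr)+2^{\A+1}\!\!\sum_{k\ge 0:\,\A+2k<0}\frac{e^{(\tau-t)(\A+2k)^2/4}}{\Gamma(-\A-2k)}\,K_{\A+2k}(e^{-x}),
\]
so the leading $k=0$ term is read off immediately; the companion formula \eqref{BKWW:T3.2} does the same for $C_{\A,\C}\topp\tau$. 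Your decomposition $e^{\mu y}=\alpha_0 K_\mu(e^{-y})+\sum_j c_j K_{\mu-2j}(e^{-y})+\rho(y)$ is in fact the residue sum of \eqref{Ht-explicit} rewritten on the other side of the kernel (note $K_{\A+2k}=K_{\mu-2k}$), and your $O(s^{-3/2}K_0(e^{-x}))$ remainder is precisely the integral term there. So the two arguments are dual; the contour-shift packaging simply delivers the whole expansion at once and avoids having to justify the eigenfunction identity as a separate step.

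Regarding that identity: the PDE-uniqueness argument you sketch is plausible but, as you acknowledge, not complete --- a Tychonoff-type theorem for $\partial_s w=\tfrac14(\partial_x^2-e^{-2x})w$ must be supplied. A cleaner rigorous route is already in the paper: specialize $F\equiv 1$ in the Hariya--Yor identity \eqref{HY-P4.1} (invoked for Theorem~\ref{T:Alexey2}); after the substitutions $\nu=\A$, $\alpha=e^{-x}$ and the time and sign changes leading to \eqref{eq:Haria_Yor3}--\eqref{eq:Haria_Yor5}, one obtains exactly
\[
\int_\r p_s(x,y)\,K_\A(e^{-y})\,\d y=e^{\A^2 s/4}\,K_\A(e^{-x}),
\]
which closes your argument without the analytic-continuation and uniqueness issues you flag.
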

The proof is in Section \ref{Sec:halfL:hd}. We note that
Markov process $(Z_t\topp{\A,\C})_{t\geq 0}$ is well defined
 when $\vert\A\vert<\C$,
and it is clear that process  $(Z_t\topp \C)_{t\geq 0}$ from Theorem \ref{T:halfL} has the same law as $(Z_t\topp{0,\C})_{t\geq 0}$.  From \eqref{E6.8(26)}  it is easy to use the Laplace transform
to identify the law \eqref{Z0:hd} as the law of   $-\log\left(2 \sqrt{\gamma_{\tfrac{\A+\C}{2}} \tilde \gamma_{\tfrac{\C-\A}{2}}}\right)$ for a pair of independent Gamma random variables.
In light of Remark \ref{remark_Doob_1} the
 transition probabilities
of the two processes identified in Theorems
 \ref{T:halfL} and
\ref{T:halfL:hd} can be described as the Brownian motion of variance rate $1/2$ and drift $-\A t/2$, killed at rate $\frac{1}{4} e^{-2x}$ and conditioned to survive for all $t\ge 0$. These asymptotic results should be compared with Theorem 1.2 in \cite{hariya2004limiting}.

Next, we confirm that
Theorems \ref{T:halfL} and \ref{T:halfL:hd}  give Markov representations for the processes \cite[Theorem 1.3(i)+(iii)]{hariya2004limiting} when $\A+\C>0$, i.e., when their parameter $m>0$. (\cite[Section 5 of Supplement]{barraquand2022steady} shows how to relate the parametrizations.)
Of course, this result follows from \cite[Theorem 1.3(i)+(iii)]{hariya2004limiting} and our previous results. However we include a somewhat more direct proof for completeness.
    \begin{theorem}\label{T:Alexey2}
  If %
  $\A+\C>0$, $|\A|<\C$,
  then  process $(Z_t\topp{\A,\C}-Z_0\topp{\A,\C})_{t\geq 0}$ has the same distribution as   process
   \begin{equation}\label{KPZ:half:hd}
    \left(B_t-\A t/2+\log\big(1+\gamma_{\tfrac{\A+\C}{2}} \int_0^t e^{-2 B_s+\A s} ds \big) \right)_{t\geq 0},
   \end{equation}
  where $\gamma_{\tfrac{\A+\C}{2}}$ is an independent gamma random  variable of shape parameter  $\tfrac{\A+\C}{2}$ and  $(B_t)_{t\ge 0}$ is a Brownian motion of variance $1/2$.
  \end{theorem}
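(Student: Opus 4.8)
\emph{Proof strategy.} The statement also follows by combining Theorem~\ref{Thm:Y2K}, Theorem~\ref{T:halfL:hd} and \cite[Theorem~1.3(iii)]{hariya2004limiting}: the first two identify $\bigl(Z_t\topp{\A,\C}-Z_0\topp{\A,\C}\bigr)_{t\ge0}$ with the $\tau\to\infty$ finite-dimensional limit of $\bigl(X_t\topp{\A,\C}\bigr)_{t\in[0,\tau]}$, and the third evaluates that limit as \eqref{KPZ:half:hd} once the parametrizations are matched. We indicate instead the more direct argument, working with the Markov process $\bigl(Z_t\topp{\A,\C}\bigr)$ itself. The key observation is the probabilistic meaning of the kernel \eqref{pt}: by Bessel's equation $f(x)=K_{\i u}(e^{-x})$ solves $f''-e^{-2x}f=-u^2f$, so $p_t$ is the kernel of the Feynman--Kac semigroup $f\mapsto\e_x\bigl[f(B_t)\exp\bigl(-\tfrac14\int_0^t e^{-2B_s}\d s\bigr)\bigr]$ generated by $\mathcal L:=\tfrac14(\partial_x^2-e^{-2x})$, where $(B_t)$ is a variance-$1/2$ Brownian motion; likewise $x\mapsto K_\A(e^{-x})$ is an eigenfunction of $\mathcal L$ with eigenvalue $\A^2/4$, which is exactly why $e^{-\A^2t/4}K_\A(e^{-x})$ is space--time harmonic in \eqref{Z:trans:hd}. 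Telescoping that $h$-transform along a time grid and inserting the initial law \eqref{Z0:hd}, one obtains, for bounded path functionals $\Psi$ and $c:=\tfrac{4}{2^{\C}\Gamma(\frac{\C-\A}{2})\Gamma(\frac{\A+\C}{2})}$,
\[
\e\!\left[\Psi\!\left(\bigl(Z_t\topp{\A,\C}-Z_0\topp{\A,\C}\bigr)_{0\le t\le T}\right)\right]
= c\,e^{-\A^2T/4}\!\int_\r e^{-\C x}\,
\e\!\left[\Psi\!\left((\beta_t)_{0\le t\le T}\right)K_\A\!\left(e^{-x-\beta_T}\right)\exp\!\bigl(-\tfrac14 e^{-2x}\!\int_0^T e^{-2\beta_s}\d s\bigr)\right]\d x ,
\]
where $(\beta_t)$ is a variance-$1/2$ Brownian motion with $\beta_0=0$.

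The next step is to carry out the $x$-integral. Substituting $r=e^{-x}$ and using the integral representation $K_\A(z)=\tfrac12(z/2)^{-\A}\int_0^\infty v^{\A-1}e^{-v-z^2/(4v)}\d v$ together with the Gaussian integral $\int_0^\infty r^{m-1}e^{-\rho r^2}\d r=\tfrac12\rho^{-m/2}\Gamma(m/2)$, the bracketed $x$-integral turns into $\Gamma(\tfrac{\C-\A}{2})\,2^{\C-2}\,e^{\A\beta_T}\int_0^\infty v^{\frac{\A+\C}{2}-1}e^{-v}\bigl(v\!\int_0^T e^{-2\beta_s}\d s+e^{-2\beta_T}\bigr)^{-\frac{\C-\A}{2}}\d v$; the product $c\cdot\Gamma(\tfrac{\C-\A}{2})2^{\C-2}$ collapses to $1/\Gamma(\tfrac{\A+\C}{2})$, and a Cameron--Martin shift replacing $\beta_t$ by $W_t:=\beta_t+\A t/2$ (with $W_0=0$) absorbs the factor $e^{\A\beta_T-\A^2T/4}$. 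Writing $\Lambda_t:=\int_0^t e^{-2W_s}\d s=\int_0^t e^{-2B_s-\A s}\d s$, this yields
\[
\e\!\left[\Psi\!\left(\bigl(Z_t\topp{\A,\C}-Z_0\topp{\A,\C}\bigr)_{0\le t\le T}\right)\right]
=\frac{1}{\Gamma\!\left(\frac{\A+\C}{2}\right)}\,
\e\!\left[\Psi\!\left((W_t)_{0\le t\le T}\right)\int_0^\infty v^{\frac{\A+\C}{2}-1}e^{-v}\bigl(v\Lambda_T+e^{-2W_T}\bigr)^{-\frac{\C-\A}{2}}\d v\right].
\]

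It remains to recognize the right-hand side as $\e\bigl[\Psi\bigl((W_t+\log(1+\gamma\Lambda_t))_{0\le t\le T}\bigr)\bigr]$ with $\gamma=\gamma_{(\C-\A)/2}$ independent of $W$, which is precisely the asserted equality of laws (recall $e^{-2W_s}=e^{-2B_s-\A s}$). This is a pathwise exponential-functional identity of Hariya--Yor type, and is where the real work lies. The plan is to perform the change of variables $\bar W_t:=W_t+\log(1+v\Lambda_t)$: one checks that $\bar\Lambda_t:=\int_0^t e^{-2\bar W_s}\d s$ equals $\Lambda_t/(1+v\Lambda_t)$, so the map is inverted by $\bar W\mapsto\bigl(\bar W_t+\log(1-v\bar\Lambda_t)\bigr)_t$ on the set $\{v\bar\Lambda_T<1\}$, which carries the whole law of $\bar W$; then one computes by Girsanov the Radon--Nikodym density of the law of $(\bar W_t)_{t\le T}$ with respect to $\p_W$, and averages it against the Gamma density $v^{(\C-\A)/2-1}e^{-v}/\Gamma(\tfrac{\C-\A}{2})$. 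The algebraic point is that this averaging turns the exponent $\tfrac{\C-\A}{2}-1$ of $v$ into $\tfrac{\A+\C}{2}-1$ and produces the factor $\bigl(v\Lambda_T+e^{-2W_T}\bigr)^{-(\C-\A)/2}$, reproducing the weight in the second display.

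The main obstacle is this last step: the change of variables degenerates at the boundary $v\bar\Lambda_T\uparrow1$, so the Girsanov density and the exchange of integrals must be handled carefully, and one must check that the normalization comes out exactly $1/\Gamma(\tfrac{\A+\C}{2})$. I would cross-check that normalization against the $\tau\to\infty$ limit of \eqref{eq:Bar-LeD} via \eqref{K2C}; the Bessel-function bookkeeping leading to the two displays above is routine.
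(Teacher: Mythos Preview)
Your two displayed identities are correct, and the indirect route you mention (Theorem~\ref{Thm:Y2K} $+$ Theorem~\ref{T:halfL:hd} $+$ \cite[Theorem~1.3(iii)]{hariya2004limiting}) is exactly what the paper acknowledges just before its direct proof. Where your ``direct'' argument and the paper's diverge is at the last step.

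The paper does \emph{not} try to prove the pathwise identity in your final paragraph from scratch. Instead it quotes \cite[Proposition~4.1]{hariya2004limiting}, which (after the substitutions $\nu=-\A$, $\alpha=e^{-x}$, $t\mapsto t/2$, $W\mapsto -W$) already packages the Girsanov computation and yields, for each fixed $x$,
\[
\e\!\left[F\!\left(X^{(\A)}_s+\log\Bigl(1+\tfrac12\mathrm{GIG}(-\A,e^{-x})\!\!\int_0^s e^{-2X^{(\A)}_u}\d u\Bigr),\,s\le t\right)\right]
=\e\!\left[F(X_s-x,\,s\le t)\,e^{-\tfrac14\int_0^t e^{-2X_s}\d s}\,
\frac{e^{-\A^2 t/4}K_\A(e^{-X_t})}{K_\A(e^{-x})}\,\Big|\,X_0=x\right],
\]
with $X^{(\A)}_t=B_t+\A t/2$. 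The right-hand side is precisely the $h$-transformed killed Brownian motion defining $Z^{(\A,\C)}$ started at $x$. Integrating both sides against the initial law \eqref{Z0:hd}, the only remaining task is a one-line mixture computation: the $\tfrac12\mathrm{GIG}(-\A,e^{-x})$ law averaged against $\propto e^{-\C x}K_\A(e^{-x})\,\d x$ collapses to the Gamma$\bigl(\tfrac{\C-\A}{2}\bigr)$ law. That single computation replaces everything in your last paragraph.

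Your route integrates out $x$ \emph{first}, which is why you land on a weighted-path identity that still needs a Hariya--Yor-type result. The Girsanov plan you sketch would work (and your inversion formula $\bar\Lambda_t=\Lambda_t/(1+v\Lambda_t)$ is correct), but carrying it out is essentially re-deriving \cite[Proposition~4.1]{hariya2004limiting}: you must compute the density of the law of $\bar W$ with respect to that of $W$, then check that averaging it against the Gamma$\bigl(\tfrac{\C-\A}{2}\bigr)$ density produces the weight $v^{(\A+\C)/2-1}e^{-v}(v\Lambda_T+e^{-2W_T})^{-(\C-\A)/2}$ with the right normalization. This is where your proposal is genuinely incomplete, and the ``boundary degeneracy'' you flag is less of an issue than getting that density explicitly. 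The paper's order of operations---quote the pathwise identity with the GIG multiplier, then mix GIG into Gamma---sidesteps all of that.
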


{%
Note that both the initial distribution and the transition probability density function of the process
$(Z_t\topp{\A,\C})_{t\geq 0}$ (as described in Theorem \ref{T:halfL:hd})
is invariant with respect to change $\A \mapsto -\A$, therefore, the same is true for the process given in \eqref{KPZ:half:hd}. This latter result also follows from  \cite{hariya2004limiting}.
The proof of Theorem \ref{T:halfL:hd} is in Section \ref{Sec:Alexey2}.
}

We conclude with a remark about the limit in point ii(c) in Section \ref{sec:descr}.
From Theorem \ref{Thm:Y2K}, and \cite[Theorem 1.3.(ii)]{hariya2004limiting} it follows that if $\A+\C>0$, with $\C\leq 0$ then
 \begin{equation*}\label{Half:line:Z:II}
     \left(Y_{t}\topp{\A,\C }-Y_{0}\topp{\A,\C }\right)_{t\in[0,\tau]} \tofdd (B_t+\C t/2)_{t\geq 0}  \mbox{  as $\tau\to\infty$},
   \end{equation*}
   where $(B_t)_{t\ge 0}$ is the Brownian motion of variance  rate $1/2$.
   However,   when $\C<0$  process $(Y_t\topp{\A,\C})_{t\in[0,\tau]}$  does not converge as $\tau\to\infty$
because under a different scaling
$ Y_{0}\topp{\A,\C }$, which depends on $\tau$ through \eqref{Y0},  is asymptotically normal:
\begin{equation}
   \label{Half:line:N}
   \sqrt{\frac{2}{\tau}}\left( Y_{0}\topp{\A,\C }+\tau \C/2\right)\Rightarrow N(0,1) \;\mbox{  as $\tau\to\infty$}.
\end{equation}

The proof of \eqref{Half:line:N} is in Section \ref{Sec:HalfZ:II}.

\subsubsection{Markov processes for the stationary measures  of the (hypothetical) KPZ fixed point  on the half-line}
 The following is related to point 
 iii(a) in Section \ref{sec:descr}.
  \begin{theorem} \label{T:Z:fixed}  Fix $\C>0$ and  denote  by $(Z_t\topp {\C})_{t\geq 0}$ the Markov process from Theorem \ref{T:halfL}. Then
$$\frac{1}{\sqrt{T}}\left(Z_{tT}\topp {\C/\sqrt{T}}\right)_{t\geq 0} \tofdu ( \rho_t\topp \C)_{t\geq 0} \mbox{  as 
$T\to\infty$}, $$
    where $(\rho_t\topp \C)_{t\geq 0}$ is $\tfrac{1}{\sqrt{2}}$ multiple of
    the 3-dimensional Bessel process  %
    with the  initial distribution
    \begin{equation}
      \label{KPZ:fixed:Z0}
      \p( \rho_0\topp \C=\d x)=\C^2 x e^{-\C x} {\bf 1}_{\{x>0\}} \d x.
    \end{equation}

  \end{theorem}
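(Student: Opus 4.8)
The plan is to prove convergence of finite-dimensional distributions directly from the Doob-transform description \eqref{Z0}--\eqref{Z:trans} of the process $(Z_t\topp\C)$, using the standard asymptotics of the modified Bessel function $K_{\i u}$ and the Yakubovich kernel $p_t$ under the diffusive rescaling. Concretely, I would set $\C_T=\C/\sqrt T$ and study the law of the vector $\bigl(T^{-1/2}Z\topp{\C_T}_{t_1 T},\dots,T^{-1/2}Z\topp{\C_T}_{t_n T}\bigr)$ for $0< t_1<\dots<t_n$ by writing out its joint density as the telescoping product of the transition kernels \eqref{Z:trans} times the initial density \eqref{Z0}, then substituting $x_i=\sqrt T\, \xi_i$ and passing to the limit. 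The factors $K_0(e^{-y})/K_0(e^{-x})$ along the chain telescope, so after rescaling one is left with $K_0(e^{-\sqrt T\xi_n})/K_0(e^{-\sqrt T\xi_0})$ times the product of the bare kernels $p_{(t_{i+1}-t_i)T}(\sqrt T\xi_i,\sqrt T\xi_{i+1})$ and the initial density $e^{-\C_T\sqrt T\xi_0}K_0(e^{-\sqrt T\xi_0})=e^{-\C\xi_0}K_0(e^{-\sqrt T\xi_0})$.

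The key analytic input is a local limit theorem for the Yakubovich kernel: under the scaling $x=\sqrt T\,\xi$, $y=\sqrt T\,\eta$, $t=sT$ with $\xi,\eta>0$ fixed, one should have
\begin{equation*}
T^{1/2}\, e^{-(\xi+\eta)\sqrt T\cdot(\text{something})}\, p_{sT}(\sqrt T\xi,\sqrt T\eta)\ \longrightarrow\ c\, g_s(\xi,\eta),
\end{equation*}
where $g_s$ is exactly the half-line heat kernel in \eqref{eta:heat}. Heuristically this is because $K_{\i u}(e^{-\sqrt T\xi})\sim$ oscillatory $\times$ Gamma-type factors for large argument-suppression, and the integral \eqref{pt} over $u$, after rescaling $u=v/\sqrt T$, concentrates and produces the $\sin(\xi v)\sin(\eta v)$ structure; the factor $1/|\Gamma(\i u)|^2=u\sinh(\pi u)/\pi\sim u^2$ for small $u$ accounts for the extra power of $x$ (hence the $x e^{-\C x}$ shape in \eqref{KPZ:fixed:Z0} rather than $e^{-\C x}$). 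I would make this rigorous using the uniform asymptotics of $K_{\i u}$ for large real argument (e.g. the representation $K_{\i u}(z)=\sqrt{\pi/(2z)}e^{-z}(1+O(1/z))$ for the relevant regime, or the more delicate uniform-in-index bounds), dominated convergence to justify the interchange of limit and the $u$-integral, and the explicit Fourier-sine evaluation in \eqref{eta:heat}. This local limit theorem is precisely the content one expects to have been assembled in the preceding sections (it underlies Theorems \ref{Thm:KPZ:fixed} and \ref{T:halfL}), so I would cite or reuse those kernel asymptotics rather than rederiving them.

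Granting the kernel limit, the remaining steps are bookkeeping. First, identify the limiting initial law: $T^{-1/2}Z_0\topp{\C_T}$ has density proportional to $e^{-\C\xi_0}\cdot\bigl(\text{rescaled }K_0(e^{-\sqrt T\xi_0})\bigr)\cdot\sqrt T\,\d\xi_0$; the rescaled $K_0$ factor contributes the missing power $\xi_0$, giving $\C^2\xi_0 e^{-\C\xi_0}1_{\xi_0>0}\,\d\xi_0$ after normalization, which is \eqref{KPZ:fixed:Z0} — one checks $\int_0^\infty \C^2 x e^{-\C x}\,\d x=1$, confirming the normalizing constant collapses correctly. Second, the telescoped boundary factor $K_0(e^{-\sqrt T\xi_n})/K_0(e^{-\sqrt T\xi_0})$ combines with one rescaled-$K_0$ from the initial density and one from the diverging-argument asymptotics at the terminal point so that every $\xi_i$, $i\ge 1$, ends up weighted by $\xi_i$ through the $g$-kernels — equivalently, the limiting Markov kernel is $g_{s,t}(\xi,\eta)=\dfrac{\eta}{\xi}g_{t-s}(\xi,\eta)$, which is exactly the Doob $h$-transform of the killed-at-$0$ heat semigroup by the harmonic function $h(x)=x$, i.e. the transition kernel of $\tfrac1{\sqrt2}$ times a $3$-dimensional Bessel process. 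Third, assemble: the limiting joint density of $(\xi_1,\dots,\xi_n)$ is $\int_0^\infty \C^2\xi_0 e^{-\C\xi_0}\prod_{i=0}^{n-1}\dfrac{\xi_{i+1}}{\xi_i}g_{t_{i+1}-t_i}(\xi_i,\xi_{i+1})\,\d\xi_0$, which is the $n$-dimensional distribution of the asserted Bessel process started from $\rho_0\topp\C$; to be fully correct one inserts $t_0=0$ and treats the first increment as transition from the random start $\rho_0\topp\C$.

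The main obstacle will be the uniform control needed to push the limit inside the $u$-integral in \eqref{pt} while the argument $e^{-\sqrt T\xi}$ of $K_{\i u}$ tends to $0$ or $\infty$ depending on the sign of $\xi$: one needs a dominating function valid simultaneously for all $u>0$ and all large $T$, exploiting both the exponential decay of $K_{\i u}(z)$ in $z$ for bounded $u$ and the Gamma-type decay of $1/|\Gamma(\i u)|^2=u\sinh(\pi u)/\pi$ balanced against the growth of $K_{\i u}$ in $u$. I expect this to reduce to the same technical lemma already used for Theorem \ref{T:halfL}, applied now along the $\C=\C/\sqrt T\to 0$ direction with the extra diffusive rescaling of time; isolating that lemma and checking that its hypotheses survive the joint limit $\tau=T\to\infty$, $\C_T\to0$ is the crux. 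Everything else — telescoping, normalization, and recognizing $h(x)=x$ as the Bessel-$3$ harmonic function — is routine.
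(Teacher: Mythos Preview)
Your approach is essentially the same as the paper's: write the joint density from \eqref{Z0}--\eqref{Z:trans}, rescale, and pass to the limit using the Yakubovich-kernel asymptotics together with the behaviour of the Doob factor $K_0$. The paper's execution is considerably shorter because the kernel limit you call ``the key analytic input'' is exactly \eqref{lim:p2g}, already proved in Section~\ref{Sect:KPZ:fixed}: one has simply $\sqrt{T}\,p_{sT}(\sqrt T\xi,\sqrt T\eta)\to g_s(\xi,\eta)$ for $\xi,\eta>0$, with no additional exponential factor $e^{-(\xi+\eta)\sqrt T\cdot(\text{something})}$; your hedging there is unnecessary and slightly misleading. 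For the $h$-transform factor the paper uses the elementary small-argument asymptotic $K_0(z)\sim -\log z$ (not the large-argument one you mention), which immediately gives $T^{-1/2}K_0(e^{-\sqrt T\xi})\to \xi\,1_{\xi>0}$ and hence both the initial density \eqref{KPZ:fixed:Z0} and the Bessel-3 transition kernel $\frac{\eta}{\xi}g_{t-s}(\xi,\eta)$; the ``main obstacle'' you anticipate about uniform control in $u$ is therefore already fully absorbed in the proof of \eqref{lim:p2g} and need not be redone.
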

(Of course, \eqref{KPZ:fixed:Z0} is the gamma law with shape parameter two and scale parameter $\C$, which is the sum of two independent exponential variables, $(\gamma_1+\wt \gamma_1)/\C$.)   The proof is in Section \ref{Sec:T:Z:fixed}.

  \begin{remark}\label{R:36}
  According to \cite[(36)]{barraquand2022steady}, %
   process $\left(\rho_t\topp \C-\rho_0\topp\C\right)_{t\geq 0}$  has the same law as the right hand side of \eqref{eq:BD2136}   and this reference  also noted the connection to the Bessel 3 process. %

   \end{remark}

The following is related to point iii(b) in  Section \ref{sec:descr}.
\begin{theorem} \label{T:half:hd:fixed} %
Let $\A+\C>0$, $\A<0$,
and denote by $(Z_t\topp{\A,\C})_{t\geq 0}$ the Markov process from Theorem \ref{T:halfL:hd}. Then
\begin{equation*}\label{Z2rho}
 \frac{1}{\sqrt{T}}\left(Z_{tT}\topp {\A/\sqrt{T},\C/\sqrt{T}}\right)_{t\geq 0} \tofdu (\rho_t\topp{\A, \C})_{t\geq 0} \mbox{ as } 
 T\to \infty,
\end{equation*}
    where $(\rho_t\topp {\A,\C})_{t\geq 0}$ is
a Markov process with transition probabilities
\begin{equation}
  \label{KPZ:fixed:rho-trans}
  \p(\rho_t=\d y|\rho_s=x)=\frac{e^{-\A^2 t/4} \sinh(\A y)}{e^{-\A^2 s/4} \sinh(\A x)}g_{t-s}(x,y)\d y, \; x>0, y>0, \; 0\leq s<t,
\end{equation}
 where $g_t(x,y)$ is given by \eqref{eta:heat},
    with the initial distribution
    \begin{equation*}
      \label{KPZ:fixed:rho0}
      \p(\rho_0\topp {\A,\C}=\d x)=\mu_{\A,\C}(\d x),
    \end{equation*}
where $\mu_{\A,\C}(\d x)= \frac{c^2-a^2}{2 a} \Big( e^{(a-c)x}-e^{-(a+c)x} \Big){\mathbf 1}_{\{x>0\}}\d x
$
is  the law of the linear combination
$\frac{1}{\A+\C}\gamma_1+\frac1{\C-\A}\tilde \gamma_1$ for a pair of independent exponential random variables.
\end{theorem}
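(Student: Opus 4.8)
The plan is to prove convergence of finite-dimensional distributions at the level of densities. By projecting onto subsets of times it suffices to fix $0=t_0<t_1<\dots<t_n$ and show that the joint density of $T^{-1/2}\big(Z_{t_0T}\topp{\A/\sqrt T,\C/\sqrt T},\dots,Z_{t_nT}\topp{\A/\sqrt T,\C/\sqrt T}\big)$ converges to that of $\big(\rho_{t_0}\topp{\A,\C},\dots,\rho_{t_n}\topp{\A,\C}\big)$. By the Markov property the prelimit joint density is the product of the initial density \eqref{Z0:hd} and the $n$ one-step kernels \eqref{Z:trans:hd} (all with $\A,\C$ replaced by $\A/\sqrt T,\C/\sqrt T$); rescaling every coordinate by $T^{-1/2}$ multiplies it by $T^{(n+1)/2}$, which we split as one factor $T^{1/2}$ per factor of the product. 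Hence it is enough to prove, for all $x,y>0$ as $T\to\infty$: (i) $\sqrt T$ times the density \eqref{Z0:hd} at $\sqrt T x$ tends to the density $\mu_{\A,\C}$ in \eqref{KPZ:fixed:rho0}; and (ii) $\sqrt T$ times the kernel \eqref{Z:trans:hd} (times $sT<tT$) evaluated at $(\sqrt T x,\sqrt T y)$ tends to \eqref{KPZ:fixed:rho-trans}. Since the prelimit joint densities are probability densities on $\r_+^{n+1}$ and the limits are again probability densities (the limiting kernel is the Doob $h$-transform of killed Brownian motion of variance $1/2$ by the space-time harmonic function $e^{-\A^2 t/4}\sinh(\A x)$), pointwise convergence upgrades to total-variation convergence by Scheffé's lemma, which yields the claimed $\tofdu$ convergence and in passing certifies that \eqref{KPZ:fixed:rho0}--\eqref{KPZ:fixed:rho-trans} define a genuine Markov process.

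Both (i) and (ii) rest on two small-argument expansions. Writing $\nu=\A/\sqrt T$ or $\nu=\i v/\sqrt T$ and using $K_\nu=\tfrac{\pi}{2\sin\pi\nu}(I_{-\nu}-I_\nu)$ with $I_\nu(z)\sim(z/2)^\nu/\Gamma(1+\nu)$ and $\sin\pi\nu\sim\pi\nu$ as $z,\nu\to0$, one gets, for fixed $x>0$,
$$
K_{\A/\sqrt T}\big(e^{-\sqrt T x}\big)\sim\frac{\sqrt T}{\A}\sinh(\A x),\qquad K_{\i v/\sqrt T}\big(e^{-\sqrt T x}\big)\sim\frac{\sqrt T}{v}\sin(v x),
$$
and from $|\Gamma(\i y)|^2=\pi/(y\sinh\pi y)$ that $|\Gamma(\i v/\sqrt T)|^{-2}\sim v^2/T$, while $\Gamma(w)\sim1/w$ as $w\to0^+$ and $2^{\C/\sqrt T}\to1$. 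For (ii): substituting $u=v/\sqrt T$ in \eqref{pt} turns $e^{-(t-s)Tu^2/4}$ into $e^{-(t-s)v^2/4}$ and $\d u$ into $\d v/\sqrt T$; inserting the three asymptotics, all powers of $T$ and of $v$ cancel and $\sqrt T\,p_{(t-s)T}(\sqrt T x,\sqrt T y)\to\frac2\pi\int_0^\infty e^{-(t-s)v^2/4}\sin(vx)\sin(vy)\,\d v=g_{t-s}(x,y)$, while the $h$-transform ratio in \eqref{Z:trans:hd} tends to $e^{-\A^2 t/4}\sinh(\A y)/\big(e^{-\A^2 s/4}\sinh(\A x)\big)$ because $(\A/\sqrt T)^2 tT/4=\A^2 t/4$. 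For (i): the normalizing constant in \eqref{Z0:hd} satisfies $2^{\C/\sqrt T}\Gamma\big(\tfrac{\C-\A}{2\sqrt T}\big)\Gamma\big(\tfrac{\A+\C}{2\sqrt T}\big)\sim 4T/(\C^2-\A^2)$, and combining with the first asymptotic above gives $\tfrac{\C^2-\A^2}{2\A}\big(e^{(\A-\C)x}-e^{-(\A+\C)x}\big)$, i.e.\ exactly $\mu_{\A,\C}$ (the overall sign is correct because $\A<0$); the representation of $\mu_{\A,\C}$ as the law of $\tfrac1{\A+\C}\gamma_1+\tfrac1{\C-\A}\tilde\gamma_1$ is the elementary convolution of two exponentials of rates $\A+\C$ and $\C-\A$.

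The step needing care is the interchange of limit and integral in (ii). I would control it with the crude uniform bound $|K_{\i u}(z)|\le K_0(z)$ (immediate from \eqref{BesselK}), the estimate $K_0(e^{-\sqrt T x})=O(\sqrt T)$ for each fixed $x>0$, and $|\Gamma(\i v/\sqrt T)|^{-2}=\tfrac1\pi\,\tfrac{v}{\sqrt T}\sinh(\pi v/\sqrt T)\le\tfrac{v}{\pi\sqrt T}\sinh(\pi v)$ for $T\ge1$: after the substitution $u=v/\sqrt T$ these make the integrand of $\sqrt T\,p_{(t-s)T}(\sqrt T x,\sqrt T y)$ dominated, uniformly in large $T$, by a constant multiple of $e^{-(t-s)v^2/4}\,v\sinh(\pi v)$, which is integrable on $(0,\infty)$, so dominated convergence applies. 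Part (i) uses only fixed-$x$ asymptotics and needs no such argument. As a consistency check, letting $\A\to0^-$ the kernel \eqref{KPZ:fixed:rho-trans} degenerates to $\tfrac yx\,g_{t-s}(x,y)$, the Doob $h$-transform of Brownian motion of variance $1/2$ killed at $0$ by $h(x)=x$, i.e.\ $\tfrac1{\sqrt2}$ times the $3$-dimensional Bessel process, and $\mu_{\A,\C}\to\C^2 x e^{-\C x}1_{x>0}\,\d x$, in agreement with Theorem~\ref{T:Z:fixed} and \eqref{KPZ:fixed:Z0}.
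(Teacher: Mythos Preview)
Your approach is the same as the paper's: establish pointwise convergence of the joint densities by combining the asymptotics $K_{\A/\sqrt T}(e^{-\sqrt T x})\sim(\sqrt T/\A)\sinh(\A x)$ for the $h$-transform and $\sqrt T\,p_{tT}(\sqrt T x,\sqrt T y)\to g_t(x,y)$ for the Yakubovich kernel (this last limit is exactly \eqref{lim:p2g}). The paper handles the initial law a little differently---it recalls that \eqref{Z0:hd} is the law of $-\log\big(2\sqrt{\gamma_{(\A+\C)/2}\tilde\gamma_{(\C-\A)/2}}\big)$ and then applies $-\eps\log\gamma_\eps\Rightarrow\gamma_1$---but your direct density computation is equally valid, and your explicit appeal to Scheff\'e is a welcome addition.

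There is, however, a slip in the one place you flag as needing care. After $u=v/\sqrt T$ the integrand of $\sqrt T\,p_{(t-s)T}(\sqrt T x,\sqrt T y)$ is
\[
e^{-(t-s)v^2/4}\,K_{\i v/\sqrt T}(e^{-\sqrt T x})\,K_{\i v/\sqrt T}(e^{-\sqrt T y})\,|\Gamma(\i v/\sqrt T)|^{-2},
\]
and your three bounds give an upper estimate of order $(C\sqrt T)^2\cdot\frac{v}{\pi\sqrt T}\sinh(\pi v)=C'\,\sqrt T\,v\sinh(\pi v)$ times the Gaussian, which is \emph{not} uniform in $T$. The crude inequality $|K_{\i u}(z)|\le K_0(z)$ throws away exactly the factor $1/v$ you need: the true size of $K_{\i v/\sqrt T}(e^{-\sqrt T x})$ is $\sim\sqrt T\sin(vx)/v$, not merely $O(\sqrt T)$. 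A workable dominant comes from the uniform expansion \eqref{Alexey-K}, which for $|\nu|<1/4$ gives $|K_{\i v/\sqrt T}(e^{-\sqrt T x})|\le|\Gamma(\i v/\sqrt T)|+\tilde C\,e^{-\sqrt T x/2}$; combined with $|\Gamma(\i v/\sqrt T)|\le\sqrt T/v$ and $T/|\Gamma(\i v/\sqrt T)|^2\le C_1 v^2$ on $\{v\le\sqrt T/4\}$, this bounds the integrand by a constant multiple of $e^{-(t-s)v^2/4}$ there, while the Gaussian factor controls the tail $\{v>\sqrt T/4\}$.
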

The proof is in Section \ref{Sec:half:hd:fixed}.       The transition probability density  function \eqref{KPZ:fixed:rho-trans} is the same as for the process $B_t - \frac{\A}{2} t$, conditioned to stay positive for all $t\ge 0$, see \cite{Rogers_Pitman:1981}.

\begin{remark}\label{R:37}
  According to \cite[(37)]{barraquand2022steady}, when  $\A<0$ and $\A+ \C>0$,  process $(\rho_t\topp {\A,\C}-\rho_0\topp {\A,\C})_{t\geq 0}$  has the same law as the right hand side of \eqref{eq:BD2137}.
   \end{remark}

\subsubsection{The  Edwards-Wilkinson  limit }
We conclude this section with a comment on the limit from point (iv) in Section \ref{sec:descr}.
 According to \cite[(29)]{barraquand2022steady}, when  $\A+\C>0$,   Theorem \ref{Thm:Y2K} implies that
    \begin{equation}\label{BD:lim:EW}
  \frac{1}{\sqrt{\tau}}\left(Y_{t \tau}\topp {\A/\sqrt{\tau},\C/\sqrt{\tau}}-Y_{0}\topp {\A/\sqrt{\tau},\C/\sqrt{\tau}}\right)_{t\in[0,1]}
  \tofdd \left(B_t+\C t/2-(\A+\C)\frac{t^2}{4}\right)_{t\in[0,1]} \mbox{ as } \tau\to0.
\end{equation}
We expect that process $ \frac{1}{\sqrt{\tau}}\left(Y_{t \tau}\topp {\A/\sqrt{\tau},\C/\sqrt{\tau}}\right)$ does not converge as $\tau\to 0$.

\section{Proof of Theorem \ref{Thm:Y2K}}\label{Sec:proofY2K}
For the proof we choose   a probability space $(\Omega,{\mathcal F}_t, {\mathbb P})$, where $\Omega$ is a space of continuous functions on $[0,\tau]$ and $\p$ is the law of Brownian motion $(B_t)\cong(W_t)/\sqrt{2}$ with variance $1/2$ which was denoted by $\p_B$ in \eqref{eq:Bar-LeD}. Under the measure ${\mathbb P}$ the coordinate process $X_t(\omega)=\omega(t)$ is a Brownian motion with variance $1/2$. We define the exponential functional as $I_t:=\int_0^t e^{-2 X_s} \d s$ and consider the two-dimensional process $(X,I)$. This process is a time-homogeneous diffusion
and from \cite[Theorem 4.1]{MatsumotoYor2005I} (see also \cite[Proposition 2]{yor1992some} or  \cite[formula (2.6)]{donati2002law})  we know that its transitional probability density function  is given by
$$
\mu^{\mathbb P}_t(x_0,z_0;x_1,z_1)=\exp\Big(- \tfrac{e^{-2x_0}+e^{-2x_1}}{z_1-z_0}\Big) \theta\Big(2\tfrac{e^{-x_0-x_1}}{z_1-z_0},\tfrac{t}{2}\Big) \frac{1}{z_1-z_0},
$$
where $t>0$, $x_i\in \r$ and $z_1>z_0\ge 0$. Here $\theta(r,t)$ is the
 (unnormalized) Hartman-Watson density function, which is   succinctly defined in \cite{hartman1974normal}
by   its Laplace transform
\begin{equation}\label{Def-HW}
  \int_0^\infty e^{-\la^2 t}\theta(\xi,t)\d t=\mathbf{I}_\la(\xi), \;\;\; \la>0, \; \xi>0,
\end{equation}
where ${\mathbf I}_\la(\xi)$ is the modified Bessel function. For literature on properties of the Hartman-Watson density we refer to \cite {MatsumotoYor2005I}, \cite{Wisniewolski2020}, and the references therein.

For $t\in [0,\tau]$, $x\in \r$ and $z\ge 0$ we define
$$
\phi_t(x,z)=\e^{\mathbb P} \Big[ e^{-\A X_t} I_t^{-\A/2-\C/2} \vert X_0=x, I_0=z\Big].
$$
For $\tau>0$ and $\A+\C>0$ we define
$$
Z_{\tau}:=\frac{1}{{\mathfrak K}_{\A,\C}^{(\tau)}} e^{-\A X_{\tau}} I_{\tau}^{-\A/2-\C/2},
$$
where ${\mathfrak K}_{\A,\C}^{(\tau)}=\phi_{\tau}(0,0)$ is a normalizing constant which ensures  ${\mathbb E}^{\mathbb P}[Z_{\tau}|X_0=I_0=0]=1$. For $t\in [0,\tau]$ we define a martingale $Z_t:=\e^{\mathbb P}[Z_{\tau}\vert {\mathcal F}_t]$ and compute
\begin{align*}
Z_t&=\e^{\mathbb P}[Z_{\tau}\vert {\mathcal F}_t]=
\frac{1}{{\mathfrak K}_{\A,\C}^{(\tau)}} \e^{\mathbb P}\Big[ e^{-\A X_{\tau}} I_{\tau}^{-\A/2-\C/2} \Big\vert {\mathcal F}_t \Big]
=\frac{1}{{\mathfrak K}_{\A,\C}^{(\tau)}}
\phi_{\tau-t}(X_t,I_t).
\end{align*}
We define a change of measure $\d {\mathbb Q}_t/\d {\mathbb P}_t=Z_t$ (where ${\mathbb P}_t$ is the restriction of ${\mathbb P}$ on ${\mathcal F}_t$ and similarly for ${\mathbb Q}_t$). Take an arbitrary bounded measurable function $f(x,z)$ and compute for $0<s<t<\tau$
\begin{align*}
\e^{\mathbb Q}[f(X_t,I_t)|{\mathcal F}_s]=\frac{1}{Z_s}\e^{\mathbb P}[Z_t f(X_t,I_t)|{\mathcal F}_s]=
\frac{ \e^{\mathbb P}[\phi_{\tau-t}(X_t,I_t)f(X_t,I_t)|X_s, I_s]}{\phi_{\tau-s}(X_s,I_s)}.
\end{align*}
Thus under the measure ${\mathbb Q}$ the process $(X,I)$ has transitional probability density function
$$
\mu^{\mathbb Q}_{s,t}(x_0,z_0;x_1,z_1)=\frac{\phi_{\tau-t}(x_1,z_1)}{\phi_{\tau-s}(x_0,z_0)}\mu^{\mathbb P}_{t-s}(x_0,z_0;x_1,z_1).
$$
The coordinate process $X$ under measure $\mathbb{Q}$ has the law denoted as $\p_X$ in \eqref{eq:Bar-LeD}.
Let $0=t_0<t_1<t_2<\dots<t_n=\tau$. The joint density function  (under ${\mathbb Q}$) of $(X_{t_1}, X_{t_2},...,X_{t_n})$ (given that $X_0=I_0=x_0=z_0=0$) is
\begin{equation}
  \label{eq:density:X}
  \frac{ e^{-\A x_n}}{{\mathfrak K}_{\A,\C}^{(\tau)}} f^{\mathbb Q}_{\mathbf t}({\mathbf x})
\end{equation}
with
\begin{equation*}\label{eq:f}
f^{\mathbb Q}_{\mathbf t}({\mathbf x})=
\int_{0<z_1<z_2<\dots<z_n}  z_n^{-\A/2-\C/2} \prod\limits_{j=1}^n \mu^{\mathbb P}_{t_j-t_{j-1}}(x_{j-1},z_{j-1};x_j,z_j) \d z_1 \d z_2 \dots \d z_n.
\end{equation*}
Here we denoted ${\mathbf x}=(x_1,x_2,\dots,x_n)$ and ${\mathbf t}=(t_1,t_2,\dots,t_n)$.
Changing the variable of integration $z_j-z_{j-1}=w_j$ we have %
\begin{align*} %
f^{\mathbb Q}_{\mathbf t}({\mathbf x})&=
\int_{(0,\infty)^n}  (w_1+w_2+\dots +w_n)^{-\A/2-\C/2} \prod\limits_{j=1}^n \mu^{\mathbb P}_{t_j-t_{j-1}}(x_{j-1},0;x_j,w_j) \d w_1 \d w_2 \dots \d w_n\\ \nonumber
&=
\int_{(0,\infty)^n}  (w_1+w_2+\dots +w_n)^{-\A/2-\C/2}  \\ \nonumber
&\qquad \qquad \prod\limits_{j=1}^n \exp\Big(- \frac{e^{-2x_{j-1}}+e^{-2x_j}}{w_j}\Big)
 \theta\Big(2\frac{e^{-x_{j-1}-x_j}}{w_j},(t_j-t_{j-1})/2\Big) \frac{\d w_j}{w_j}.
\end{align*}
Next we change the variable of integration $\xi_j = 2(e^{-x_{j-1}-x_j})/w_j$ and we obtain
\begin{align*}%
f^{\mathbb Q}_{\mathbf t}({\mathbf x})&=2^{-(\A+\C)/2}
\int_{(0,\infty)^n}  \Big[ \sum\limits_{j=1}^ n \xi_j^{-1} e^{-x_{j-1}-x_j} \Big] ^{-\A/2-\C/2}  \\
& \qquad
\prod\limits_{j=1}^n \exp\Big(- \xi_j  \cosh(x_j-x_{j-1})\Big)
 \theta\Big(\xi_j,(t_j-t_{j-1})/2\Big) \frac{\d \xi_j}{\xi_j}.
\end{align*}

Next we consider a Markov process $Y$ %
with transition probabilities \eqref{Y:trans}
and the initial distribution of $Y_0$ is \eqref{Y0}. Let $0=t_0<t_1<t_2<\dots<t_n=\tau$. The joint distribution of $(Y_0, Y_{t_1},...,Y_{t_n})$ is given by
\begin{align}\label{Y:prejoint}
 \frac{e^{-\C y_0}}{C^{(\tau)}_{\A,\C}} \prod\limits_{j=1}^n p_{t_j-t_{j-1}}(y_{j-1};y_j) e^{-\A y_n} \d y_0 \d y_1 \dots \d y_n.
\end{align} %
Thus the joint density function of $(Y_{t_1}-Y_0,...,Y_{t_n}-Y_0)$ is
\begin{equation}
  \label{eq:density:Y-Y0}
  \frac{e^{-\A x_n}}{C^{(\tau)}_{\A,\C}} \tilde f_{\mathbf t}(\mathbf x)
\end{equation}
with
\begin{equation}\label{eq:tilde_f}
\tilde f_{\mathbf t}(\mathbf x)=\int_{\mathbb R} e^{-(\A+\C) y_0} \prod\limits_{j=1}^n p_{t_j-t_{j-1}}(x_{j-1}+y_0;x_j+y_0)  \d y_0.
\end{equation}
Here we assume that $x_0=0$.
The goal is to show that
\begin{equation}\label{eq:goal}
   \tilde f_{\mathbf t}(\mathbf x) =2^{\A+\C-1} \Gamma((\A+\C)/2) f^{\mathbb Q}_{\mathbf t}(\mathbf x)
\end{equation}
 for all ${\mathbf t}=(t_1,t_2,\dots,t_n)$ and
${\mathbf x}=(x_1,x_2,\dots,x_n)$.
Once \eqref{eq:goal} is established, the normalizing constants will satisfy \eqref{K2C} and hence the densities
\eqref{eq:density:X} and \eqref{eq:density:Y-Y0} will be identical.

It remains to prove \eqref{eq:goal}. To do so, we   express \eqref{eq:tilde_f} in terms of the  Hartman-Watson density function \eqref{Def-HW}.
From \cite[Remark 4.1]{MatsumotoYor2005I}, \cite[formula (50)]{SousaYakubovich2018}, or \cite[formula (7.4)]{Bryc-Kuznetsov-Wang-Wesolowski-2021}
we know that
\begin{align*}
\nonumber p_t(x,y)&=\int_{\mathbb R} \exp\Big(-\frac{1}{2}(e^{x-y-r}+e^{y-x-r}+e^{r-x-y})\Big)  \theta(e^{-r},t/2)d r
\\&=
\int_{0}^{\infty} \exp\Big(-\frac{1}{2}(\xi (e^{x-y}+e^{y-x})+\xi^{-1} e^{-x-y})\Big)  \theta(\xi,t/2)
\frac{d \xi}{\xi} \nonumber
\\&=\int_{0}^{\infty} \exp\Big(-\xi\cosh(x-y) -\tfrac12\xi^{-1} e^{-x-y}\Big)  \theta(\xi,t/2)
\frac{d \xi}{\xi}. %
\end{align*}
Once we plug the latter into \eqref{eq:tilde_f} we get
\begin{align*} %
\tilde f_{\mathbf t}(\mathbf x)&=\int_{\mathbb R}  e^{-(\A+\C) y_0} d y_0
\\ \nonumber & \int_{(0,\infty)^{n}} \prod\limits_{j=1}^n  \exp\Big(-\xi_j\cosh(x_j-x_{j-1})-\tfrac12 \xi_j^{-1} e^{-x_{j-1}-x_j-2y_0}\Big)  \theta(\xi_j,(t_j-t_{j-1})/2)
\frac{\d \xi_j}{\xi_j}.
\end{align*}
The integral  %
with respect to $y_0$ is
\begin{align*}
\int_{\mathbb R} \exp\Big(-(\A+\C)y_0- \tfrac12 e^{-2 y_0} D \Big)\d y_0=D^{-(\A+\C)/2}2^{(\A+\C)/2 -1} \Gamma((\A+\C)/2),
\end{align*}
where $D:=\sum\limits_{j=1}^n \xi_j^{-1} e^{-x_{j-1}-x_j}$.
Thus we obtain
\begin{align*} %
\tilde f_{\mathbf t}(\mathbf x)&=  2^{(\A+\C)/2-1} \Gamma((\A+\C)/2)
\int_{(0,\infty)^{n}}  \Big[\sum\limits_{j=1}^n \xi_j^{-1} e^{-x_{j-1}-x_j} \Big]^{-(\A+\C)/2}
\\ \nonumber
&  \prod\limits_{j=1}^n  \exp\Big(-\xi_j  \cosh(x_j-x_{j-1})\Big)  \theta(\xi_j,(t_j-t_{j-1})/2)
\frac{d \xi_j}{\xi_j}=2^{\A+\C-1} \Gamma((\A+\C)/2) f^{\mathbb Q}_{\mathbf t}(\mathbf x).
\end{align*}
 This establishes \eqref{eq:goal} and completes the proof.

\begin{remark}
An alternative method of proving Theorem \ref{Thm:Y2K} is outlined on page 8 in a recent paper \cite{Corwin2022}.
\end{remark}

\section{Proofs of limit Theorems}
 We begin with some known facts that we will use in several proofs.
We   note that  with $\widetilde Y_t:=Y_{\tau-t}\topp{\A,\C}$, Markov process $(\widetilde Y_t)_{t\in[0,\tau]}$  has the same law as process $(Y_t\topp {\C,\A})_{t\in[0,\tau]}$.
Therefore in some proofs without loss of generality we may assume $\A> 0$.
 We recall \cite[Theorem 3.3]{Bryc-Kuznetsov-Wang-Wesolowski-2021} in our notation/parametrization.  Specialized to the case   $\A>0$ it says that the normalizing constant \eqref{C(ac)} is given by expression
\begin{multline}\label{BKWW:T3.2}
    C_{\A,\C}\topp \tau = \frac{ 2^{\A+\C}}{8\pi}  \int_0^\infty e^{-\tau u^2/4} \frac{|\Gamma(\tfrac{\A+\i u}{2},\tfrac{\C+\i u}{2})|^2}{|\Gamma(\i u)|^2}\d u
  \\+    2^{\A+\C}  \frac{\Gamma(\frac{\C+\A}{2},\frac{\A-\C}{2})}{2\C \Gamma(-\C )}\sum_{\{k\geq 0:\; \C+2k<0 \}} e^{\tau(\C+2k)^2/4}(\C+2k)\frac{(\C,\frac{\A+\C}{2})_k}{k!(1+\frac{\C-\A}{2})_k}.
\end{multline}
From \eqref{C(ac)} is clear that  $C_{\C,\A}\topp \tau =C_{\A,\C}\topp \tau $. It is also clear that
function $(\A,\C,\tau)\mapsto C_{\A,\C}\topp \tau $ is continuous in the domain $\tau>0,\A+\C>0$ in $\r^3$.  In some proofs we will need to determine its behaviour near a point at the boundary.

We will  need small index asymptotics for the modified Bessel K function.
\begin{lemma}
   Assume that $\nu\ne 0$ is an arbitrary complex number. If $x>0$ then
   \begin{equation}
  \label{Alexey-sinh}
  \lim_{T\to +\infty} \frac1{T} K_{\nu/T} (e^{-x T})= \sinh (\nu x)/\nu.
\end{equation}
If $x\leq 0$ then
\begin{equation}
  \label{K20}
  \lim_{T\to +\infty} \frac1{T} K_{\nu/T} (e^{-x T})= 0.
\end{equation}
\end{lemma}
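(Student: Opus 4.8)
The plan is to use the integral representation \eqref{BesselK}, namely $K_{\nu/T}(x) = \int_0^\infty e^{-x\cosh w}\cosh(\nu w/T)\,\d w$ with $x = e^{-xT}$ (abusing notation; call the argument $a_T := e^{-xT}$), and to track how the integrand behaves after the natural substitution that stretches the exponentially small argument back to order one. First I would substitute $w = v + xT$ inside the cosh-weight when $x>0$, or more transparently substitute $s = e^{w}a_T$ (so that $e^{-a_T\cosh w}$ becomes essentially $e^{-(s + a_T^2/s)/2}$), converting the integral into one over $s\in(a_T,\infty)$ with $\cosh(\nu w/T) = \cosh\bigl(\tfrac{\nu}{T}\log(s/a_T)\bigr)$. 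The key observation is that $\tfrac1T\log(s/a_T) = x + \tfrac1T\log s$, so $\cosh(\nu w/T) \to \cosh(\nu x)$ pointwise as $T\to\infty$ on the relevant range, and $\tfrac1T$ times the remaining integral converges. Carrying the $1/T$ factor through carefully is what produces the $\sinh(\nu x)/\nu$ rather than $\cosh(\nu x)$: the extra power of $1/T$ comes from the fact that the $s$-integral of $e^{-(s+a_T^2/s)/2}\cdot(\text{slowly varying})$ over $(a_T,\infty)$ grows like $\log(1/a_T) = xT$ in the leading term, and more precisely $\int_{a_T}^{\infty}\cosh(\tfrac{\nu}{T}\log(s/a_T))e^{-s/2}\tfrac{\d s}{s}$-type expressions evaluate, after integrating the cosh against $\d s/s$ up to $O(1)$ corrections, to $\tfrac{T}{\nu}\sinh(\nu x) + o(T)$.

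Concretely, the cleanest route is probably to use the classical formula $K_\mu(a) = \tfrac12\int_0^\infty e^{-\tfrac{a}{2}(u + 1/u)}u^{\mu-1}\,\d u$ (equivalently $\tfrac12\int_{-\infty}^\infty e^{-a\cosh w}e^{\mu w}\,\d w$, which together with its $\mu\mapsto-\mu$ version gives \eqref{BesselK}). With $\mu = \nu/T$ and $a = e^{-xT}$, write $K_{\nu/T}(e^{-xT}) = \tfrac12 e^{-\nu x}\int_0^\infty \exp\bigl(-\tfrac12 e^{-xT}(u + 1/u)\bigr)u^{\nu/T - 1}\,\d u$... actually the substitution $u = e^{xT}r$ is what I want: then $e^{-xT}u = r$, the exponent becomes $-\tfrac12(r + e^{-2xT}/r)$, and $u^{\nu/T-1}\,\d u = e^{xT(\nu/T-1)}r^{\nu/T-1}e^{xT}\,\d r = e^{\nu x}r^{\nu/T-1}\,\d r$, giving $K_{\nu/T}(e^{-xT}) = \tfrac12\int_0^\infty \exp\bigl(-\tfrac12(r + e^{-2xT}/r)\bigr)r^{\nu/T - 1}\,\d r$. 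Now as $T\to\infty$ the term $e^{-2xT}/r$ vanishes (dominated-convergence-style control near $r = 0$ is needed since $r^{\nu/T-1}$ is nearly $r^{-1}$, non-integrable at $0$ — this is exactly where the $e^{-2xT}/r$ regularizer and the resulting $1/T$ scaling matter), and $\tfrac1T K_{\nu/T}(e^{-xT}) = \tfrac1{2T}\int_0^\infty e^{-r/2}r^{\nu/T-1}e^{-e^{-2xT}/(2r)}\,\d r$. Splitting at $r = 1$: on $(1,\infty)$ the integral is $O(1)$ so contributes $O(1/T)\to 0$; on $(0,1)$ one has $\int_0^1 r^{\nu/T-1}(1+o(1))\,\d r$, and the $r^{\nu/T-1}$ integral over $(0,1)$ equals $\tfrac{T}{\nu}$ when $\re(\nu)>0$ — but the lower cutoff is effectively at $r \sim e^{-2xT}$ because of the $e^{-e^{-2xT}/(2r)}$ factor, so it is really $\int_{\sim e^{-2xT}}^1 r^{\nu/T-1}\,\d r = \tfrac{T}{\nu}\bigl(1 - e^{-\nu\cdot 2x}\bigr) + o(T) \cdot(\text{const})$; hmm, I need the $e^{-r/2}$ factor too, but near the cutoff $e^{-r/2}\approx 1$. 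Multiplying by $\tfrac1{2T}$ gives $\tfrac{1}{2\nu}(1 - e^{-2\nu x})$... which is not quite $\sinh(\nu x)/\nu$.

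I see the discrepancy comes from the factor $e^{-\nu x}$ I dropped, or from symmetrizing: the correct statement uses $K_{\nu/T} = K_{-\nu/T}$, so $\tfrac1T K_{\nu/T}(e^{-xT}) = \tfrac12\bigl(\tfrac1T K_{\nu/T} + \tfrac1T K_{-\nu/T}\bigr)$, and the above computation applied to both $\pm\nu$ (the $-\nu$ case requiring $\re(\nu)<0$ orientation, handled by the same substitution with the cutoff now governing the upper behaviour) yields $\tfrac12\bigl(\tfrac{1}{2\nu}(1-e^{-2\nu x}) + \tfrac{1}{2\nu}(e^{2\nu x}-1)\bigr) = \tfrac{1}{4\nu}(e^{2\nu x} - e^{-2\nu x}) = \tfrac{\sinh(2\nu x)}{2\nu}$ — still off by a rescaling, signalling I should re-examine the substitution constant (the factor $2$ in $e^{-2xT}$ versus $e^{-xT}$). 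Rather than commit to the constant-chasing here, the structural plan is firm: (i) pass to the Bromwich/Gamma-type integral for $K$; (ii) rescale the integration variable by $e^{xT}$ to move the singularity; (iii) observe the $1/T$ normalization converts the $\log$-divergence of $\int r^{\nu/T-1}$ near $0$ into a finite limit; (iv) symmetrize in $\nu\leftrightarrow -\nu$ to get the $\sinh$; (v) in the case $x\le 0$, the same rescaling pushes the mass to $r\to\infty$ where $e^{-r/2}$ kills it, forcing the $1/T$-scaled limit to be $0$ — more simply, for $x<0$ the argument $e^{-xT}\to\infty$ and $K_\mu(y)\sim\sqrt{\pi/(2y)}e^{-y}$ decays superexponentially so $\tfrac1T K_{\nu/T}(e^{-xT})\to 0$ trivially, while for $x = 0$ one has $K_{\nu/T}(1)\to K_0(1)$ a finite constant, so divided by $T$ it tends to $0$. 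The main obstacle is purely the bookkeeping of constants and the uniform integrability near $r = 0$ in step (iii); the limit itself and its $\sinh$ form are forced by the symmetry $K_\mu = K_{-\mu}$ together with the elementary identity $\lim_{T\to\infty}\tfrac1T\int_{c^T}^1 r^{\nu/T - 1}\,\d r = \tfrac{1 - c^{\nu}}{\nu}$ (for $c\in(0,1)$, $\re\nu>0$), which I would isolate as a one-line sublemma and then apply with $c = e^{-x}$ (up to whatever power the correct substitution dictates), handling the $\re\nu \le 0$ and imaginary-$\nu$ cases by analytic continuation in $\nu$ after establishing the bound $\tfrac1T|K_{\nu/T}(e^{-xT})| \le C$ locally uniformly.
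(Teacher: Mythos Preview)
Your approach is genuinely different from the paper's. The paper does not manipulate the integral representation \eqref{BesselK} at all; instead it inverts the Mellin transform $\int_0^\infty K_\nu(x)x^{s-1}\d x = 2^{s-2}\Gamma((s+\nu)/2)\Gamma((s-\nu)/2)$, shifts the contour from $\re(s)=c>|\re\nu|$ to $\re(s)=-1/2$, and collects the two residues at $s=\pm\nu$. This yields the uniform approximation
\[
\Bigl|K_\nu(x)-\tfrac12\bigl(\Gamma(\nu)(x/2)^{-\nu}+\Gamma(-\nu)(x/2)^\nu\bigr)\Bigr|<\tilde C\,x^{1/2}
\]
for all $x>0$ and all $|\nu|<1/4$. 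Plugging in $\nu/T$ and $e^{-xT}$ and using $\Gamma(z)\sim 1/z$ gives \eqref{Alexey-sinh} in one line, with the $\sinh$ appearing automatically from the two residues. The paper's route buys a clean error bound uniform in both $x$ and $\nu$, which is convenient for the later dominated-convergence arguments.

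Your substitution route can be made to work, but the source of your constant trouble is concrete: after $u=e^{xT}r$ you correctly compute $u^{\nu/T-1}\d u = e^{\nu x}r^{\nu/T-1}\d r$ and then silently drop the $e^{\nu x}$. Restore it and there is no need to symmetrize: the effective integral $\tfrac{e^{\nu x}}{2T}\int_{e^{-2xT}}^{1} r^{\nu/T-1}\d r = \tfrac{e^{\nu x}}{2\nu}\bigl(1-e^{-2\nu x}\bigr) = \sinh(\nu x)/\nu$ on the nose. Your attempted repair via $K_{\nu/T}=K_{-\nu/T}$ is circular, since applying the same substitution with $-\nu$ moves the cutoff and reproduces the same missing prefactor. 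What remains to justify in your scheme is that the three discarded pieces --- the tail $\int_1^\infty$, the region $r\in(0,e^{-2xT})$ where $e^{-e^{-2xT}/(2r)}$ is small, and the replacement of $e^{-r/2}$ by $1$ on $(e^{-2xT},1)$ --- are each $o(T)$; this is routine but does require an argument, particularly near the lower cutoff where the integrand is only controlled by the Gaussian-type factor $e^{-e^{-2xT}/(2r)}$. Your treatment of $x\le 0$ (large-argument decay for $x<0$, and $K_{\nu/T}(1)\to K_0(1)$ for $x=0$) is fine and matches the paper's.
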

\begin{proof}We know  (\cite[6.8 (26)]{erdelyi1954fg}) that the Mellin transform of Bessel $K$-function is
\begin{equation}
  \label{E6.8(26)} \int_0^{\infty} K_{\nu}(x) x^{s-1} \d x=2^{s-2} \Gamma((s+\nu)/2) \Gamma((s-\nu)/2), \;\;\; \re(s)> \vert\re(\nu)\vert.
\end{equation}
Thus with $x>0$ we can write
\begin{equation*}\label{eq:Besel_K_inverse_Mellin}
K_{\nu}(x)=\frac{1}{8 \pi \i} \int_{c+\i \r}
\Gamma((s+\nu)/2) \Gamma((s-\nu)/2) (x/2)^{-s} \d s,
\end{equation*}
where $c$ is any positive number greater than $\vert \re(\nu)\vert$.
  We shift the contour of integration $c+\i \r \mapsto -1/2+\i \r$. We pick up two residues at poles $s=\pm \nu$, coming from Gamma functions, compute the residues at these poles and obtain
\begin{equation}\label{eq:Besel_K_inverse_Mellin2}
K_{\nu}(x)=\frac{1}{2} \Big( \Gamma(\nu) (x/2)^{-\nu}+
\Gamma(-\nu) (x/2)^{\nu}\Big)+
\frac{1}{8 \pi \i} \int_{-1/2+\i \r}
\Gamma((s+\nu)/2) \Gamma((s-\nu)/2) (x/2)^{-s} \d s.
\end{equation}
In the strip $-3/8<\re(z)<-1/8$ we have a uniform bound $|\Gamma(z)|<C e^{-\frac{\pi}{4} |\im(z)|}$ for some absolute constant $C$ (this is true for any strip of finite width which does not contain the poles of the Gamma function and it follows from Stirling's asymptotic approximation to the Gamma function), thus we can estimate the integral in the right-hand side of \eqref{eq:Besel_K_inverse_Mellin2} as follows
 \begin{equation*}
\bigg \vert \int_{-1/2+\i \r}
\Gamma((s+\nu)/2) \Gamma((s-\nu)/2) (x/2)^{-s} \d s \bigg \vert \le C x^{1/2} \int_{\r} e^{-\frac{\pi}{4} |t|} \d t= x^{1/2}\frac{8 C}{\pi}
 \end{equation*}
 and thus we obtain the following result:
 \begin{equation*}\label{Alexey-K}
 \bigg \vert K_{\nu}(x)- \frac{1}{2} \Big( \Gamma(\nu) (x/2)^{-\nu}+
\Gamma(-\nu) (x/2)^{\nu}\Big) \bigg \vert < \tilde C x^{1/2},
 \end{equation*}
 which is valid for all $x>0$, $\nu \in {\mathbb C}\setminus\{0\}$ with $|\nu|<1/4$ and some absolute constant $\tilde C$.

Since $\Gamma(z)\sim 1/z$ as $|z|\to 0$, this implies that for $x>0$, and  complex $\nu\ne 0$
\begin{equation*}
  \label{Alexey-sinh-a}
  \lim_{T\to +\infty} \frac1{T} K_{\nu/T} (e^{-x T})= \lim_{T\to +\infty} \frac{1}{2 \nu}
\Big( e^{\nu x} \frac{T}{\nu}  \Gamma(\nu/T) 2^{\nu/T}+
e^{-\nu x   }\frac{T}{\nu} \Gamma(-\nu/T) 2^{-\nu/T}\Big)=\sinh (\nu x)/\nu.
\end{equation*}

On the other hand,
for $x \leq 0$,   complex $\nu\ne 0$ and large enough $T$, we have
\begin{equation*}
  \label{K20-a}
   |K_{\nu/T} (e^{-x T})| \leq  K_{|\nu/T|} (e^{-x T})\leq K_{1/2} (e^{-x T}) =
\sqrt{\pi/2}\frac{\exp(-e^{-X T})}{   e^{-x T/2}},
\end{equation*}
which gives \eqref{K20}.
\end{proof}

We will also need an explicit formula for the Doob's $h$-transform \eqref{Ht},
 which relies on analytic extension from $\A>0$.

\begin{lemma}
  \label{Lem:Alexey}
  For   $\A\in\r$, $0\leq t<\tau$ and  $x\in\r$ the expression \eqref{Ht} is finite and is given by
  \begin{multline}
    \label{Ht-explicit}
    H_t(x) =H_t\topp \tau(x):=
    \frac{2^\A}{2\pi} \int_0^\infty e^{ -(\tau-t) u^2/4} K_{\i u}(e^{-x})\frac{ |\Gamma((\A+\i u)/2)|^2 }{|\Gamma(\i u)|^2} \d u
\\  +2^{\A+1}\sum\limits_{k\ge 0 \;  : \; a+2k<0}
  e^{(\tau-t) (\A+2k)^2/4} K_{\A+2k}(e^{-x})\frac{1}{\Gamma(-\A-2k)}.
  \end{multline}
\end{lemma}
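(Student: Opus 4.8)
The idea is to substitute the spectral representation \eqref{pt} for $p_{\tau-t}$ into \eqref{Ht} and carry out the $y$-integration; this is essentially the computation behind \eqref{BKWW:T3.2}. For the finiteness claim and the regime $\A>0$: the Hartman--Watson kernel $\theta(\xi,s/2)$ vanishes faster than any power of $\xi$ as $\xi\to 0^+$, and in \eqref{Yak-Hart-Wat} the factor $\exp(-\xi\cosh(x-y))$ forces the $\xi$-integral to localize at $\xi\to 0$ when $y\to+\infty$, so $p_s(x,y)$ decays faster than every exponential as $y\to+\infty$ and like $\exp(-e^{-y})$ as $y\to-\infty$; hence the integral in \eqref{Ht} converges absolutely for every real $\A$. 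For $\A>0$ we may apply Fubini: by the substitution $z=e^{-y}$ and the Mellin transform \eqref{E6.8(26)}, $\int_\r e^{-\A y}K_{\i u}(e^{-y})\,\d y=\int_0^\infty z^{\A-1}K_{\i u}(z)\,\d z=2^{\A-2}\Gamma(\tfrac{\A+\i u}{2})\Gamma(\tfrac{\A-\i u}{2})=2^{\A-2}|\Gamma(\tfrac{\A+\i u}{2})|^2$, and feeding this back into \eqref{pt} produces precisely the first term of \eqref{Ht-explicit}, the index set $\{k\ge 0:\A+2k<0\}$ being empty.

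For $\A\le 0$ the inner $y$-integral above diverges at $z=0$, so one cannot interchange $\d u$ and $\d y$ directly. Instead, before the interchange one replaces $K_{\i u}(e^{-y})$ in \eqref{pt} by the Mellin--Barnes integral of \eqref{eq:Besel_K_inverse_Mellin} with the $s$-contour pushed to the left past the poles $s=\pm\i u-2k$ with $\A+2k<0$. This peels off finitely many elementary terms proportional to $e^{(\pm\i u-2k)y}$, against which the $y$-integration collapses the subsequent $u$-integration onto the points $u=\pm\i(\A+2k)$, plus a remainder for which Fubini is now legitimate and which reproduces the first term of \eqref{Ht-explicit} just as in the case $\A>0$. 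At the collapse point $u=\i(\A+2k)$ one has $K_{\i u}(e^{-x})=K_{\A+2k}(e^{-x})$ and $e^{-(\tau-t)u^2/4}=e^{(\tau-t)(\A+2k)^2/4}$, while the residue $(-1)^k/k!$ of $\Gamma$ at $-k$, together with $\Gamma(\tfrac{\A-\i u}{2})=\Gamma(\A+k)$, $\Gamma(\i u)\Gamma(-\i u)=\Gamma(-\A-2k)\Gamma(\A+2k)$ and the standard functional equations for $\Gamma$, simplifies the coefficient of $K_{\A+2k}(e^{-x})$ to the form displayed in \eqref{Ht-explicit}. Summing over the relevant $k$ yields the discrete part, and the number of discrete terms is exactly the number of poles crossed, so one obtains the formula directly for each $\A$ without a separate continuity argument.

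The step that requires real care is this collapse: the poles $u=\pm\i(\A+2k)$ travel through the \emph{endpoint} $u=0$ of the integration contour in \eqref{pt} as $\A$ passes $-2k$, so one must use that the spectral weight $1/|\Gamma(\i u)|^2=u\sinh(\pi u)/\pi$ vanishes to second order at $u=0$ in order to control the integrand there and to read off the residue contribution with the correct constant; this is precisely the mechanism that produces the discrete sums in \eqref{BKWW:T3.2}, and everything else is routine bookkeeping of constants. An alternative that sidesteps the $u$-integral is to work from \eqref{Yak-Hart-Wat}: the $y$-integral is then evaluated in closed form via $\int_0^\infty v^{\A-1}e^{-av-b/v}\,\d v=2(b/a)^{\A/2}K_\A(2\sqrt{ab})$ (valid for all real $\A$), giving a single Hartman--Watson integral for $H_t(x)$, which one re-expands using the Laplace-transform characterization \eqref{Def-HW} of $\theta$ to recover \eqref{Ht-explicit}.
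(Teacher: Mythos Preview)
For $\A>0$ your argument is the same as the paper's: evaluate $\int_\r e^{-\A y}K_{\i u}(e^{-y})\,\d y$ by \eqref{E6.8(26)} and apply Fubini. For $\A\le 0$ the paper does something different and cleaner: it views the integral term on the right of \eqref{Ht-explicit} as an analytic function of complex $\A$ with $\re(\A)>0$, rewrites the $u$-integral as a contour integral over $w\in\i\r$, and analytically continues to $\re(\A)<0$ by repeatedly shifting this $w$-contour across the poles $w=\pm(\A+2k)$ of $\Gamma(\tfrac{\A\pm w}{2})$, the collected residues becoming the discrete sum. A short appeal to analyticity of Laplace transforms then identifies the continuation with the original integral \eqref{Ht} for every real $\A$.

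Your direct route for $\A\le 0$ has a real gap at the ``collapse'' step. After peeling off a term proportional to $e^{(\pm\i u-2k)y}$ from $K_{\i u}(e^{-y})$, you propose to integrate it against $e^{-\A y}$ over $y\in\r$ and have this pin $u$ to $\pm\i(\A+2k)$. But $\int_\r e^{(-\A\pm\i u-2k)y}\,\d y$ diverges for every real $u$ (indeed $\re(-\A-2k)>0$ exactly on the index set you are summing over), so there is no honest $y$-integration to perform first. The only way to make sense of the localization is to deform in the $u$-variable \emph{before} touching $y$---which is precisely the paper's contour shift, and then your extra $s$-contour and the peeling are superfluous. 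The bookkeeping you sketch confirms this: the identities $\Gamma(\tfrac{\A-\i u}{2})=\Gamma(\A+k)$, $\Gamma(\i u)\Gamma(-\i u)=\Gamma(-\A-2k)\Gamma(\A+2k)$ are evaluations of the spectral integrand at $u=\i(\A+2k)$, i.e.\ residues from a deformed $u$-contour, not outputs of a $y$-integral. Your Hartman--Watson alternative does yield a convergent single-integral formula for $H_t(x)$ valid for all real $\A$, but ``re-expand using \eqref{Def-HW}'' hides all the work: \eqref{Def-HW} is a Laplace transform in the time variable, not a spectral decomposition, and extracting the discrete terms from it again requires an argument equivalent to the paper's contour shift.
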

\begin{proof}
We fix $\tau>0$ and $x\in \r$. For complex argument $\re(\A)>0$ we define
\begin{equation}\label{eq:G_tau_a0}
G_{\tau}(\A)=\frac{1}{2\pi \i} \int_{\i\r} e^{ \tau w^2/4} K_w(e^{-x})\frac{ \Gamma((\A+w)/2) \Gamma((\A-w)/2)}{\Gamma(w)\Gamma(-w)} \d w.
\end{equation}

 The   integral \eqref{eq:G_tau_a0} converges absolutely: the function $w\mapsto K_{w} (e^{-x})$ is bounded in any strip $w_1<\re(w)<w_2$, the ratio of the gamma functions grows at most as  an exponential function of $|w|$, thus the quadratic exponential factor $e^{\tau w^2/4}$ guarantees convergence.

We use the same method of analytic continuation of
\eqref{BKWW:T3.2} as was used in \cite{Bryc-Kuznetsov-Wang-Wesolowski-2021}. The poles of the integrand occur at points $\pm(\A+2k)$ for $k=0,1,2,\dots$. First we assume that $0<\re(\A)<1$ and shift the contour of integration $\i \r \mapsto 1+ \i \r$, collecting the pole at $w=\A$:
\begin{equation*}\label{eq:G_tau_a1}
G_{\tau}(\A)=\frac{1}{2\pi \i} \int_{1+\i\r} e^{ \tau w^2/4} K_w(e^{-x})\frac{ \Gamma((\A+w)/2) \Gamma((\A-w)/2)}{\Gamma(w)\Gamma(-w)} \d w
+2 e^{\tau \A^2/4} K_{\A}(e^{-x})\frac{1}{\Gamma(-\A)}.
\end{equation*}
The above expression provides analytic continuation in the strip $-1<\re(\A)<1$.
(Recall the convention $1/\Gamma(0)=0$.)
Now we assume that $-1<\re(\A)<0$ and shift the contour of integration back to $\i \r$. We collect the residue at $w=-\A$ and obtain
\begin{equation*}\label{eq:G_tau_a2}
G_{\tau}(\A)=\frac{1}{2\pi \i} \int_{\i\r} e^{ \tau w^2/4} K_w(e^{-x})\frac{ \Gamma((\A+w)/2) \Gamma((\A-w)/2)}{\Gamma(w)\Gamma(-w)} \d w
+4 e^{\tau \A^2/4} K_{\A}(e^{-x})\frac{1}{\Gamma(-\A)}.
\end{equation*}
The above equation gives analytic continuation in the strip $-2<\re(\A)<0$. Continuing this process we obtain an expression for any $\A<0$
\begin{align}\label{eq:G_tau_a3}
G_{\tau}(\A)&=\frac{1}{2\pi \i} \int_{\i\r} e^{ \tau w^2/4} K_w(e^{-x})\frac{ \Gamma((\A+w)/2) \Gamma((\A-w)/2)}{\Gamma(w)\Gamma(-w)} \d w
\\ \nonumber
&+\sum\limits_{k\ge 0 \;  : \; \A+2k<0}
4 e^{\tau (\A+2k)^2/4} K_{\A+2k}(e^{-x})\frac{1}{\Gamma(-\A-2k)}.
\end{align}
Note that the integral \eqref{Ht} is well defined   and  by \eqref{E6.8(26)} formula \eqref{Ht-explicit} holds for  $\re(\A)>0$. We see that
 $H_t(x)=2^{\A-1} G_{\tau-t}(\A)$. Thus formula \eqref{eq:G_tau_a3} shows that for fixed $0\leq t<\tau$ and $x\in\r$, function $\A\mapsto 2^{\A-1} G_{\tau-t}(\A)$ is an analytic extension of \eqref{Ht} to complex plane. Since as a function of $\A$, expression \eqref{Ht} is a Laplace transform when $\re (\A)>0$, its analytic extension \eqref{Ht-explicit} is the Laplace transform of the same non-negative function. (For a version of this fact in the language of analytic characteristic functions, see \cite[Theorem 2]{lukacs1952}.)
\end{proof}

\subsection{Proof of Theorem \ref{T:halfL}}\label{Sect:T:halfL}
We prove convergence of finite dimensional densities by establishing pointwise  convergence of the initial densities, and pointwise
 convergence of the transition densities.

 \subsubsection{Convergence of initial densities}
Since $\A>0$, from \eqref{Ht-explicit} we get
\begin{equation*}\label{H0}
H_0(x)= \frac{2^\A}{2\pi }\int_0^\infty K_{\i u}(e^{-x})e^{-\tau u^2/4} \frac{|\Gamma(\A+\i u)/2|^2}{|\Gamma(\i u)|^2}\d u.
\end{equation*}
So $Y_0\topp{\A,\C}$ has  density
\begin{multline}\label{f}
   f(x)=\frac{2^\A}{2\pi C_{\A,\C}\topp \tau}e^{-\C x}\int_0^\infty K_{\i u}(e^{-x})e^{-\tau u^2/4} \frac{|\Gamma(\A+\i u)/2|^2}{|\Gamma(\i u)|^2}\d u
   \\= \frac{2^\A}{2\pi C_{\A,\C}\topp \tau}e^{-\C x}\int_0^\infty K_{\i v/\sqrt{\tau}}(e^{-x})e^{-v^2/4} \frac{|\Gamma(\A+\i v/\sqrt{\tau})/2|^2}{\sqrt{\tau}|\Gamma(\i v/\sqrt{\tau})|^2}\d v.
\end{multline}
Clearly, $|\Gamma(\A+\i v/\sqrt{\tau})/2|^2\to \Gamma(\A/2)^2$ and  $K_{\i v/\sqrt{\tau}}(e^{-x})\to K_{0}(e^{-x})$  as $\tau\to\infty$.
 It is well known  %
  that
 \begin{equation}
   \label{limit:Gamma}
   \lim_{\tau\to\infty}\frac{\tau}{|\Gamma(\i v/\sqrt{\tau})|^2}=v^2,
 \end{equation}
  and it is clear that we can pass to the limit under the integral sign. So
  \begin{multline*}
     \lim_{\tau\to\infty} f(x)=
   \frac{2^{a}\Gamma(\A/2)^2}{2\pi \lim_{\tau\to\infty}\tau^{3/2} C_{\A,\C}\topp \tau}
  e^{-\C x}  K_0(e^{-x}) \int_0^\infty v^2 e^{-v^2/4}\d v \\ = \frac{2^{a}\Gamma(\A/2)^2}{\sqrt{\pi} \lim_{\tau\to\infty}\tau^{3/2} C_{\A,\C}\topp \tau}
  e^{-\C x}  K_0(e^{-x}).
  \end{multline*}
  To end the proof, we note the following.
\begin{lemma}\label{Lem:C}
$$\lim_{\tau\to\infty}\tau^{3/2} C_{\A,\C}\topp \tau =\begin{cases}
 \frac{1}{4 \sqrt{\pi}}  2^{\A+\C} \Gamma(\A/2)^2\Gamma(\C/2)^2  & \C\geq 0, \\
 \infty   & \C<0 .
\end{cases} $$
\end{lemma}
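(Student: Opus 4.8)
The plan is to argue directly from the closed form \eqref{BKWW:T3.2} of $C_{\A,\C}\topp\tau$ (valid in the regime $\A>0$ we are in), isolating in each case the piece of that expression that governs the asymptotics. The rescaling $u=v/\sqrt\tau$ is what matches the normalization $\tau^{3/2}$ against the integral term, while an elementary exponential-order estimate handles the discrete sum.

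Consider first $\C>0$. Then $\{k\ge0:\C+2k<0\}$ is empty, so $C_{\A,\C}\topp\tau$ equals just the integral term of \eqref{BKWW:T3.2}; substituting $u=v/\sqrt\tau$ gives
\begin{equation}\label{eq:CLem-scaled}
\tau^{3/2}C_{\A,\C}\topp\tau=\frac{2^{\A+\C}}{8\pi}\int_0^\infty e^{-v^2/4}\,\bigl|\Gamma(\tfrac{\A+\i v/\sqrt\tau}{2})\bigr|^2\bigl|\Gamma(\tfrac{\C+\i v/\sqrt\tau}{2})\bigr|^2\,\frac{\tau}{|\Gamma(\i v/\sqrt\tau)|^2}\,\d v .
\end{equation}
I would then pass to the limit under the integral sign: by continuity of $\Gamma$ (here $\A,\C>0$) the Gamma factors converge to $\Gamma(\A/2)^2\Gamma(\C/2)^2$, and by \eqref{limit:Gamma} the remaining factor converges to $v^2$. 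For the domination I would use $|\Gamma(\sigma+\i t)|\le\Gamma(\sigma)$ for $\sigma>0$ together with $|\Gamma(\i y)|^{-2}=y\sinh(\pi y)/\pi$ and the fact that $s\mapsto s\sinh(c/s)$ is decreasing (because $\sinh x<x\cosh x$ for $x>0$); these bound the integrand of \eqref{eq:CLem-scaled}, for all $\tau\ge1$, by the integrable function $e^{-v^2/4}\Gamma(\A/2)^2\Gamma(\C/2)^2\,v\sinh(\pi v)/\pi$. Dominated convergence and $\int_0^\infty v^2e^{-v^2/4}\d v=2\sqrt\pi$ then give the claimed value $\tfrac{2^{\A+\C}\Gamma(\A/2)^2\Gamma(\C/2)^2}{4\sqrt\pi}$. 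The boundary case $\C=0$ uses the same representation \eqref{eq:CLem-scaled}, except that now $|\Gamma(\tfrac{\i v/\sqrt\tau}{2})|^2\to\infty$ for each $v>0$, so the integrand diverges pointwise and Fatou's lemma forces $\tau^{3/2}C_{\A,0}\topp\tau\to\infty$, consistent with reading $\Gamma(0)^2=\infty$ in the stated formula.

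Now let $\C<0$. Then the sum in \eqref{BKWW:T3.2} is a nonempty finite sum whose $k=0$ term equals $\C e^{\tau\C^2/4}$, and every other admissible index satisfies $0>\C+2k>\C$, so $(\C+2k)^2<\C^2$ and those terms are $o(e^{\tau\C^2/4})$. The integral term of \eqref{BKWW:T3.2} is nonincreasing in $\tau$ and finite at $\tau=1$, hence bounded. Therefore
$$C_{\A,\C}\topp\tau=O(1)+2^{\A+\C}\,\frac{\Gamma(\tfrac{\A+\C}{2})\,\Gamma(\tfrac{\A-\C}{2})}{2\,\Gamma(-\C)}\,e^{\tau\C^2/4}\bigl(1+o(1)\bigr)\qquad(\tau\to\infty),$$
and the constant multiplying $e^{\tau\C^2/4}$ is strictly positive since $\A+\C>0$, $\A-\C>0$ and $-\C>0$. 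Hence $C_{\A,\C}\topp\tau$ grows exponentially, and a fortiori $\tau^{3/2}C_{\A,\C}\topp\tau\to\infty$.

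I expect the main obstacle to be the uniform domination in \eqref{eq:CLem-scaled}: one must control $\tau/|\Gamma(\i v/\sqrt\tau)|^2$ simultaneously for all $\tau\ge1$ and all $v$, and it is here that the exact evaluation $|\Gamma(\i y)|^2=\pi/(y\sinh(\pi y))$ and the monotonicity of $s\mapsto s\sinh(c/s)$ are needed, since a cruder bound grows like $e^{\pi v\sqrt\tau}$ and fails to be integrable against $e^{-v^2/4}$ uniformly in $\tau$. A minor secondary point is checking finiteness of the integral term at $\tau=1$ when $\C$ is a negative even integer, where the apparent pole of $\Gamma(\tfrac{\C+\i u}{2})$ at $u=0$ is cancelled by the zero of $1/\Gamma(\i u)$, so integrability near $u=0$ is preserved.
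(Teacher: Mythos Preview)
Your proof is correct and follows essentially the same route as the paper: invoke \eqref{BKWW:T3.2}, rescale $u=v/\sqrt\tau$ in the integral term to produce \eqref{C*}, pass to the limit via \eqref{limit:Gamma}, and for $\C<0$ observe that the $k=0$ term of the discrete sum dominates and grows exponentially. You supply an explicit integrable majorant (via $|\Gamma(\i y)|^{-2}=y\sinh(\pi y)/\pi$ and the monotonicity of $s\mapsto s\sinh(c/s)$) where the paper simply asserts that one may pass to the limit under the integral sign, and you treat the boundary case $\C=0$ separately by Fatou; these are welcome refinements but not a different argument.
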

\begin{proof}
We use \eqref{BKWW:T3.2}. It is clear that the integral is non-negative, and that if $\C<0$ then the discrete sum diverges to $+\infty$ as $\tau\to\infty$, because  the first term of the sum dominates. This gives the second limit in the statement.

If $\C\geq 0$, then there is no discrete sum in \eqref{BKWW:T3.2}, only the integral. We change the variable of integration and write
\begin{equation}
  \label{C*}
  \tau^{3/2} C_{\A,\C}\topp \tau = \frac{2^{\A+\C}}{8\pi}  \int_0^\infty e^{-v^2/4} |\Gamma(\tfrac{\A+\i v/\sqrt{\tau}}{2},\tfrac{\C+\i v/\sqrt{\tau}}{2})|^2\frac{\tau}{|\Gamma(\i v/\sqrt{\tau})|^2}\d v.
\end{equation}
Passing to the limit under the integral sign, from \eqref{limit:Gamma} we get the answer.
\end{proof}

\subsubsection{Convergence of transition densities}
For $\A>0$, from \eqref{Ht-explicit},    we get
\begin{multline*}\label{Ht:expand}
H_t(x)= \frac{2^\A}{2\pi }\int_0^\infty K_{\i u}(e^{-x})e^{-(\tau-t) u^2/4} \frac{|\Gamma(\frac{\A+\i u}{2})|^2}{\Gamma(\i u)|^2}\d u
\\= \frac{2^\A}{2\pi \sqrt{\tau} }\int_0^\infty K_{\i v/\sqrt{\tau}}(e^{-x})e^{-(1-t/\tau) v^2/4} \frac{|\Gamma(\frac{\A+\i v/\sqrt{\tau}}{2})|^2}{\Gamma(\i v/\sqrt{\tau})|^2}\d v,
\end{multline*}
where we used \eqref{E6.8(26)}.
As before,   passing to the limit under the integral sign, for any fixed $0\leq s<t$ and $x,y\in\r$ we get
$$
\lim_{\tau\to\infty}\frac{H_t(y)}{H_s(x)}= \frac{K_0(e^{-y})}{K_0(e^{-x})}.
$$
Therefore the transition probabilities converge (pointwise) to \eqref{Z:trans}. \qed

\subsection{Proof of Theorem \ref{Thm:KPZ:fixed}} \label{Sect:KPZ:fixed} Passing to the process $(Y_{\tau-t})_{t\in [0,\tau]}$ if necessary, without loss of generality  we may assume that $\A>0$.
Recall \eqref{Y:prejoint}.
For $0=t_0<t_1<\dots<t_n=1$, the joint density of vector $\frac{1}{\sqrt{\tau}}\left(Y_{\tau t_j}\topp{\A/\sqrt{\tau},\C/\sqrt{\tau}} \right)_{j=0,\dots,n}$  is
\begin{equation}
  \label{Y:joint}\frac{\sqrt{\tau}}{C_{\A/\sqrt{\tau},\C/\sqrt{\tau}}\topp \tau} e^{-\C x_0}H_0(x_0/\sqrt{\tau})e^{-\A x_n} \prod_{k=1}^n \sqrt{\tau}p_{\tau(t_k-t_{k-1})}(\sqrt{\tau} x_{k-1},\sqrt{\tau}x_k).
\end{equation}
Since the joint density of $\left(\widetilde \eta_{t_j}\topp{\A,\C} \right)_{j=0,\dots,n}$  is of similar product form
\begin{equation}
  \label{eta:joint}
  \frac{1}{\mathfrak C_{\A,\C}}e^{-\C x_0} h_0(x_0) 1_{x_0>0} e^{-\A x_n}\prod_{k=1}^n g_{t_{k-1}-t_k}(x_{k-1},x_k)1_{x_k>0},
\end{equation}
we only need to prove convergence of the corresponding factors.  With $\A>0$, the part of the first factor  that depends on $\tau$ is
\begin{multline}\label{f:tau}
f_\tau(x):= \frac{\sqrt{\tau}}{C_{\A/\sqrt{\tau},\C/\sqrt{\tau}}\topp \tau}  H_0(x/\sqrt{\tau}) =  \frac{\sqrt{\tau}2^{\A/\sqrt{\tau}}}{2\pi C_{\A/\sqrt{\tau},\C/\sqrt{\tau}}\topp \tau}
\int_0^\infty K_{\i u}(e^{-x\sqrt{\tau}})e^{-\tau u^2/4} \frac{|\Gamma(\frac{\A/\sqrt{\tau}+\i u}{2})|}{\Gamma(\i u)|^2}\d u
\\=2^{\A/\sqrt{\tau}} \frac{\sqrt{\tau}}{2\pi C_{\A/\sqrt{\tau},\C/\sqrt{\tau}}\topp \tau}
\int_0^\infty \frac{K_{\i v/\sqrt{\tau}}(e^{-x\sqrt{\tau}})}{\sqrt{\tau}}e^{-v^2/4} \frac{|\Gamma(\frac{\A +\i v}{2\sqrt{\tau}})|}{\Gamma(\i v/\sqrt{\tau})|^2}\d v,
\end{multline}
 compare \eqref{f}.
 In the next lemmas, we verify that the integral and the multiplicative constant in the above expression converge.

 Recall \eqref{C:norm:eta}. We have
 \begin{lemma}\label{Llem:C2}  %
 $$
 \lim_{\tau\to\infty}  \frac{  C_{\A/\sqrt{\tau},\C/\sqrt{\tau}}\topp \tau}{\sqrt{\tau}} =  \mathfrak C_{\A,\C}.$$
\end{lemma}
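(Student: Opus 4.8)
The plan is to reduce Lemma \ref{Llem:C2} to the asymptotics of $C_{\A,\C}\topp\tau$ already recorded in the paper, but now with the parameters themselves shrinking at rate $1/\sqrt\tau$. The starting point is the exact expression \eqref{BKWW:T3.2} for $C_{\A/\sqrt\tau,\C/\sqrt\tau}\topp\tau$ (valid since we may assume $\A>0$, hence $\A/\sqrt\tau>0$ for large $\tau$). Since $\A/\sqrt\tau>0$ and $\C/\sqrt\tau>0$ eventually (note $\A+\C>0$ does \emph{not} immediately give $\C>0$, so the more careful statement is needed here — but after passing to $\widetilde Y$ if necessary we may take $\A>0$, and then $\C/\sqrt\tau<0$ is still possible; I will handle both subcases as in Lemma \ref{Lem:C}), the discrete sum in \eqref{BKWW:T3.2} contributes only when $\C/\sqrt\tau+2k<0$, i.e. when $\C<0$ and $k=0$ and $\tau$ is small enough that $\C/\sqrt\tau<-2k$ fails for $k\ge1$ — but for \emph{large} $\tau$ the condition $\C/\sqrt\tau+2k<0$ can only hold for $k=0$ and only if $\C<0$, and then $|\C/\sqrt\tau|\to0$ so the exponent $\tau(\C/\sqrt\tau)^2/4=\C^2/4$ stays bounded while the prefactor $(\C/\sqrt\tau)\to0$; thus the discrete sum vanishes in the limit. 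So in all cases only the integral term survives asymptotically.

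Next I would rescale the integration variable in the surviving integral by $u=v/\sqrt\tau$, exactly as in \eqref{C*}:
\begin{equation*}
C_{\A/\sqrt\tau,\C/\sqrt\tau}\topp\tau \sim \frac{2^{(\A+\C)/\sqrt\tau}}{8\pi}\cdot\frac1{\sqrt\tau}\int_0^\infty e^{-v^2/4}\,\Big|\Gamma\big(\tfrac{\A/\sqrt\tau+\i v/\sqrt\tau}{2}\big)\Gamma\big(\tfrac{\C/\sqrt\tau+\i v/\sqrt\tau}{2}\big)\Big|^2\,\frac{\d v}{|\Gamma(\i v/\sqrt\tau)|^2}.
\end{equation*}
Now I use the small-argument behavior $\Gamma(z)\sim1/z$: we have $\Gamma\big(\tfrac{\A+\i v}{2\sqrt\tau}\big)\sim \tfrac{2\sqrt\tau}{\A+\i v}$ and $\Gamma\big(\tfrac{\C+\i v}{2\sqrt\tau}\big)\sim\tfrac{2\sqrt\tau}{\C+\i v}$, while $\tfrac{\tau}{|\Gamma(\i v/\sqrt\tau)|^2}\to v^2$ by \eqref{limit:Gamma}. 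Collecting powers of $\sqrt\tau$: the two numerator Gammas give $|2\sqrt\tau|^2\cdot|2\sqrt\tau|^2/(|\A+\i v|^2|\C+\i v|^2)=16\tau^2/(|\A+\i v|^2|\C+\i v|^2)$, and dividing by $|\Gamma(\i v/\sqrt\tau)|^2\sim \tau/v^2$ inside the $1/\sqrt\tau$-scaled integral, the net power of $\tau$ matches so that $\sqrt\tau\cdot C_{\A/\sqrt\tau,\C/\sqrt\tau}\topp\tau$ converges. Justification of passing the limit under the integral sign is the same domination argument used for Lemma \ref{Lem:C} (the integrand is bounded by a fixed integrable function, since $|\Gamma|$ is bounded on the relevant vertical strips and $e^{-v^2/4}$ dominates), so I would only remark on it.

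Carrying the constants through, I expect the limit of the integral to produce $\int_0^\infty e^{-v^2/4}\,\tfrac{16 v^2}{|\A+\i v|^2|\C+\i v|^2}\,\d v$ up to the factor $2^{(\A+\C)/\sqrt\tau}\to1$, and I would then recognize this as $8\pi\cdot 2\,\mathfrak C_{\A,\C}$ by comparing with the spectral representation \eqref{eta:heat}--\eqref{C:norm:eta}: indeed $g_1(x,y)=\tfrac2\pi\int_0^\infty e^{-u^2/4}\sin(ux)\sin(uy)\,\d u$, so $\mathfrak C_{\A,\C}=\int_{\r_+^2}e^{-\A x-\C y}g_1(x,y)\,\d x\,\d y$ equals $\tfrac2\pi\int_0^\infty e^{-u^2/4}\big(\int_0^\infty e^{-\A x}\sin(ux)\,\d x\big)\big(\int_0^\infty e^{-\C y}\sin(uy)\,\d y\big)\,\d u=\tfrac2\pi\int_0^\infty e^{-u^2/4}\tfrac{u}{\A^2+u^2}\tfrac{u}{\C^2+u^2}\,\d u$. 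Matching this (after substituting $u=v$ and noting $|\A+\i v|^2=\A^2+v^2$) against the factor $\tfrac{2^{(\A+\C)/\sqrt\tau}}{8\pi}\cdot\tfrac1{\sqrt\tau}\cdot(\text{integral})$ and multiplying by $2^{\A/\sqrt\tau}\sqrt\tau/(2\pi)$ as in the statement, all the $\tau$-powers cancel and the constants collapse to $\tfrac1{2\pi\mathfrak C_{\A,\C}}$. The main obstacle is bookkeeping: tracking the exact powers of $2$ and $\sqrt\tau$ and the factors of $2/\pi$ versus $1/(8\pi)$ so that the final constant comes out to precisely $\tfrac1{2\pi\mathfrak C_{\A,\C}}$ rather than off by a small rational factor; everything else (the vanishing of the discrete sum, dominated convergence, $\Gamma(z)\sim1/z$) is routine given what is already proved in the excerpt, particularly \eqref{limit:Gamma} and the argument of Lemma \ref{Lem:C}.
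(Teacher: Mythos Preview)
Your argument has a genuine gap in the case $\C<0$ (which is allowed, since the hypothesis is only $\A+\C>0$ and you have reduced to $\A>0$). You claim the discrete term in \eqref{BKWW:T3.2} vanishes because the factor $(\C/\sqrt\tau+2k)|_{k=0}=\C/\sqrt\tau\to 0$. But you overlooked that the overall prefactor in \eqref{BKWW:T3.2} is
\[
\frac{\Gamma\!\big(\tfrac{\A+\C}{2\sqrt\tau}\big)\Gamma\!\big(\tfrac{\A-\C}{2\sqrt\tau}\big)}{2(\C/\sqrt\tau)\,\Gamma(-\C/\sqrt\tau)},
\]
so the $\C/\sqrt\tau$ in the denominator cancels exactly the one you singled out. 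Using $\Gamma(z)\sim 1/z$ on the remaining Gammas, the discrete contribution to $\dfrac{2\pi C^{(\tau)}_{\A/\sqrt\tau,\C/\sqrt\tau}}{\sqrt\tau}$ tends to $-\dfrac{4\pi\C}{\A^2-\C^2}e^{\C^2/4}\,\mathbf 1_{\C<0}$, which is nonzero.

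This error is mirrored in your identification step: the Fubini computation
\[
\mathfrak C_{\A,\C}=\tfrac{2}{\pi}\int_0^\infty e^{-u^2/4}\frac{u}{\A^2+u^2}\frac{u}{\C^2+u^2}\,\d u
\]
requires $\int_0^\infty e^{-\C y}\sin(uy)\,\d y$ to converge, i.e.\ $\C>0$. For $\C<0$ that integral equals $\mathfrak C_{\A,|\C|}=\mathfrak C_{\A,-\C}$, not $\mathfrak C_{\A,\C}$, and the paper closes the gap precisely with the discrete correction: one checks (via the explicit formula \eqref{CC(a,c)} and $\erfc(x)+\erfc(-x)=2$) that $\mathfrak C_{\A,\C}=\mathfrak C_{\A,-\C}-\dfrac{2\C e^{\C^2/4}}{\A^2-\C^2}$. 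So your two omissions cancel each other, but as written the argument is incorrect for $\C<0$. For $\C\ge 0$ your approach is essentially the paper's.
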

\begin{proof} We invoke  \eqref{BKWW:T3.2} with $\A/\sqrt{\tau}$ and  $\C/\sqrt{\tau}$.
  For large $\tau$, at most one atom may be present.
With $\A>0$,
 the contribution of this atom to $  C\topp \tau_{\A/\sqrt{\tau},\C/\sqrt{\tau}}$ is
  $$
  2^{(\A+\C)/\sqrt{\tau}}
  \frac{\Gamma(\frac{\C+\A}{2\sqrt{\tau}},\frac{\A-\C}{2\sqrt{\tau}})}{2 \Gamma(-\C/\sqrt{\tau} )}  e^{\C^2/4} 1_{\C<0}.
 $$
 Combining this with the integral part, we get
   \begin{multline*}
   \frac{2\pi  C\topp \tau_{\A/\sqrt{\tau},\C/\sqrt{\tau}}}{\sqrt{\tau}}=\frac{1}{4\sqrt{\tau}}\int_0^\infty e^{-\tau u^2/4} \frac{|\Gamma(\tfrac{\A/\sqrt{\tau}+\i u}{2},\tfrac{\C/\sqrt{\tau}+\i u}{2})|^2}{|\Gamma(\i u)|^2}\d u +
   \pi   2^{(\A+\C)/\sqrt{\tau}}
  \frac{\Gamma(\frac{\C+\A}{2\sqrt{\tau}},\frac{\A-\C}{2\sqrt{\tau}})}{\sqrt{\tau} \Gamma(-\C/\sqrt{\tau} )}  e^{\C^2/4}1_{\C<0}
   \\= \frac{1}{4}\int_0^\infty e^{-v^2/4} \frac{|\Gamma(\tfrac{\A+\i v}{2\sqrt{\tau}},\tfrac{\C+\i v}{2\sqrt{\tau}})|^2}{\tau|\Gamma(\i v/\sqrt{\tau})|^2}\d v  +
   \pi   2^{(\A+\C)/\sqrt{\tau}}
  \frac{\Gamma(\frac{\C+\A}{2\sqrt{\tau}},\frac{\A-\C}{2\sqrt{\tau}})}{\sqrt{\tau} \Gamma(-\C/\sqrt{\tau} )}  e^{\C^2/4} 1_{\C<0} \\
   \to \int_0^\infty e^{-v^2/4} \frac{4 v^2}{(\A^2+ v^2) (\C^2+v^2)}\d v - \frac{4\pi \C e^{\C^2/4}}{\A^2-\C^2}1_{\C<0},
   \end{multline*}
   as we can pass to the limit  under the integral sign.

     Here we use $\Gamma(z)\sim 1/z$ as $z\to 0$ which gives
\begin{equation*}
  \label{Mthmtica:III}
   \lim_{\tau\to\infty} \frac{|\Gamma(\tfrac{\A+\i v}{2\sqrt{\tau}},\tfrac{\C+\i v}{2\sqrt{\tau}})|^2}{\tau|\Gamma(\i v/\sqrt{\tau})|^2}
  =\frac{16 v^2}{(\A^2+v^2)(\C^2+v^2)} \mbox{ and } \lim_{\tau\to\infty}  \frac{\Gamma(\frac{\C+\A}{2\sqrt{\tau}},\frac{\A-\C}{2\sqrt{\tau}})}{2 \sqrt{\tau} \Gamma(-\C/\sqrt{\tau} )}=\frac{-2\C}{\A^2-\C^2}.
\end{equation*}
To complete the proof we need the following, where in view of symmetry we assume $\A>0$.
\end{proof}
\begin{lemma} For $\A+\C>0$ with $\A>0$, we have
\begin{equation}\label{C-no-CC}
    \frac{1}{2\pi}\int_0^\infty e^{-v^2/4} \frac{4 v^2}{(\A^2+ v^2) (\C^2+v^2)}\d v - \frac{2 \C e^{\C^2/4}}{\A^2-\C^2}1_{\C<0}= \mathfrak C_{\A,\C}.
\end{equation}
\end{lemma}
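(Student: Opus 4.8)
The plan is to evaluate both sides of \eqref{C-no-CC} in closed form in terms of the complementary error function $\erfc$ and then compare them. As in the rest of the proof of Theorem~\ref{Thm:KPZ:fixed}, I keep the reduction $\A>0$ in force, while $\C$ is allowed to have either sign subject to $\A+\C>0$.

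First I would compute $\mathfrak C_{\A,\C}$ from its definition \eqref{C:norm:eta}, using the Gaussian form of the kernel in \eqref{eta:heat}, $g_1(x,y)=\pi^{-1/2}\big(e^{-(x-y)^2}-e^{-(x+y)^2}\big)$. Split $\mathfrak C_{\A,\C}$ into the two double Gaussian integrals coming from these two terms. In the one with $e^{-(x-y)^2}$ substitute $w=x-y$ and integrate the remaining variable over $(\max\{0,-w\},\infty)$; this is the only place where $\A+\C>0$ is used. In the one with $e^{-(x+y)^2}$ substitute $s=x+y$ and integrate $x$ over $(0,s)$. Each resulting one-dimensional integral is of the elementary type $\int_0^\infty e^{-t^2-bt}\,\d t=\tfrac{\sqrt\pi}{2}e^{b^2/4}\erfc(b/2)$, valid for all real $b$. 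Collecting terms over the common denominator $(\A+\C)(\A-\C)=\A^2-\C^2$ gives
\[
\mathfrak C_{\A,\C}=\frac{\A\,e^{\A^2/4}\erfc(\A/2)-\C\,e^{\C^2/4}\erfc(\C/2)}{\A^2-\C^2},
\]
an identity valid for all real $\A,\C$ with $\A+\C>0$ (the apparent singularity at $\A=\C$ is removable).

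Next I would evaluate the integral on the left of \eqref{C-no-CC}. By partial fractions $\frac{v^2}{(\A^2+v^2)(\C^2+v^2)}=\frac{1}{\A^2-\C^2}\big(\frac{\A^2}{\A^2+v^2}-\frac{\C^2}{\C^2+v^2}\big)$, and the standard integral $\int_0^\infty\frac{e^{-v^2/4}}{b^2+v^2}\,\d v=\frac{\pi}{2b}e^{b^2/4}\erfc(b/2)$ (valid for $b>0$, and depending on $b$ only through $b^2$, so that one must insert $|\C|$ when $\C<0$) give
\[
\frac{1}{2\pi}\int_0^\infty e^{-v^2/4}\,\frac{4v^2}{(\A^2+v^2)(\C^2+v^2)}\,\d v=\frac{\A\,e^{\A^2/4}\erfc(\A/2)-|\C|\,e^{\C^2/4}\erfc(|\C|/2)}{\A^2-\C^2}.
\]
If $\C\ge0$ this already equals $\mathfrak C_{\A,\C}$ and $1_{\C<0}=0$, so \eqref{C-no-CC} holds. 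If $\C<0$, I would apply the reflection $\erfc(-t)=2-\erfc(t)$ at $t=\C/2$ to get $-|\C|e^{\C^2/4}\erfc(|\C|/2)=\C e^{\C^2/4}\erfc(-\C/2)=-\C e^{\C^2/4}\erfc(\C/2)+2\C e^{\C^2/4}$, so the displayed integral equals $\mathfrak C_{\A,\C}+\tfrac{2\C e^{\C^2/4}}{\A^2-\C^2}$; subtracting the correction $\tfrac{2\C e^{\C^2/4}}{\A^2-\C^2}\,1_{\C<0}$ then recovers $\mathfrak C_{\A,\C}$, which is \eqref{C-no-CC}.

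Essentially everything here is routine bookkeeping; the point that genuinely requires care is that $\int_0^\infty\frac{e^{-v^2/4}}{b^2+v^2}\,\d v$ sees only $|b|$, so for $\C<0$ it produces $\erfc(|\C|/2)$ rather than $\erfc(\C/2)$, and it is precisely the $\erfc$-reflection together with the indicator term in \eqref{C-no-CC} that reconciles these — this is exactly why the correction term has that form. One should also verify the two elementary Gaussian integrals and dispose of the removable singularity at $\A=\C$ by continuity. As an alternative route to the closed form for $\mathfrak C_{\A,\C}$ when $\A,\C>0$, one can instead use the trigonometric representation $g_1(x,y)=\tfrac2\pi\int_0^\infty e^{-u^2/4}\sin(xu)\sin(yu)\,\d u$ with Fubini (the inner $x,y$-integral becomes $\tfrac{u}{\A^2+u^2}\cdot\tfrac{u}{\C^2+u^2}$), and then extend to all $\A+\C>0$ by analytic continuation in $(\A,\C)$ on $\{\re(\A+\C)>0\}$; this makes clear that the whole content of \eqref{C-no-CC} is the continuation past $\C=0$.
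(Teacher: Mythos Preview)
Your proof is correct. The approach is close in spirit to the paper's but organized differently: the paper first recognizes the integral on the left of \eqref{C-no-CC} as $\mathfrak C_{|\A|,|\C|}$ by writing $\tfrac{v}{b^2+v^2}=\int_0^\infty e^{-|b|x}\sin(vx)\,\d x$ and applying Fubini with the trigonometric form of $g_1$, and only then invokes the explicit $\erfc$-formula \eqref{CC(a,c)} together with $\erfc(x)+\erfc(-x)=2$ to pass from $\mathfrak C_{\A,-\C}$ to $\mathfrak C_{\A,\C}$. You instead evaluate both sides separately in closed form via partial fractions and the scalar identity $\int_0^\infty \tfrac{e^{-v^2/4}}{b^2+v^2}\,\d v=\tfrac{\pi}{2b}e^{b^2/4}\erfc(b/2)$, then match them using the same $\erfc$-reflection. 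Your route is slightly more elementary and self-contained (it derives \eqref{CC(a,c)} rather than quoting it), while the paper's route has the conceptual advantage of showing directly, without any special-function identities, that the integral \emph{is} $\mathfrak C_{|\A|,|\C|}$ --- which you in fact note as your alternative at the end. Either way the crux is the same: the integral sees only $|\C|$, and the indicator correction is exactly what the $\erfc$-reflection produces.
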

\begin{proof} 
   For non-zero $\A,\C$ the integral in \eqref{C-no-CC} is
   \begin{multline*}
      \frac{1}{2\pi}\int_0^\infty e^{-v^2/4} \frac{4 v^2}{(\A^2+ v^2) (\C^2+v^2)}\d v =
       \frac{2}{\pi}\int_0^\infty e^{-v^2/4} \int_0^\infty e^{-|\A| x}\sin (v x) \d x \int_0^\infty e^{-|\C| y} \d y \sin (v y)\d v
      \\=   \frac{2}{\pi} \int_0^\infty \int_0^\infty e^{-|\A| x-|\C| y} \int_0^\infty e^{-v^2/4} \sin (v x)   \sin (v y)\d v  \d x \d y =     \int_0^\infty e^{-|\A| x-|\C| y}  g_1(x,y)  \d x \d y\\=  \mathfrak C_{|\A|,|\C|},
   \end{multline*}
  where we used \eqref{eta:heat} and Fubini's theorem. By taking a limit (or modifying the above  calculation) the formula extends to $\C=0$.
  This shows that  formula \eqref{C-no-CC} holds for $\A>0,\C\geq 0$.

  To extend the formula to $\C\in(-\A,0)$  we use the  identity $\erfc(x)+\erfc(-x)=2 $  and explicit form
  of   \eqref{C:norm:eta}, i.e.,
\begin{equation*}
  \label{CC(a,c)}
\mathfrak C_{\A,\C}=
\begin{cases}\tfrac{\A e^{\A^2/4}\erfc(\A/2)-\C e^{\C^2/4}\erfc(\C/2)}{\A^2-\C^2}& \A\ne \C , \A+\C>0 \\ \\
\frac{2+\A^2/2}{4\A}e^{\A^2/4}\erfc(\A/2)-\tfrac{1}{2\sqrt{\pi}} &\A=\C>0.
\end{cases}
\end{equation*}
  A calculation    verifies that  %
$$\mathfrak C_{\A,\C}=\mathfrak C_{\A,-\C}- \frac{2 \C e^{\C^2/4}}{\A^2-\C^2}.$$
\end{proof}

Next we tackle convergence of the integral in \eqref{f:tau}.
The factor $e^{-v^2/4}$ allows us to pass to the limit under the integral sign.
We use
\begin{equation*}
  \label{Gamma1} \lim_{\tau\to\infty}\frac{|\Gamma((\A +\i v)/(2\sqrt{\tau}))|^2}{|\Gamma(\i v/\sqrt{\tau})|^2}=\frac{4 v^2}{\A^2+v^2}
\end{equation*}
and the following.
For $v>0$,
\begin{equation}
  \label{limK}
  \lim_{\tau\to\infty}\frac{K_{\i v/\sqrt{\tau}}(e^{-x\sqrt{\tau}})}{\sqrt{\tau}}= \begin{cases}
   \frac{\sin( v x)}{v} & x>0, \\
   0& x\leq 0.
 \end{cases}
\end{equation}
This follows from \eqref{K20} when  $x\leq 0$, and from \eqref{Alexey-sinh} with $\nu=\i v$ when $x>0$.
 For $x>0$, passing to the limit under the integral and invoking \eqref{limK}, we get
\begin{multline*}
 e^{-\C x} \lim_{\tau\to \infty} f_\tau(x)= \frac{e^{-\C x}}{2\pi\mathfrak C_{\A,\C}} {\mathbf 1}_{\{x>0\}}\int_0^\infty
  e^{-v^2/4} \frac{4v}{v^2+\A^2} \sin ( v x) \d v  \\=
   \frac{e^{-\C x}}{\mathfrak C_{\A,\C}} {\mathbf 1}_{\{x>0\}}\int_0^\infty \frac{2}{\pi} \int_0^\infty
  e^{-v^2/4} e^{-\A y} \sin (v y)  \sin ( v x) \d v \d y
  = \frac{e^{-\C x}}{\mathfrak C_{\A,\C}}{\mathbf 1}_{\{x>0\}} \int_0^\infty  e^{-\A y} g_1(x, y)\d y
  \\= \frac{e^{-\C x}h_0(x)}{\mathfrak C_{\A,\C}},
\end{multline*}
which matches \eqref{eta0} and recovers the first factor in \eqref{eta:joint}.

 Next, we consider a single factor from the product expression in \eqref{Y:joint}. For $t>0$ we have
\begin{multline*}
 \sqrt{\tau} p_{\tau t}(x\sqrt{\tau},y\sqrt{\tau}) = \sqrt{\tau}\frac{2}{\pi}\int_0^\infty e^{ -t\tau u^2}K_{\i u}(e^{x\sqrt{\tau}})K_{\i u}(e^{y\sqrt{\tau}})\frac{\d u}{|\Gamma(\i u)|^2}
 \\=  \frac{2}{\pi}\int_0^\infty e^{ -t v^2}\frac{1}{\sqrt{\tau}}K_{\i v/\sqrt{\tau}}(e^{x\sqrt{\tau}})\frac{1}{\sqrt{\tau}}K_{\i v/\sqrt{\tau}}(e^{y\sqrt{\tau}})\frac{\tau}{|\Gamma(\i v/\sqrt{\tau})|^2}\d v.
\end{multline*}
 So  by the dominated convergence theorem, \eqref{limit:Gamma} and \eqref{limK} give
\begin{equation}
  \label{lim:p2g}
  \lim_{\tau\to\infty} \sqrt{\tau} p_{ \tau t}(x\sqrt{\tau},y\sqrt{\tau})
= \begin{cases}
\frac{2}{\pi}\int_0^\infty e^{ -t v^2/4}\sin (x v) \sin(y v) \d v=g_t(x,y)& \mbox{ if } x>0,y>0,\\
\\
0 &\mbox{otherwise}.
\end{cases}
\end{equation}
Taking it all together, we see that density \eqref{Y:joint} converges to density \eqref{eta:joint} for all $(x_0,\dots,x_n)\in\r^{n+1}$.

\begin{remark}\label{Rem:Yizao}
As expected,   process $\widetilde \eta\topp{\A,\C}$ is a ``fixed point" of the procedure %
in the following sense.
Instead of taking  $t\in [0,1]$, consider
   a Markov process $(\widetilde \eta_t\topp{\A,\C})_{t\in[0,\tau]}$   with initial distribution \eqref{eta0} with the normalizing constant $\int_{\r_+^2} e^{-\C x - \A y}g_\tau(x,y)\d x\d y$, and transition probabilities \eqref{eta:trans} but with
  Doob's $h$-transform given by
$$h_t(x)= \int_{\r_+}g_{\tau-t}(x,y)e^{-\A y} \d y, \; 0\leq t<\tau $$
(with $h_\tau(x):=e^{-\A x}$).
Then
 the law of
  $\frac{1}{\sqrt{\tau}}\{\widetilde \eta\topp{\A/\sqrt{\tau},\C/\sqrt{\tau}}_{\tau t}\}_{t\in [0,1]}$ does not depend on $\tau>0$.
  This is a consequence of scaling  $\sqrt{\tau} g_{\tau t}(x\sqrt{\tau},y\sqrt{\tau})=g_t(x,y)$ for the kernel \eqref{eta:heat}.
  \end{remark}

\subsection{Proof of Theorem \ref{T:halfL:hd}}\label{Sec:halfL:hd}
To make the dependence on $\tau$ explicit, we write $H_t\topp\tau(x)$ for expression \eqref{Ht-explicit}.
 We fix $0=t_0<t_1<\dots <t_n$.
The joint distribution of the vector $(Y_0,Y_{t_1},\dots,Y_{t_n})$ has density
\begin{equation}
  \label{f-tau}
  f_\tau(\vv x)=\frac{H_{t_n}\topp \tau(x_{n})}{C_{\A,\C}\topp \tau} e^{-\C x_0}\prod_{j=1}^n p_{t_j-t_{j-1}}(x_{j-1},x_j).
\end{equation}

We note that
as $\tau \to \infty$ the integral term in \eqref{Ht-explicit}  converges to $0$ and if $\A<0$ then the dominant term in the finite sum is the one with $k=0$.
Thus we have the following result: if $\A<0$ then
\begin{equation*}
H_t\topp\tau(x)=2^{\A+1} e^{(\tau-t) \A^2/4} \frac{ K_{\A}(e^{-x})}{\Gamma(-\A)} (1+o(1)), \;\;\; \tau \to +\infty.
\end{equation*}
In the same way, from \eqref{BKWW:T3.2}  we see that when $\A<0$ then
\begin{equation*}
C_{\A,\C}^{(\tau)}=2^{\A+\C-1} \frac{\Gamma((\C+\A)/2)\Gamma((\C-\A)/2)}{\Gamma(-\A)} e^{\tau \A^2/4}(1+o(1)), \;\;\; \tau \to +\infty.
\end{equation*}
Thus we obtain
 \begin{equation}
   \label{H/C}
   \lim_{\tau\to\infty}\frac{ H_{t}\topp \tau(x)}{C_{\A,\C}\topp \tau}=\frac{4 e^{-t\A^2/4}K_a(e^{-x})}{2^c\Gamma((\A+\C)/2)\Gamma((\C-\A)/2)}.
 \end{equation}
If $\A=0$, changing the variable in the integral \eqref{Ht-explicit}, compare \eqref{C*}, we get
$$
H_t\topp\tau(x)=\frac{2}{\sqrt{\pi}}K_0(e^{-x})\left(\frac{1}{\sqrt{\tau}}+o(1/\sqrt{\tau})\right),\;\tau\to\infty $$
and
$$C_{0,\C}^{(\tau)}=\frac{2^c}{2\sqrt{\pi}}\Gamma(\C/2)^2\left(\frac{1}{\sqrt{\tau}}+o(1/\sqrt{\tau})\right),\;\tau\to\infty .
$$
(Here we use $4\int_0^\infty \exp(-v^2/4)\d v=4\sqrt{\pi}$.) Thus \eqref{H/C} holds also for $\A=0$.

This shows that the density \eqref{f-tau} converges as $\tau\to\infty$ to the joint density
$$
\frac{4 }{2^c\Gamma((\A+\C)/2)\Gamma((\C-\A)/2)}e^{-\C x_0}\left(\prod_{j=1}^n p_{t_j-t_{j-1}}(x_{j-1},x_j)\right)
e^{-t_n\A^2/4}K_a(e^{-x_n})$$
of  vector
$(Z_0\topp{\A,\C},Z_{t_1}\topp{\A,\C},\dots,Z_{t_n}\topp{\A,\C})$.

\subsection{Proof of Theorem \ref{T:Alexey2}}\label{Sec:Alexey2}
Let $z>0$. Following \cite{donati2001some} and \cite{hariya2004limiting}, consider the transformation
$$
{\mathbb T}_z(X)(t)=X_t-\ln\Big(1+z \int_0^{t} e^{2 X_s} d s\Big)
$$
that maps continuous function $\{X_s\}_{0\le s \le t}$ into a continuous function.

For $\nu \in \r$ and $\alpha>0$ we define the Generalized Inverse Gaussian distribution
$$
\p({\textrm{GIG}}(\nu,\alpha) \in \d x)=\frac{\alpha^{-\nu} x^{\nu-1}}{2 K_{\nu}(\alpha)}
e^{-\frac{1}{2}(x+\alpha^2/x)}{\mathbf 1}_{\{x>0\}} \d x,
$$
which in statistical literature \cite{jorgensen2012statistical}   comes with an additional scale parameter.
From Proposition 4.1 in \cite{hariya2004limiting},
 we get the following result: for any $\alpha>0$, $\nu \in \r$, $t\ge 0$ and every non-negative ${\mathcal F}_t$-measurable functional $F$ one has
\begin{equation}\label{HY-P4.1}
{\mathbb E}[ F({\mathbb T}_{{\textrm{GIG}}(\nu,\alpha)}(W^{(\nu)})(t), \; s\le t)]=
e^{-\nu^2 t/2}{\mathbb E}\left[F(W_s, \; s\le t) e^{-\frac{\alpha^2}{2} \int_0^{t} e^{2 W_s} d s}
\frac{K_{\nu}(\alpha e^{W_t})}{K_{\nu}(\alpha)} \right].
\end{equation}
Here $W^{(\nu)}_t=W_t+\nu t$ is a standard Brownian motion with drift $\nu t$. We set $\nu=-\A$, $\alpha=e^{-x}$, scale time parameter $t\mapsto t/2$, use the fact that $(W_t)_{t\ge 0}$ and $(-W_t)_{t\ge 0}$ have the same distribution (when the Brownian motion is started from zero) and that the right hand side of \eqref{HY-P4.1} is invariant with respect to changing $\nu$ to $-\nu$.
Applying the above identity to functional $F(-W_s, s\le t)$, we get
\begin{multline}\label{eq:Haria_Yor3}
{\mathbb E}\left[ F(-{\mathbb T}_{\frac{1}{2}{\textrm{GIG}}(-\A,e^{-x})}(-X^{(\A)})(t), \; s\le t)\right]\\=
{\mathbb E}\left[F(X_s-x, \; s\le t) e^{-\frac{1}{4} \int_0^{t} e^{-2 X_s} d s}
\frac{e^{-\A^2 t/4}K_{\A}(e^{-X_t})}{K_{\A}(e^{-x})} \middle| X_0=x\right],
\end{multline}
where %
$(X^{(\nu)}_t):=(W^{(\nu)}_{t/2})\stackrel{d}{=}(\frac{1}{\sqrt{2}}W_t+\nu t/2)\stackrel{d}{=}(B_t+\nu t/2)$ and
$(X_t):=(X^{(0)}_t)\stackrel{d}{=}(\frac{1}{\sqrt{2}} W_t)\stackrel{d}{=}(B_t)$.
Let $\tilde X$ be the Markov process having transition probability density \eqref{pt}.
As was discussed in \cite[Section 3]{Bryc-Kuznetsov-Wang-Wesolowski-2021}, $\tilde X$ can be identified with the Brownian motion $B$ (of variance   $1/2$) killed at a rate $\frac{1}{4} e^{-2B_t}$.
 More precisely,  the semigroup of the process $\tilde X$ is given by
\begin{equation*}\label{P_t-semi}
\tilde {\mathcal {P}}_t f(x)=\e_x[f(\tilde X_t)]=\e\left[ e^{-\tfrac14\int_0^t e^{-2 B_s} \d s} f( B_t) \middle| B_0=x \right].
\end{equation*}

 Thus we can rewrite \eqref{eq:Haria_Yor3} in the form

\begin{equation}\label{eq:Haria_Yor5}
{\mathbb E}\left[ F(-{\mathbb T}_{\frac{1}{2}{\textrm{GIG}}(-\A,e^{-x})}(-X^{(\A)})(t), \; s\le t)\right]=
e^{-\A^2 t/4}{\mathbb E}\left[F(\tilde X_s-x, \; s\le t)
\frac{K_{\A}(e^{-\tilde X_t})}{K_{\A}(e^{-x})} \middle \vert \tilde X_0=x\right].
\end{equation}

Now we start the process $\tilde X$ from initial distribution
$$
\p(\tilde X_0=\d x)=\frac{4 e^{-\C x} K_{\A}(e^{-x})}{2^{\C} \Gamma((\C-\A)/2) \Gamma((\C+\A)/2)}\d x
$$
and we note that this gives us the process $Z\topp{\A,\C}$ in Theorem \ref{T:halfL:hd}. So  \eqref{eq:Haria_Yor5} becomes
\begin{equation}\label{eq:Haria_Yor4}
\int {\mathbb E}[ F(-{\mathbb T}_{\frac{1}{2}{\textrm{GIG}}(-\A,e^{-x})}(-X^{(\A)})(t), \; s\le t)]
\p(\tilde X_0=\d x)= {\mathbb E}\Big[F( Z_s\topp{\A,\C}-Z_0\topp{\A,\C}, \; s\le t)\Big].
\end{equation}
On the left-hand side of \eqref{eq:Haria_Yor4} we get:
\begin{align*}
\int_{\r} \frac{4 e^{-\C x} K_{\A}(e^{-x})}{2^{\C} \Gamma((\C-\A)/2) \Gamma((\C+\A)/2)}
{\mathbb E}[ F(-{\mathbb T}_{\frac{1}{2}{\textrm{GIG}}(-\A,e^{-x})}(-X^{(\A)})(t), \; s\le t)]\d x.
\end{align*}
We need to compute the distribution of the mixture
$$
\int_{\r} \frac{4 e^{-\C x} K_{\A}(e^{-x})}{2^{\C} \Gamma((\C-\A)/2) \Gamma((\C+\A)/2)}
\p(\tfrac{1}{2}{\textrm{GIG}}(-\A,e^{-x})\in \d y) \d x.
$$
The density of this mixture of distributions can be written in the form
\begin{align*}
\int_{\r} \frac{4 e^{-\C x} K_{\A}(e^{-x})}{2^{\C} \Gamma((\C-\A)/2) \Gamma((\C+\A)/2)}
\frac{e^{-\A x} (2y)^{-\A-1}}{K_{\A}(e^{-x})} e^{-y-e^{-2 x}/(4y)} \d x=
\frac{y^{(\C-\A)/2-1}}{\Gamma((\C-\A)/2)} e^{-y} 1_{y>0}
\end{align*}
and this is the density of $\gamma_{(\C-\A)/2}$ random variable. Thus we obtain
\begin{align*}
 {\mathbb E}\Big[F( Z_s\topp{\A,\C}-Z_0\topp{\A,\C}, \; s\le t)\Big]=
{\mathbb E}[ F(-{\mathbb T}_{\gamma_{(\C-\A)/2}}(-X^{(\A)})(t), \; s\le t)],
\end{align*}
which implies that the process $(Z_t\topp{\A,\C}-Z_0\topp{\A,\C})$ has the same distribution as
$$
-{\mathbb T}_{\gamma_{(\C-\A)/2}}(-X^{(\A)})(t)=X^{(\A)}_t+\ln\Big(1+\gamma_{(\C-\A)/2}
\int_0^{t} e^{-2 X^{(\A)}_s} d s \Big),
$$
where %
$(X^{(\A)}_t)\stackrel{d}{=}(\frac{1}{\sqrt{2}}W_t+\A t/2)$
 and $W$ is the standard Brownian motion. Using the fact that the distribution of $(Z_t\topp{\A,\C})$ is invariant under change $\A \mapsto -\A$, we obtain the expression in \eqref{KPZ:half:hd} and thus conclude the proof of Theorem
 \ref{T:Alexey2}. %
  \subsection{Proof of Theorem \ref{T:Z:fixed}}\label{Sec:T:Z:fixed}
  Recall that the $1/\sqrt{2}$ multiple of the  3-dimensional Bessel process  $BES^{3}$    \cite[Ch VI \$3]{revuz2013continuous}
  has transition probabilities
\begin{equation}\label{KPZ:fixed:tr}
  \p(\rho_t\topp \C=\d y|\rho_s\topp \C=x)= \frac{y}{x} g_{t-s}(x,y)\d y,\; 0\leq s<t,\; x,y>0,
\end{equation}
with kernel \eqref{eta:heat}.

    \begin{proof}[Proof of Theorem \ref{T:Z:fixed}]

It is known \cite[(10.25.3) and (10.30.3)]{NIST2010}  that %
\begin{equation*}
  K_0 (z)\sim \sqrt{\frac{\pi}{2 z}}e^{-z} \mbox{ as $z\to \infty$, and  }
  K_0(z)\sim -\log z \mbox{ as $z\searrow 0$}.
\end{equation*}
We get $$\lim_{T\to \infty}   K_0(e^{-  x \sqrt{T}})/\sqrt{T}= x {\mathbf 1}_{\{x>0\}}.$$ Using this in \eqref{Z0} we see that the densities of $Z_0\topp { \C/\sqrt{T}}$ converge pointwise. We get
    $\tfrac{1}{\sqrt{T}}Z_0\topp {\C/\sqrt{T}}\Rightarrow  \C^2 x e^{-\C x}{\mathbf 1}_{\{x>0\}}\d x$,
which is the density of $\frac{1}{\C}\gamma_2$.
From \eqref{Z:trans} and \eqref{lim:p2g}, we see that the transition densities
$$
\frac{K_0(e^{-y\sqrt{T}})}{K_0(e^{-x\sqrt{T}})}\;\sqrt{T} p_{T(t-s)}(x\sqrt{T},y\sqrt{T})
$$
for the process $\frac{1}{\sqrt{T}}\left(Z_{tT}\topp {\C/\sqrt{T}}\right)_{t\geq 0} $
converge to \eqref{KPZ:fixed:tr} when $x,y>0$.

A minor technical issue is an undefined expression 0/0 in case $x<0$, which can be handled by considering joint multivariate density as in proof of Theorem \ref{Thm:KPZ:fixed}. We omit the details.
  \end{proof}

\subsection{Proof of Theorem \ref{T:half:hd:fixed}}\label{Sec:half:hd:fixed}
Recall that the initial law \eqref{Z0:hd} is the law of $-\log\left(2 \sqrt{\gamma_{\tfrac{\A+\C}{2}} \tilde \gamma_{\tfrac{\C-\A}{2}}}\right)$.
Since $-\eps \log \gamma_\eps\Rightarrow \gamma_1$ as $\eps\to0$, the initial law $\mu_{\A,\C}(dx)$ is the law of
$\frac{1}{\A+\C}\gamma_1+\frac1{\C-\A}\tilde \gamma_1$.
In particular, when $\A=0$ here we get \eqref{KPZ:fixed:Z0}.

For transition probabilities, we would only need to find the asymptotics of
$e^{-\A^2 t}K_{\A/\sqrt{T}} \left(e^{-x \sqrt{T}}\right)$. For $\A\ne 0$, the limit is
\begin{equation*}
  \lim_{T\to +\infty} \frac{1}{\sqrt{T}} K_{\A/\sqrt{T}}\left(e^{-\sqrt{T} x}\right) =
  \begin{cases}
     \frac{1}{\A} \sinh(\A x) & x>0 ,\\
     0 & x\leq 0.
  \end{cases}
\end{equation*}
For $x\leq 0$, the limit follows from \eqref{K20}. For $x>0$ we use
\eqref{Alexey-sinh} with $\nu=\A$.
To avoid  undefined expression 0/0 when $x\leq 0$, we need to consider joint multivariate density as in the proof of Theorem \ref{Thm:KPZ:fixed}. We omit the details.

 \subsection{Proofs of the observations about convergence of univariate laws}
In this section we collect proofs of observations on the limits of the initial laws which served as justification that some of the limits in \cite{barraquand2022steady} do not extend to the Markov process level.
A convenient tool for this task are identities for the
Laplace transform, which we state in slightly more general form than what we need.

Recall \eqref{C(ac)}.
Since the joint law of $(Y_0,Y_\tau)$ is $$\frac{1}{{C_{\A,\C}\topp \tau}}e^{-\C x-\A y}p_\tau(x,y)\d x \d y,$$ we have
\begin{equation*}
  \label{Jacek0} \e \left[\exp\left(  s_0 Y_\tau\topp{\A,\C} +s_1Y_\tau\topp{\A,\C}\right)\right]=\frac{C_{\A-s_1,\C-s_0}\topp \tau}{C_{\A,\C}\topp \tau}.
\end{equation*}
  This gives  %
\begin{equation}\label{Jacek1}
  \e \left[\exp\left( - s Y_0\topp{\A,\C}\right)\right]=\frac{C_{\A,s+\C}\topp \tau}{C_{\A,\C}\topp \tau}.
\end{equation}

\subsubsection{Proof of  (\ref{a+b=0:1}) (convergence to the exponential law)}%
\label{Sec:P-a+b=0}  %

 By symmetry,  if $\A\ne 0$ without loss of generality we may assume that $\A<0$ so $\min\{\A_\eps,\C_\eps\}<0$  for all $\eps>0$ small enough.

  We now %
  use  \eqref{Jacek1} to
   verify that  $\eps Y_0\topp{\A_\eps,\C_\eps}\Rightarrow \gamma_1$.
  Of course $\C_\eps\to-\A$ so without loss of generality, we assume $\A>0$. (The proof requires minor modifications if $\A<0$.)
From \eqref{BKWW:T3.2}, as $\eps\to0$ we get
\begin{multline*}
    C_{\A_\eps,\C_\eps+\eps s}\topp \tau \sim  \frac{ 1}{8\pi}  \int_0^\infty e^{-\tau u^2/4} \frac{|\Gamma(\tfrac{\A+\i u}{2},\tfrac{-\A +\i u}{2})|^2}{|\Gamma(\i u)|^2}\d u
  \\+       \frac{\Gamma(\frac{(1+s)\eps}{2},\A)}{2\C \Gamma(\A )}\sum_{\{k\geq 0:\; -\A+2k<0 \}} e^{\tau(\C+2k)^2/4}(\C+2k)\frac{(\C,\frac{\eps}{2})_k}{k!(1-\A)_k}.
\end{multline*}
Since the integral converges and $\Gamma(x)\sim \tfrac{1}{x}-\gamma$ as $x\to 0$, the leading term in the asymptotics comes from the coefficient  in front of the sum and the first term in the sum. We see that
\begin{align*}
\e \left[\exp\left( - s \eps Y_0\topp{\A_\eps,\C_\eps}\right)\right]=\frac{C_{\A_\eps,\C_\eps+\eps s}\topp \tau}{C_{\A_\eps,\C_\eps}\topp \tau}
&\sim   \frac{\Gamma(\frac{\C_\eps+\A_\eps+\eps s}{2},\frac{\A_\eps-\C_\eps-\eps s}{2})}{ \Gamma(-\C_\eps -\eps s)}  e^{\tau(\C_\eps+\eps s)^2/4}
\times \Bigg[\frac{\Gamma(\frac{\C_\eps+\A_\eps}{2},\frac{\A_\eps-\C_\eps}{2})}{ \Gamma(-\C_\eps )}e^{\tau\C_\eps^2/4}\Bigg]^{-1}\\
&\sim \frac{\Gamma((1+s)\eps/2)}{\Gamma(\eps/2)}\to \frac{1}{1+s},
\end{align*}
which is the Laplace transform of the exponential $\gamma_1$ law.
\qed

\subsubsection{Proof of asymptotic normality (\ref{Half:line:N})}\label{Sec:HalfZ:II}
We note that as indicated in \eqref{Y0}, the law of $Y_0\topp{\A,\C}$ depends on $\tau$ though the normalizing constant and though $H_0$.
\begin{proof}[Proof of \eqref{Half:line:N}]

We   use \eqref{Jacek1} to determine the Laplace transform.
\begin{equation*}
L_\tau(s):=  \e \left[ e^{-s (Y_0\topp{\A,\C}+\tau \C/2)/\sqrt{\tau}}\right]=e^{-s \C \sqrt{\tau}/2}\frac{C_{\A,\C+s/\sqrt{\tau}}\topp \tau}{C_{\A,\C}\topp \tau}.
\end{equation*}
It is enough to determine how the numerator depends on $s$ for $s<0$, discarding all multiplicative constants that will cancel out with the denominator.
From \eqref{BKWW:T3.2} we know that $ C_{\A,\C+\eps s/\sqrt{\tau}}\topp \tau=I_\tau(s)+D_\tau(s)$ is the  sum of the integral
\begin{align*}
I_\tau(s)&=  \frac{ 1}{8\pi}  \int_0^\infty e^{-\tau u^2/4} \frac{|\Gamma(\tfrac{\A+\i u}{2},\tfrac{\C +s/\sqrt{\tau}+\i u}{2})|^2}{|\Gamma(\i u)|^2}\d u
  \\ &\leq  \frac{ 1}{8\pi}  \int_0^\infty e^{- u^2} \frac{|\Gamma(\tfrac{\A+\i u}{2},\tfrac{\C +s/\sqrt{\tau}+\i u}{2})|^2}{|\Gamma(\i u)|^2}\d u \to \frac{ 1}{8\pi}  \int_0^\infty e^{- u^2} \frac{|\Gamma(\tfrac{\A+\i u}{2},\tfrac{\C  +\i u}{2})|^2}{|\Gamma(\i u)|^2}\d u,
\end{align*}
which is bounded in $\tau$, and the finite sum
\begin{align*}
   D_\tau(s)&= 2^{\A+\C+s/\sqrt{\tau}}  \frac{\Gamma(\frac{\A+\C+s/\sqrt{\tau}}{2},\frac{\A-\C-s/\sqrt{\tau}}{2})}{2(\C+s/\sqrt{\tau}) \Gamma(-\C -s/\sqrt{\tau} )}
   \\ &\times \sum_{\{k\geq 0:\; \C+s/\sqrt{\tau}+2k<0 \}} e^{\tau(\C+s/\sqrt{\tau}+2k)^2/4}(\C+s/\sqrt{\tau}+2k)\tfrac{\left(\C+s/\sqrt{\tau},\tfrac{\A+\C+s/\sqrt{\tau}}{2}\right)_k}
   {k!\left(1+\tfrac{\C+s/\sqrt{\tau}-\A}{2}\right)_k} \\
   &= 2^{\A+\C+s/\sqrt{\tau}-1}  \frac{\Gamma(\frac{\A+\C+s/\sqrt{\tau}}{2},\frac{\A-\C-s/\sqrt{\tau}}{2})}{\Gamma(-\C -s/\sqrt{\tau} )}\\
   & \times \left(  e^{\tau(\C+s/\sqrt{\tau})^2/4} +
   \frac{(\C+s/\sqrt{\tau}+2)(\A+\C+s/\sqrt{\tau})}
   {2+ \C+s/\sqrt{\tau}-\A}e^{\tau(\C+s/\sqrt{\tau}+2)^2/4}+\dots \right).
\end{align*}
With $s<0$, we see that $e^{-s \C \sqrt{\tau}}I_\tau\to 0$ and that the leading term in the asymptotics comes from the first term in the sum $D_\tau$. We get
$$
\lim_{\tau\to\infty} L_\tau(s) =\lim_{\tau\to\infty}  e^{-s \C \sqrt{\tau}/2}\frac{D_\tau(s)}{D_\tau(0)}
=\lim_{\tau\to\infty}    \frac{ e^{\tau(\C+s/\sqrt{\tau})^2/4-s \C \sqrt{\tau}/2}}{ e^{\tau \C^2/4}} = e^{s^2/4}.$$
\end{proof}
\subsection*{Acknowledgements} We thank Guillaume Barraquand for an inspiring email  about his results and comments on the first draft of this paper, and to Yizao Wang for a discussion that led to Remark \ref{Rem:Yizao}. We thank the reviewer for the detailed report which helped us to clarify and improve the paper.
WB's research was partially supported by Simons Foundation  Award Number: 703475. AK's research was partially supported by
The Natural Sciences and Engineering Research Council of Canada.
 We are grateful to Yizao Wang for pointing out a mistake in \eqref{K2C} in the published version. This mistake has been corrected in the current arxiv version.

\end{document}